\newtheorem{Def}{Definition}
\newtheorem{Thm}[Def]{Theorem}
\newtheorem{Lem}[Def]{Lemma}
\newtheorem{Cor}[Def]{Corollary}
\newtheorem*{Problem}{Problem}
\newcommand{\jump}[1]{\ensuremath{[#1]}}
\renewcommand{\vec}[1]{\bar{\mathsf{#1}}}
\newcommand{\fz}{\xrightarrow{\stackrel{?}{=}\vec 0}}
\newcommand{\abvass}{ABVASS$_{\vec 0}$}
\newcommand{\ts}[1]{\ensuremath{\text{\textsc{#1}}}}
\newcommand{\tower}{\ts{Tower}}
\newcommand{\judge}{\mathrel{\triangleright}}
\newcommand{\tet}[2]{{#1}^{{\iddots\raisebox{1.2ex}{{\tiny{$#1$}}}}}\raisebox{.7ex}{$\}$\scriptsize {$#2\text{ times}$}}}
\tikzstyle{state}=[intermediate,minimum width=.40cm,inner sep=8.5pt]
\tikzstyle{rstate}=[state,rectangle,rounded corners=8pt,minimum height=.65cm]
\tikzstyle{every node}=[font=\small]
\tikzstyle{every edge}=[draw,>=stealth',shorten >=1pt,thick]
\tikzset{
	place/.style={
		circle,
		black,
		draw=black,
		fill=white,
		minimum size=5mm,
	},
	state/.style={
		circle,
		black,
		draw=black,
		fill=white,
		inner sep=0.9pt,
		minimum size=5mm,
	},
	intermediate/.style={
		circle,
		black,
		draw=black,
		fill=white,
		minimum size=3mm,
	},
}
\title{Tower-Complete Problems in Contraction-Free Substructural Logics} 
\author{Hiromi Tanaka}{Keio University, Tokyo, Japan}{hiromitanaka@keio.jp}{https://orcid.org/0000-0002-2941-0850}{}
\authorrunning{H. Tanaka}
\keywords{substructural logic, linear logic, full Lambek calculus, BCK-logic, computational complexity, provability, deducibility} 
\begin{document}
\maketitle
\begin{abstract}
We investigate the non-elementary computational complexity of a family of substructural logics without contraction. 
With the aid of the technique pioneered by Lazi\'c and Schmitz (2015), we show that the deducibility problem for full Lambek calculus with exchange and weakening ($\mathbf{FL}_{\mathbf{ew}}$) is not in \ts{Elementary} (i.e., the class of decision problems that can be decided in time bounded by an elementary recursive function), but is in PR (i.e., the class of decision problems that can be decided in time bounded by a primitive recursive function). 
More precisely, we show that this problem is complete for \tower, which is a non-elementary complexity class forming a part of the fast-growing complexity hierarchy introduced by Schmitz (2016). The same complexity result holds even for deducibility in BCK-logic, i.e., the implicational fragment of $\mathbf{FL}_{\mathbf{ew}}$. We furthermore show the \tower-completeness of the provability problem for elementary affine logic, which was proved to be decidable by Dal Lago and Martini (2004).
\end{abstract}

\section{Introduction}
\label{intro}
The term ``substructural logic''~\cite{GJKO07,Ono03,Res00} is an umbrella term for a family of logics that limit the use of some of the structural rules.
Substructural logics encompass a wide range of non-classical logics (e.g., intuitionistic, classical, relevance, paraconsistent and multi-valued logics), and thus are discussed in several distinct areas close to mathematical logic such as philosophy, linguistics and computer science.
In any of those research fields, one of the major topics is to settle the computational complexity of the \emph{provability problem} for a logic, i.e., the problem of whether a given formula is provable in the logic.
There are many seminal papers concerning this subject; see, e.g., \cite{BLR21,HT11,LS15,LMSS92,Pen06,Sch216,Sta79,Urq99}.

It is no surprise that a more general problem can be considered for a given logic. 
The \emph{deducibility problem} for the logic asks for a given finite set $\Phi$ of formulas and a given formula $A$ whether $A$ is provable in the logic augmented with $\Phi$ as a set of non-logical axioms.
In the setting of classical and intuitionistic logic, the notion of deducibility is reduced to provability via the deduction theorem. 
As a result, the deducibility problem for intuitionistic (resp.\ classical) propositional logic is complete for \textsc{Pspace} (resp.\ coNP) as with the provability problem.
On the other hand, since most of substructural logics do not admit the deduction theorem, there is no guarantee that these two problems are inter-reducible to each other.
For this reason, it is important to distinguish them in the framework of substructural logic.
In fact, some substructural logics have a critical ``gap'' between the complexity of provability and the complexity of deducibility.
The so-called \emph{Lambek calculus} is a striking example of such logics. 
Buszkowski~\cite{Bus82} showed that its deducibility is undecidable, but later on Pentus~\cite{Pen06} proved that its provability is NP-complete. 

\paragraph*{Main contribution}
This paper aims at clarifying the non-elementary computational complexity of deducibility in contraction-free substructural logics.
So far, such a topic has not been sufficiently explored while some earlier papers investigated a non-primitive recursive complexity of weakening-free substructural logics (i.e., relevance logics); see~\cite{LS15,Sch216,Urq99}.

\emph{Full Lambek calculus with exchange and weakening} ($\mathbf{FL}_{\mathbf{ew}}$), i.e., intuitionistic logic without the rule of contraction, is one of the most basic contraction-free logics. 
The deducibility problem for this logic is known to be decidable, thanks to the finite embeddability property of FL$_{ew}$-algebras shown by Blok and van~Alten~\cite{BvA02,BvA05}. 
However, its exact complexity remained open. 
Hence the following natural questions arise:
\begin{quote}
\begin{itemize}
    \item \emph{Is there a primitive recursive algorithm---i.e., one whose runtime is bounded by a primitive recursive function---for the deducibility problem for $\mathbf{FL}_{\mathbf{ew}}$?}
    \item \emph{If so, is there an elementary recursive algorithm—i.e., one whose runtime is bounded by a tower of exponentials of fixed height---for the problem?}
\end{itemize}
\end{quote}
We answer ``yes'' to the first question, but provide a negative answer to the second question.
To be precise, we show that the problem is actually complete for the class \textsc{Tower} (Corollary~\ref{main}). 
This class forms a part of the \emph{fast-growing complexity hierarchy} introduced by Schmitz~\cite{Sch16}, and roughly speaking, is located between \textsc{Elementary} (i.e., the class of problems decidable in elementary time) and \textsc{PR} (i.e., the class of problems that can be solved in time bounded by a primitive recursive function). 
As a consequence, it turns out that the aforementioned ``gap''  also lies between provability and deducibility in $\mathbf{FL}_{\mathbf{ew}}$; the provability problem for $\mathbf{FL}_{\mathbf{ew}}$ is \ts{Pspace}-complete, cf.\ \cite{HT11}. 
 
We stress that the same holds even when almost all the logical connectives are removed from $\mathbf{FL}_{\mathbf{ew}}$.
We also prove that the deducibility problem for \emph{BCK-logic} \cite{IT78,OK85}, i.e., the implicational fragment of $\mathbf{FL}_{\mathbf{ew}}$, is \tower-complete (Corollary~\ref{main}). 
This is in sharp contrast to the NP-completeness of provability in BCK-logic (Corollary~\ref{np}).

\paragraph*{Proof overview}
To show the \textsc{Tower}-membership of deducibility in $\mathbf{FL}_{\mathbf{ew}}$, we prove that there are reductions: 
\begin{bracketenumerate}
    \item from deducibility in $\mathbf{FL}_{\mathbf{ew}}$ to provability in a variant of intuitionistic affine logic (denoted by $\mathbf{ILZW}'$),
    \item from the provability problem for $\mathbf{ILZW}'$ to the lossy reachability problem for \emph{alternating branching vector addition systems with states} (ABVASS, for short). 
\end{bracketenumerate}
The first reduction is quite similar to the one used in the famed proof of the undecidability of propositional linear logic by Lincoln, Mitchell, Scedrov and Shankar~\cite{LMSS92}. 
The second reduction is substantially inspired by Lazi\'c and Schmitz~\cite{LS15}. 
Due to the \tower-completeness of lossy reachability in ABVASS, shown in~\cite{LS15}, we obtain the membership in \tower\ of deducibility in $\mathbf{FL}_{\mathbf{ew}}$.

In order to show the \textsc{Tower}-hardness, we introduce the notion of \emph{$\oc$-prenex implicational sequent}. 
It is a slight modification of $\oc$-prenex sequents which Terui introduced in his PhD thesis~\cite{Ter02}. 
We prove the \tower-hardness of a restricted version of the provability problem for intuitionistic affine logic, which asks whether a given $\oc$-prenex implicational sequent is provable in intuitionistic affine logic. 
We obtain the desired result by showing that this problem can be reduced into deducibility in $\mathbf{FL}_{\mathbf{ew}}$. 

\paragraph*{Provability (type inhabitation) in elementary affine logic}
As a by-product resulting from our methods, we provide the precise complexity of provability (not of deducibility) in \emph{propositional elementary affine logic}~\cite{Asp98}. 
Its name comes from the fact that it characterizes elementary recursive computation in the paradigm of proofs-as-programs; see also~\cite{DJ03,Gir98}. 
Although this logic is seemingly just an extension of BCK-logic (and $\mathbf{FL}_{\mathbf{ew}}$) by a sort of modal storage operator, it is exploited for a variety of purposes, e.g., to characterize the class P and the exponential time hierarchy~\cite{Bai15}, to formulate a consistent naive set theory with a rich computational power~\cite{Ter04}. 

In most situations, elementary affine logic is treated as a type system rather than a purely logical system. 
Accordingly, as with many other type systems, some decision problems can be considered, i.e., typability, type checking and type inhabitation (provability). 
For instance, Coppola and Martini~\cite{CM06} showed the decidability of typability in the $\{\multimap,\oc\}$-fragment of intuitionistic elementary affine logic. 

On the other hand, of particular interest to us is the provability problem for elementary affine logic.
Dal Lago and Martini~\cite{DM04} showed that provability in a classical variant of elementary affine logic is decidable.
However, there are no known upper and lower bounds for that problem.
We refine and extend the existing decidability result by showing the \textsc{Tower}-completeness of some variants of elementary affine logic (Section~\ref{towerellw}). 
It should be noted that such a non-elementary aspect of elementary affine logic does not conflict with its elementary character which comes from the proofs-as-programs correspondence.

\paragraph*{Organization of this paper}
In the next section, we review various contraction-free logics in a step-by-step manner, and define a useful translation from classical affine logic into intuitionistic affine logic.
A large portion of Sections~\ref{abvass} and~\ref{tower} is taken from~\cite[Section~3]{LS15}.
In Section~\ref{abvass}, we summarize some basic notions involved in ABVASSs. 
Section~\ref{tower} is devoted to a short discussion about the existing complexity results which are crucial in proving the main claims in Sections~\ref{encodingoflogic} and \ref{encodingofbvass}.
We prove the main results in Sections~\ref{encodingoflogic} and \ref{encodingofbvass}. 
In Section~\ref{conclusion}, we conclude the paper with some remarks on the complexity status of other substructural logics.
The appendices include some proofs omitted in the main text.

\section{Contraction-free substructural logics}\label{sequent}
\subsection{Sequent calculi for contraction-free substructural logics}\label{sequentcalculus}

For convenience, we start with the formal definition of a sequent calculus for \emph{intuitionistic affine logic with bottom}, denoted by $\mathbf{ILZW}$. 
It is merely the extension of Troelstra's $\mathbf{ILZ}$ by the rule of left-weakening, cf.\ \cite{LS15,Tro92}.
The \emph{language} $\mathcal{L}$ of $\mathbf{ILZW}$ contains logical connectives $\with,\oplus,\otimes,\multimap$ of arity 2, $\oc$ of arity 1, and $\mathbf{1},\top,\bot,\mathbf{0}$ of arity 0. 
We fix a countable set of propositional variables $V=\{p,q,r,\ldots\}$. 
An \emph{intuitionistic $\mathcal{L}$-formula} is built from propositional variables using connectives in $\mathcal{L}$. 
For brevity, parentheses in formulas are omitted when confusion is unlikely. 
Throughout this paper, metavariables $A,B,C,\ldots$ range over formulas and $\Gamma,\Delta,\Sigma,\ldots$ over finite multisets of formulas. 
By abuse of notation, we simply write $A$ for the singleton of a formula $A$.
The multiset sum of $\Gamma$ and $\Delta$ is denoted by $\Gamma,\Delta$. We write $\oc \Gamma$ for the multiset obtained by prefixing each formula in $\Gamma$ with exactly one $\oc$. 
An \emph{intuitionistic $\mathcal{L}$-sequent} is an expression of the form $\Gamma \vdash \Pi$, where $\Gamma$ is a finite multiset of intuitionistic $\mathcal{L}$-formulas, and
$\Pi$ is a stoup, i.e., either an intuitionistic $\mathcal{L}$-formula or the empty multiset $\varepsilon$. 
We always denote an intuitionistic $\mathcal{L}$-sequent of the form $\varepsilon \vdash \Pi$ (resp.\ $\Gamma \vdash \varepsilon$) by $\vdash\Pi$ (resp.\ $\Gamma\vdash$). 
The sequent calculus for $\mathbf{ILZW}$ consists of the inference rules depicted in Figure~\ref{inf1}. 
\begin{figure}
\begin{center}
\begin{tabular}{ccc}
\AxiomC{}
\RightLabel{(Init)}
\UnaryInfC{$A \vdash A$}
\DisplayProof
&
\AxiomC{}
\RightLabel{($\mathbf{1}$R)}
\UnaryInfC{$\vdash \mathbf{1}$}
\DisplayProof
&
\AxiomC{}
\RightLabel{($\bot$L)}
\UnaryInfC{$\bot \vdash$}
\DisplayProof
\\[1.5em]
\AxiomC{}
\RightLabel{($\top$R)}
\UnaryInfC{$\Gamma \vdash \top$}
\DisplayProof
&
\AxiomC{}
\RightLabel{($\mathbf{0}$L)}
\UnaryInfC{$\mathbf{0},\Gamma \vdash \Pi$}
\DisplayProof
&
\AxiomC{$\Gamma \vdash A$}
\AxiomC{$A,\Delta \vdash \Pi$}
\RightLabel{(Cut)}
\BinaryInfC{$\Gamma,\Delta \vdash \Pi$}
\DisplayProof
\\[1.5em]
\AxiomC{$\Gamma \vdash \Pi$}
\RightLabel{($\mathbf{1}$L)}
\UnaryInfC{$\mathbf{1},\Gamma \vdash\Pi$}
\DisplayProof
&
\AxiomC{$\Gamma \vdash$}
\RightLabel{($\bot$R)}
\UnaryInfC{$\Gamma \vdash \bot$}
\DisplayProof
&
\AxiomC{$\Gamma \vdash A$}
\AxiomC{$B,\Delta \vdash \Pi$}
\RightLabel{($\multimap$L)}
\BinaryInfC{$A\multimap B,\Gamma,\Delta \vdash \Pi$}
\DisplayProof
\\[1.5em]
\AxiomC{$A,\Gamma \vdash B$}
\RightLabel{($\multimap$R)}
\UnaryInfC{$\Gamma \vdash A\multimap B$}
\DisplayProof
&
\AxiomC{$A,B,\Gamma \vdash \Pi$}
\RightLabel{($\otimes$L)}
\UnaryInfC{$A\otimes B,\Gamma \vdash \Pi$}
\DisplayProof
&
\AxiomC{$\Gamma\vdash A$}
\AxiomC{$\Delta \vdash B$}
\RightLabel{($\otimes$R)}
\BinaryInfC{$\Gamma,\Delta \vdash A \otimes B$}
\DisplayProof
\\[1.5em]
\AxiomC{$A,\Gamma\vdash\Pi$}
\RightLabel{($\with$L1)}
\UnaryInfC{$A\with B,\Gamma\vdash\Pi$}
\DisplayProof
&
\AxiomC{$B,\Gamma\vdash\Pi$}
\RightLabel{($\with$L2)}
\UnaryInfC{$A\with B,\Gamma\vdash\Pi$}
\DisplayProof
&
\AxiomC{$\Gamma\vdash A$}
\AxiomC{$\Gamma\vdash B$}
\RightLabel{($\with$R)}
\BinaryInfC{$\Gamma \vdash A \with B$}
\DisplayProof
\\[1.5em]
\AxiomC{$\Gamma\vdash A$}
\RightLabel{($\oplus$R1)}
\UnaryInfC{$\Gamma\vdash A\oplus B$}
\DisplayProof
&
\AxiomC{$\Gamma\vdash B$}
\RightLabel{($\oplus$R2)}
\UnaryInfC{$\Gamma\vdash A\oplus B$}
\DisplayProof
&
\AxiomC{$A,\Gamma\vdash \Pi$}
\AxiomC{$B,\Gamma\vdash \Pi$}
\RightLabel{($\oplus$L)}
\BinaryInfC{$A\oplus B,\Gamma \vdash \Pi$}
\DisplayProof
\\[1.5em]
\multicolumn{3}{c}{
\begin{tabular}{c@{\hspace{2em}}c@{\hspace{2em}}c@{\hspace{2em}}c}
\AxiomC{$A,\Gamma\vdash\Pi$}
\RightLabel{($\oc$D)}
\UnaryInfC{$\oc A,\Gamma\vdash\Pi$}
\DisplayProof
&
\AxiomC{$\oc\Gamma\vdash A$}
\RightLabel{($\oc$P)}
\UnaryInfC{$\oc\Gamma\vdash \oc A$}
\DisplayProof
&
\AxiomC{$\Gamma\vdash\Pi$}
\RightLabel{(W)}
\UnaryInfC{$A,\Gamma\vdash\Pi$}
\DisplayProof
&
\AxiomC{$\oc A,\oc A,\Gamma\vdash\Pi$}
\RightLabel{($\oc$C)}
\UnaryInfC{$\oc A,\Gamma\vdash\Pi$}
\DisplayProof
\end{tabular}
}
\end{tabular}
\end{center}
\caption{Inference rules of $\mathbf{ILZW}$; $A,B$ range over intuitionistic $\mathcal{L}$-formulas and $\Gamma,\Delta$ range over finite multisets of intuitionistic $\mathcal{L}$-formulas, and $\Pi$ ranges over stoups.}
\label{inf1}
\end{figure}
A \emph{proof} of a sequent $\Gamma \vdash \Pi$ in $\mathbf{ILZW}$ is defined in the usual manner. 
We furthermore define another variant of intuitionistic affine logic by adding the following \emph{right-weakening} rule (W') to $\mathbf{ILZW}$:
\begin{center}
    \AxiomC{$\Gamma \vdash$}
    \RightLabel{(W')}
    \UnaryInfC{$\Gamma \vdash A$}
    \DisplayProof
\end{center}
The resulting system is denoted by $\mathbf{ILZW}'$.

Let $\mathcal{K}$ be a non-empty subset of $\mathcal{L}$. 
An \emph{intuitionistic $\mathcal{K}$-formula} is an intuitionistic formula containing only logical connectives from $\mathcal{K}$. 
An \emph{intuitionistic $\mathcal{K}$-sequent} is an intuitionistic sequent consisting only of intuitionistic $\mathcal{K}$-formulas. 
The \emph{$\mathcal{K}$-fragment} of $\mathbf{ILZW}$ (resp.\ $\mathbf{ILZW'}$) is the sequent calculus obtained from $\mathbf{ILZW}$ (resp.\ $\mathbf{ILZW}'$) by dropping all
the inference rules concerning connectives not in $\mathcal{K}$. 

Each of the logical systems within the scope of this paper is a fragment of $\mathbf{ILZW}$ or $\mathbf{ILZW}'$. 
We list them below:
\begin{itemize}
	\item  \emph{Full Lambek calculus with exchange and left-weakening} ($\mathbf{FL}_{\mathbf{ei}}$) is the $\{\otimes,\multimap,\with,\oplus, \mathbf{1},\bot\}$-fragment of $\mathbf{ILZW}$.
	\item \emph{Full Lambek calculus with exchange and weakening} ($\mathbf{FL}_{\mathbf{ew}}$) is the $\{\otimes,\multimap,\with,\oplus, \mathbf{1},\bot\}$-fragment of $\mathbf{ILZW}'$.
	\item The \emph{positive fragment} ($\mathbf{FL}^+_{\mathbf{ei}}$) of $\mathbf{FL}_{\mathbf{ei}}$ is the $\{\otimes,\multimap,\with,\oplus, \mathbf{1}\}$-fragment of $\mathbf{ILZW}$.
	\item \emph{BCK logic} ($\mathbf{BCK}$) is the implicational fragment (i.e., $\{\multimap\}$-fragment) of $\mathbf{ILZW}$.
\end{itemize}
Unfortunately, our notation is considerably different from the notation widely employed in the substructural logic community; we refer the reader to \cite[Table 2.1]{GJKO07} for the notational correspondence between linear logic and substructural logic.

We next formulate right-hand sided sequent calculi for \emph{classical affine logic} ($\mathbf{LLW}$), cf.\ \cite{Gir95,Laf97}. 
For our purpose here, we again employ the countable set $V$ of propositional variables, and introduce their duals $V^{\bot}=\{p^{\bot}, q^{\bot}, r^{\bot},\ldots\}$. 
Elements in $V \cup V^{\bot}$ are often referred to as \emph{literals}. 
The language $\mathcal{L}_C$ consists of binary operation symbols $\with,\oplus,\otimes,\parr$, and constants $\mathbf{1},\top,\bot,\mathbf{0}$, and unary operation symbols $\oc,\wn$. 
Given a sublanguage $\mathcal{K}$ of $\mathcal{L}_C$, \emph{Classical $\mathcal{K}$-formulas} are built up from literals using operation symbols in $\mathcal{K}$.
For each classical $\mathcal{L}_C$-formula $A$, we inductively define the formula $A^{\bot}$ by the de Morgan duality; $(p^{\bot})^{\bot}=p, (A \with B)^{\bot}=A^{\bot} \oplus B^{\bot}, (A \oplus B)^{\bot}=A^{\bot} \with B^{\bot}, (A \otimes B)^{\bot}=A^{\bot} \parr B^{\bot},(A \parr B)^{\bot}=A^{\bot} \otimes B^{\bot},(\oc A)^{\bot}=\wn A^{\bot},(\wn A)^{\bot}=\oc A^{\bot},\mathbf{1}^{\bot}=\bot,\bot^{\bot}=\mathbf{1},\top^{\bot}=\mathbf{0},\mathbf{0}^{\bot}=\top$. 
It is easy to see that $A=A^{\bot\bot}$ for any classical $\mathcal{L}_C$-formula $A$. 
A \emph{classical $\mathcal{K}$-sequent} is an expression of the form $\vdash \Gamma$, where $\Gamma$ is a finite multiset of classical $\mathcal{K}$-formulas. 
The inference rules of $\mathbf{LLW}$ are presented in Figure~\ref{inf2}. 
\begin{figure}
\begin{center}
\begin{tabular}{ccc}
\multicolumn{3}{c}{
\begin{tabular}{c@{\hspace{2em}}c@{\hspace{2em}}c@{\hspace{2em}}c}
\AxiomC{}
\RightLabel{(Init)}
\UnaryInfC{$\vdash A,A^{\bot}$}
\DisplayProof
&
\AxiomC{}
\RightLabel{($\mathbf{1}$)}
\UnaryInfC{$\vdash \mathbf{1}$}
\DisplayProof
&
\AxiomC{}
\RightLabel{($\top$)}
\UnaryInfC{$\vdash\Gamma,\top$}
\DisplayProof
&
\AxiomC{$\vdash\Gamma,A$}
\AxiomC{$\vdash A^{\bot},\Delta$}
\RightLabel{(Cut)}
\BinaryInfC{$\vdash\Gamma,\Delta$}
\DisplayProof
\end{tabular}
}
\\[1.5em]
\AxiomC{$\vdash\Gamma$}
\RightLabel{($\bot$)}
\UnaryInfC{$\vdash\Gamma,\bot$}
\DisplayProof
&
\AxiomC{$\vdash\Gamma,A$}
\AxiomC{$\vdash\Delta,B$}
\RightLabel{($\otimes$)}
\BinaryInfC{$\vdash\Gamma,\Delta,A\otimes B$}
\DisplayProof
&
\AxiomC{$\vdash\Gamma,A,B$}
\RightLabel{($\parr$)}
\UnaryInfC{$\vdash\Gamma,A\parr B$}
\DisplayProof
\\[1.5em]
\AxiomC{$\vdash\Gamma,A$}
\AxiomC{$\vdash\Gamma,B$}
\RightLabel{($\with$)}
\BinaryInfC{$\vdash\Gamma,A\with B$}
\DisplayProof
&
\AxiomC{$\vdash\Gamma,A$}
\RightLabel{($\oplus$1)}
\UnaryInfC{$\vdash\Gamma,A\oplus B$}
\DisplayProof
&
\AxiomC{$\vdash\Gamma,B$}
\RightLabel{($\oplus$2)}
\UnaryInfC{$\vdash\Gamma,A\oplus B$}
\DisplayProof
\\[1.5em]
\multicolumn{3}{c}{
\begin{tabular}{c@{\hspace{2em}}c@{\hspace{2em}}c@{\hspace{2em}}c}
\AxiomC{$\vdash\Gamma,A$}
\RightLabel{($\wn$)}
\UnaryInfC{$\vdash\Gamma,\wn A$}
\DisplayProof
&
\AxiomC{$\vdash\wn\Gamma,A$}
\RightLabel{($\oc$)}
\UnaryInfC{$\vdash\wn\Gamma,\oc A$}
\DisplayProof
&
\AxiomC{$\vdash\Gamma$}
\RightLabel{(W)}
\UnaryInfC{$\vdash\Gamma,A$}
\DisplayProof
&
\AxiomC{$\vdash\Gamma,\wn A,\wn A$}
\RightLabel{($\wn$C)}
\UnaryInfC{$\vdash\Gamma,\wn A$}
\DisplayProof
\end{tabular}
}
\end{tabular}
\end{center}
\caption{Inference rules of $\mathbf{LLW}$; metavariables $A,B$ range over classical $\mathcal{L}_C$-formulas, and $\Gamma,\Delta$ over finite multisets of $\mathcal{L}_C$-formulas.}
\label{inf2}
\end{figure}

As with the intuitionistic sequent systems discussed earlier, various fragments of $\mathbf{LLW}$ can be defined in the usual manner. 
Among such fragments, of importance to us is the $\{\otimes,\parr,\with,\oplus,\mathbf{1},\bot\}$-fragment of $\mathbf{LLW}$, called \emph{involutive full Lambek calculus with
exchange and weakening} ($\mathbf{InFL}_{\mathbf{ew}}$). 

In \cite{Gir98} Girard proposed a logic which captures elementary recursive computation, called \emph{elementary linear logic} ($\mathbf{ELL}$).    
We review here some affine variants of $\mathbf{ELL}$, following \cite{Bai15,CM06,DM04}. 
\emph{Intuitionistic elementary affine logic with bottom} ($\mathbf{IEZW}$) is obtained from $\mathbf{ILZW}$ by dropping the rules of ($\oc$D) and ($\oc$P) and by adding the following functorial promotion rule:
\begin{prooftree}
    \AxiomC{$\Gamma \vdash A$}
    \RightLabel{($\oc$F)}
    \UnaryInfC{$\oc\Gamma \vdash \oc A$}
\end{prooftree}
Similarly, \emph{classical elementary affine logic} $(\mathbf{ELLW})$ is obtained from $\mathbf{LLW}$ by deleting the rules of $(\wn)$ and $(\oc)$ and by adding the following rule: 
\begin{prooftree}
    \AxiomC{$\vdash\Gamma,A$}
    \RightLabel{(F)}
    \UnaryInfC{$\vdash\wn\Gamma,\oc A$}
\end{prooftree}
It is easy to see that $\mathbf{IEZW}$ (resp.\ $\mathbf{ELLW}$) is a subsystem of $\mathbf{ILZW}$ (resp.\ $\mathbf{LLW}$), i.e., every sequent that is provable in $\mathbf{IEZW}$ (resp.\ $\mathbf{ELLW}$) is provable in $\mathbf{ILZW}$ (resp.\ $\mathbf{LLW}$).  
Henceforce, we write $\mathcal{L}^+$ for the language $\mathcal{L} \setminus \{\bot\}$. 
The notation $\mathbf{ILLW}$ (resp.\ $\mathbf{IELW}$) is used to denote the $\mathcal{L}^+$-fragment of $\mathbf{ILZW}$ (resp.\ $\mathbf{IEZW}$).
Every sequent in such $\bot$-free logical systems is of the form $\Gamma \vdash A$. 

Let $\mathbf{L}$ be one of the sequent calculi described so far and $\Phi$ a set of formulas in $\mathbf{L}$. 
We write $\mathbf{L}[\Phi]$ for the sequent calculus obtained from $\mathbf{L}$ by adding $\vdash B$ as an initial sequent for every $B\in \Phi$.  
In this paper, we consider the following two types of decision problems for $\mathbf{L}$.

\begin{Problem}[Provability in $\mathbf{L}$]\hfill
\begin{description}
\item[Instance:] A formula $F$ in $\mathbf{L}$.
\item[Question:] Is the sequent $\vdash F$ provable in $\mathbf{L}$?
\end{description}
\end{Problem}

\begin{Problem}[Deducibility in $\mathbf{L}$]\hfill
\begin{description}
\item[Instance:] A finite set $\Phi \cup \{F\}$ of formulas in $\mathbf{L}$.
\item[Question:] Is the sequent $\vdash F$ provable in $\mathbf{L}[\Phi]$?
\end{description}
\end{Problem}

Our argument depends heavily on the following cut-elimination theorem, as we will see in the remaining sections:
\begin{Thm}[cf.\ \cite{DM04,Laf97,OT99}]\label{cut}
	The sequent calculi for $\mathbf{ILLW}$, $\mathbf{ILZW}$, $\mathbf{ILZW}'$, $\mathbf{LLW}$, $\mathbf{IELW}$, $\mathbf{IEZW}$, and $\mathbf{ELLW}$ all enjoy cut-elimination.
\end{Thm}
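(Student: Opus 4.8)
The systems listed split into an intuitionistic family ($\mathbf{ILLW}$, $\mathbf{ILZW}$, $\mathbf{ILZW}'$, $\mathbf{IELW}$, $\mathbf{IEZW}$) and a right-handed classical family ($\mathbf{LLW}$, $\mathbf{ELLW}$), and the plan is to cut the list down to a few base cases and then run the standard Gentzen-style reduction. First I would observe that cut-elimination is inherited by $\mathcal{K}$-fragments: a cut-free proof of a $\mathcal{K}$-sequent has the subformula property, hence mentions only connectives in $\mathcal{K}$, hence is already a proof in the $\mathcal{K}$-fragment; since $\mathbf{ILLW}$ (resp.\ $\mathbf{IELW}$) is by definition the $\mathcal{L}^+$-fragment of $\mathbf{ILZW}$ (resp.\ $\mathbf{IEZW}$), it suffices to prove cut-elimination for $\mathbf{ILZW}$, $\mathbf{ILZW}'$, $\mathbf{IEZW}$, $\mathbf{LLW}$ and $\mathbf{ELLW}$. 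The elementary variants $\mathbf{IEZW}$ and $\mathbf{ELLW}$ are covered by Dal~Lago and Martini~\cite{DM04} (cf.\ also \cite{OT99}), and $\mathbf{LLW}$ is classical affine logic, whose cut-elimination is folklore and is included in Lafont~\cite{Laf97}; so the only points that really need checking are that the procedure survives the affine rules and, for $\mathbf{ILZW}'$, the extra right-weakening rule (W$'$).

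For $\mathbf{ILZW}$ and $\mathbf{ILZW}'$ I would argue by the usual double induction, with outer measure the degree (number of connective occurrences) of the cut formula and inner measure the sum of the heights of the two immediate subderivations over an uppermost cut. The case analysis is the familiar one: (i) if one premise is an axiom $A\vdash A$, or the cut formula is a constant introduced by a zero-premise rule such as ($\top$R), ($\mathbf{0}$L), ($\bot$L), ($\mathbf{1}$R), or is introduced by (W), the cut vanishes outright — for the weakening cases one simply weakens the surviving premise's context, and this is where the affine rules help rather than hinder; the same trivial replacement disposes of the new (W$'$) case, padding the context with (W) and, if the succedent is a formula, re-applying (W$'$). (ii) If the cut formula is not principal in one premise, the cut is permuted above the last rule of that premise, strictly decreasing the inner measure. (iii) If the cut formula is principal in both premises, one applies the key reduction for its outermost connective, replacing the cut by one or two cuts on immediate subformulas of strictly smaller degree — e.g.\ a cut on $A\multimap B$ becomes successive cuts on $A$ and on $B$, and similarly for $A\otimes B$, $A\with B$, $A\oplus B$. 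A topmost-cut argument then removes the cuts one at a time.

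The delicate case, as always, is the exponential. When the cut formula is $\oc A$, introduced on the right by ($\oc$P) (or by ($\oc$F) in the elementary systems) and on the left by ($\oc$D) or ($\oc$C), the dereliction subcase reduces at once to a cut on $A$, but the contraction subcase ($\oc$C) duplicates the entire left derivation of $\oc A$, so a naive induction on height fails. The standard remedy is to strengthen the statement to a ``multicut'' rule that cuts several copies of $\oc A$ against a single sequent $\oc A,\dots,\oc A,\Delta\vdash\Pi$ simultaneously and to reorganize the induction around the number of copies together with the degree; the promotion premise $\oc\Sigma\vdash A$ (or $\Sigma\vdash A$ for ($\oc$F)) then feeds all copies at once and the duplicated $\oc\Sigma$ contexts are re-contracted afterwards. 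I expect this exponential/contraction step to be the main obstacle, and it is precisely where the affine, elementary and classical variants must be inspected separately — though the elementary calculi are in fact milder here, since functorial promotion introduces no box nesting and dereliction is absent, which is why \cite{DM04} pushes it through cleanly. For $\mathbf{LLW}$ and $\mathbf{ELLW}$ the whole argument is the mirror image on right-handed sequents, with $\wn$ and ($\wn$C) playing the role of $\oc$ and ($\oc$C).
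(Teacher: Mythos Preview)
Your proposal is sound and follows the standard route; in fact it is considerably more detailed than what the paper itself offers. The paper does not give a proof of this theorem at all: it cites \cite{DM04,Laf97,OT99} for the established cases, remarks that cut-elimination for $\mathbf{ILZW}'$ ``has not been settled'' in the literature but ``can be shown without great difficulty, using a proof-theoretic or algebraic manner,'' points to \cite{LMSS92,Oka02,Tro92} for technical details, and then explicitly omits the proof. So there is nothing to compare against beyond the bare assertion that the argument is routine.

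Your outline---inheritance by fragments via the subformula property, reduction to the five base systems, the double induction on degree and height, and the multicut/mix device for the $(\oc\mathrm{C})$/$(\wn\mathrm{C})$ case---is exactly the ``proof-theoretic manner'' the paper gestures at, and your observation that the (W$'$) case is harmless (absorb by weakening the surviving premise) is the one point specific to $\mathbf{ILZW}'$ that needed saying. One small caveat: your claim that the elementary calculi are ``milder'' because ``dereliction is absent'' is slightly overstated for the cut-elimination argument itself, since in $\mathbf{IEZW}$ a cut on $\oc A$ with right premise introduced by $(\oc\mathrm{F})$ and left premise by $(\oc\mathrm{C})$ still requires the multicut treatment; what is genuinely simpler is that the $(\oc\mathrm{F})$/$(\oc\mathrm{F})$ principal case reduces cleanly to a cut on $A$ followed by a single $(\oc\mathrm{F})$, without the box-nesting bookkeeping of full promotion.
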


As far as we know, for instance, the cut-elimination for $\mathbf{ILZW}'$ has not been settled. 
The reader can however show this without great difficulty, using a proof-theoretic or algebraic manner; see also \cite{LMSS92,Oka02,Tro92} for technical details on cut-elimination in linear logic. 
We thus omit the proof in this paper. 

Of course, the cut-elimination theorem also holds for various fragments of the systems stated in Theorem~\ref{cut}, e.g., $\mathbf{BCK}$, $\mathbf{FL}_{\mathbf{ei}}$, the $\{\multimap,\oc\}$-fragment of $\mathbf{ILZW}$.

\subsection{Translation from classical affine logic to intuitionistic affine logic}\label{simple}
\label{translation}
In this subsection, we present an efficient (i.e., polynomial-time) translation from $\mathbf{LLW}$ (resp.\ $\mathbf{ELLW}$) into $\mathbf{ILLW}$ (resp.\ $\mathbf{IELW}$).
It is a modification of Laurent's \emph{parametric negative translation} from classical linear logic to intuitionistic linear logic; see \cite[Definition~2.2]{Lau18}. 

Let us fix an intuitionistic $\mathcal{L}$-formula $F$. 
Given a classical $\mathcal{L}_C$-formula $A$, we inductively define the intuitionistic $\mathcal{L}$-formula $A^{[F]}$ as follows:
\begin{align*}
p^{\jump{F}}&:=\neg_F p & (p^{\bot})^{\jump{F}}&:=p\\
\mathbf{1}^{\jump{F}}&:=\neg_F \mathbf{1} & \bot^{\jump{F}}&:=\mathbf{1}\\
\top^{\jump{F}}&:=\mathbf{0} & \mathbf{0}^{\jump{F}}&:=\neg_F \mathbf{0}\\
(B \otimes C)^{\jump{F}}&:=\neg_F B^{\jump{F}} \multimap C^{\jump{F}} & (B \parr C)^{\jump{F}}&:=\neg_F(B^{\jump{F}}\multimap \neg_F C^{\jump{F}}) \\
(B \with C)^{\jump{F}}&:=B^{\jump{F}} \oplus C^{\jump{F}} & (B \oplus C)^{\jump{F}}&:=\neg_F(\neg_F B^{\jump{F}}\oplus \neg_F C^{\jump{F}}) \\
(\oc B)^{\jump{F}}&:=\neg_F \oc \neg_F B^{\jump{F}} & (\wn B)^{\jump{F}}&:=\oc B^{\jump{F}}
\end{align*}
where $\neg_F A$ is an abbreviation for $A \multimap F$. 
We can show the following theorem; see Appendix~\ref{detailstranslation} for a proof.

\begin{restatable}{Thm}{translation}\label{illtoll5}
	Let $\vdash \Gamma$ be a classical $\mathcal{L}_C$-sequent and $x$ a fresh propositional variable not occurring in $\Gamma$.
	\begin{bracketenumerate}
		\item $\vdash \Gamma$ is provable in $\mathbf{LLW}$ if and only if $\Gamma^{[x]} \vdash x$ is provable in $\mathbf{ILLW}$.
		\item $\vdash \Gamma$ is provable in $\mathbf{ELLW}$ if and only if $\Gamma^{[x]} \vdash x$ is provable in $\mathbf{IELW}$.
	\end{bracketenumerate}
\end{restatable}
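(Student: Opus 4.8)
The plan is to prove both directions of each biconditional by induction on cut-free proofs, exploiting Theorem~\ref{cut} so that we only need to analyse the shapes of cut-free derivations. For the forward directions, I would proceed by induction on a cut-free proof of $\vdash\Gamma$ in $\mathbf{LLW}$ (resp.\ $\mathbf{ELLW}$) and show that $\Gamma^{[x]}\vdash x$ is derivable in $\mathbf{ILLW}$ (resp.\ $\mathbf{IELW}$), using cut freely in the intuitionistic system. Since the translation is essentially a negative (continuation-style) translation, each right-rule of the classical calculus acting on a formula $A$ in $\Gamma$ corresponds on the intuitionistic side to a left-rule acting on $A^{[x]}$, possibly preceded by a few administrative steps that unfold the abbreviation $\neg_x(-)=(-)\multimap x$ and apply $(\multimap\text{L})$, $(\multimap\text{R})$, and weakening. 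The key technical device is a collection of \emph{derivability lemmas} of the form: for every classical formula $A$, the sequents $A^{[x]},(A^{\bot})^{[x]}\vdash x$ and $\vdash$ certain $\eta$-expansion sequents are provable in $\mathbf{ILLW}$; together with the double-negation-style fact that $\neg_x\neg_x B^{[x]}\vdash B^{[x]}$ fails in general but $B^{[x]}\vdash\neg_x\neg_x B^{[x]}$ holds, which is the only direction we need. These lemmas let us simulate $(\text{Init})$, and they are what make the $(\parr)$, $(\oplus)$, and $(\oc)$ cases go through, since those are exactly the clauses of the translation that insert an extra $\neg_x$.

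For the $(\oc)$/functorial-promotion case I would be careful about the two settings. In $\mathbf{LLW}$ the rule is $\vdash\wn\Gamma,\oc A$ from $\vdash\wn\Gamma,A$, and under the translation $(\wn B)^{[x]}=\oc B^{[x]}$ while $(\oc A)^{[x]}=\neg_x\oc\neg_x A^{[x]}$; so the induction hypothesis gives $\oc\Gamma^{[x]},A^{[x]}\vdash x$, i.e.\ $\oc\Gamma^{[x]}\vdash\neg_x A^{[x]}$ using $(\multimap\text{R})$ — but then I need $(\oc\text{P})$ applied to a context of $\oc$-formulas to get $\oc\Gamma^{[x]}\vdash\oc\neg_x A^{[x]}$, and finally $(\multimap\text{L})$ against the displayed $\neg_x\oc\neg_x A^{[x]}$ on the left. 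In the elementary systems $(\oc\text{D})$ and $(\oc\text{P})$ are replaced by $(\oc\text{F})$, so I instead use $(\oc\text{F})$ directly on $\oc\Gamma^{[x]}\vdash\neg_x A^{[x]}$; I should check that no step in the simulation of the other rules secretly used dereliction or the general promotion of $\mathbf{ILZW}$, which is the only way the $\mathbf{ELLW}$/$\mathbf{IELW}$ statement could differ from the $\mathbf{LLW}$/$\mathbf{ILLW}$ one. The contraction rule $(\wn\text{C})$ translates to ordinary $(\oc\text{C})$ on $\oc B^{[x]}$, and weakening $(\text{W})$ translates to $(\text{W})$, so those cases are immediate. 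The constants and $(\top)$, $(\mathbf{1})$, $(\bot)$, $(\&)$ cases are routine unfoldings.

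For the converse directions, the cleanest route is \emph{not} to invert the intuitionistic proof directly but to compose with the de Morgan involution already in hand. Concretely: if $\Gamma^{[x]}\vdash x$ is provable in $\mathbf{ILLW}$, then (reading $\mathbf{ILLW}$ as a fragment of $\mathbf{LLW}$ via the standard two-sided-to-one-sided embedding $\Delta\vdash C\mapsto\;\vdash\Delta^{\bot},C$, where $\multimap$ becomes $\parr$ with a dualized left argument, $\oplus$ stays $\oplus$, etc.) we get a proof in $\mathbf{LLW}$ of $\vdash(\Gamma^{[x]})^{\bot},x$. Then I use a second lemma stating that for every classical formula $A$ the formula $(A^{[x]})^{\bot}$ is provably equivalent in $\mathbf{LLW}$, modulo the literal $x$ and its dual, to something from which $A$ can be recovered — more honestly, I would show by induction on $A$ that $\vdash A, (A^{[x]})^{\bot}\{x:=\mathbf{1}\}$ or the corresponding cut-elimination-friendly statement holds, and then instantiate $x$ by $\bot$ (equivalently cut against $\vdash\mathbf{1}$ after substituting), collapsing all the $\neg_x=\multimap F$ wrappers; since $x$ is fresh this substitution is legitimate and, because $\mathbf{LLW}$ has weakening, the residual occurrences of $x$ or $x^\bot$ can be discarded. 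This recovers $\vdash\Gamma$ in $\mathbf{LLW}$, and the identical argument with $(\oc)$ replaced by $(\text{F})$ gives the $\mathbf{ELLW}$ case.

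The main obstacle I anticipate is precisely the bookkeeping in the converse direction: getting the substitution $x:=\bot$ (or $x:=\mathbf{1}^\bot$) to interact correctly with the nested negations in clauses like $(B\parr C)^{[x]}=\neg_x(B^{[x]}\multimap\neg_x C^{[x]})$ and $(\oplus B\oplus C)^{[x]}=\neg_x(\neg_x B^{[x]}\oplus\neg_x C^{[x]})$, where two or three layers of $\multimap x$ must cancel against one another via double-negation elimination — which is available in $\mathbf{LLW}$ only because it is classical, so this step genuinely uses involutivity and does not dualize back into the intuitionistic systems. A secondary, milder obstacle is making the induction hypotheses strong enough to handle open contexts uniformly (one typically strengthens ``$\Gamma^{[x]}\vdash x$'' to a statement quantifying over which formulas of $\Gamma$ are ``active''), but this is standard for negative translations, and I would model the precise formulation and the equivalence lemmas on Laurent~\cite[Definition~2.2 and the accompanying soundness argument]{Lau18}, adapting them to the presence of weakening and to the elementary promotion rule. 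The full details are deferred to Appendix~\ref{detailstranslation}.
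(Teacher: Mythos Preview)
Your proposal is correct and follows essentially the same route as the paper: the forward direction is exactly Lemma~\ref{illtoll1} (induction on cut-free proofs, with the base case handled by the derivability lemma $A^{[x]},(A^{\bot})^{[x]}\vdash x$, which is the paper's Lemma~\ref{ax}), and your converse is the same strategy of embedding $\mathbf{ILLW}$ into $\mathbf{LLW}$, proving a formula-wise lemma relating $\underline{A^{[x]}}$ to $A$, cutting, and then substituting $x:=\bot$. The paper's execution of the converse is slightly cleaner than what you sketch: instead of substituting first and then arguing about residual occurrences, it proves the auxiliary lemma $\vdash\underline{A^{[x]}},A$ in $\mathbf{LLW}$ augmented with non-logical axioms $\Phi_x=\{x^{\bot},\,x\parr\mathbf{1}\}$ (Lemma~\ref{illtoll3}), cuts to obtain $\vdash\Gamma$ in $\mathbf{LLW}[\Phi_x]$, and only then substitutes $x:=\bot$, which turns the non-logical axioms into theorems of $\mathbf{LLW}$; this packaging avoids the bookkeeping you flag as the main obstacle.
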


This translation is convenient to show the complexity of the contraction-free logics that we deal with, e.g., the NP-completeness of the provability problem for $\mathbf{BCK}$.

\begin{Cor}\label{np}
	The provability problem for $\mathbf{BCK}$ is \textsc{NP}-complete.
\end{Cor}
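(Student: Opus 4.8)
The plan is to prove the two matching bounds: membership of $\mathbf{BCK}$-provability in \textsc{NP}, and \textsc{NP}-hardness, with the translation of Theorem~\ref{illtoll5} doing the work on the hardness side.

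For \emph{membership in \textsc{NP}}, I would show that every provable $\mathbf{BCK}$-sequent $\vdash F$ admits a cut-free proof of size polynomial in $|F|$, so that a nondeterministic machine can guess such a proof and verify it in polynomial time. By Theorem~\ref{cut} and the remark after it, $\mathbf{BCK}$ enjoys cut-elimination, so only cut-free proofs matter, and such proofs use only (Init), ($\multimap$L), ($\multimap$R) and (W). The first point is that the endsequent $\vdash F$ has an empty left-hand side: if a formula $C$ is introduced by an instance of (W), then, tracing that occurrence downwards, it stays in the left context of every sequent below it until it is absorbed as an immediate subformula of the principal formula of some ($\multimap$L) or ($\multimap$R); the formulas produced along this chain strictly grow as one descends, and, the left context of the root $\vdash F$ being empty, the chain must end at the stoup $F$, so $C$ is a subformula of $F$. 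By induction, every formula in a cut-free proof of $\vdash F$ is a subformula of $F$. Next, give a sequent the weight $\mu(\Gamma\vdash\Pi)=\sum_{A\in\Gamma}|A|+|\Pi|$, where $|A|$ is the number of subformula occurrences of $A$ and $|\varepsilon|=0$. One checks that the weight of the conclusion of ($\multimap$R) is that of its premise plus $1$, that the weight of the conclusion of ($\multimap$L) is the sum of the weights of its two premises plus $1$, that $\mu$ strictly decreases upwards across (W), and that each (Init)-leaf has weight at least $2$; an easy induction then gives that a cut-free proof of $\vdash F$ has at most $\mu(\vdash F)=|F|$ nodes and that every sequent in it has weight at most $|F|$, hence polynomial size. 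Thus cut-free $\mathbf{BCK}$-proofs are polynomially bounded, and the problem lies in \textsc{NP}.

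For \emph{\textsc{NP}-hardness}, I would reduce from provability in the unit-free multiplicative fragment $\{\otimes,\parr\}$ of classical affine logic $\mathbf{LLW}$, composing this with Theorem~\ref{illtoll5}(1). Let $\vdash\Gamma$ be a classical sequent whose formulas are built from literals using only $\otimes$ and $\parr$ (no units, no additives, no exponentials), and let $x$ be a fresh variable. Inspecting the clauses defining $A^{\jump{x}}$, the translation of such a formula uses only $\multimap$ (a literal becomes $p$ or $p\multimap x$, and $\otimes$, $\parr$ become nested implications ending in $x$), so $\Gamma^{\jump{x}}\vdash x$ is a $\mathbf{BCK}$-sequent; writing $\Gamma^{\jump{x}}=A_1,\dots,A_n$, it is provable in $\mathbf{BCK}$ iff the $\mathbf{BCK}$-formula $A_1\multimap\cdots\multimap A_n\multimap x$ is provable, by currying (iterating ($\multimap$R) for one direction; (Init) and ($\multimap$L) for the other). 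The map $A\mapsto A^{\jump{x}}$ is polynomial-time computable and only increases size linearly. By Theorem~\ref{illtoll5}(1), together with the fact that a cut-free proof of a $\{\multimap\}$-sequent uses only $\{\multimap\}$-rules, $\vdash\Gamma$ is provable in $\mathbf{LLW}$ iff $A_1\multimap\cdots\multimap A_n\multimap x$ is provable in $\mathbf{BCK}$. Hence it suffices to invoke the known \textsc{NP}-hardness of provability in that multiplicative fragment of $\mathbf{LLW}$, which one obtains by adapting the classical \textsc{NP}-hardness argument for multiplicative linear logic to the presence of the weakening rule (alternatively, by a direct reduction from \textsc{Sat} transported along the translation).

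The \emph{main obstacle} is the hardness side; membership is routine once cut-elimination is available. For hardness one has to start from a source problem that is at once unit-free and additive-free, so that the image of the translation really is a purely implicational sequent, and then verify that this multiplicative fragment of $\mathbf{LLW}$ stays \textsc{NP}-hard — i.e., that weakening does not dissolve the $\otimes$-splitting carrying the combinatorial content of the reduction. Making that hardness argument precise (or pinning down the exact reference) is where essentially all the work is.
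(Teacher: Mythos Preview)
Your proposal is correct and follows essentially the same route as the paper: polynomial-size cut-free proofs for the upper bound (the paper phrases this as ``each subformula is analyzed at most once'', your weight function makes the same bound explicit), and a reduction from provability in the $\{\otimes,\parr\}$-fragment of $\mathbf{LLW}$ via the translation of Theorem~\ref{illtoll5} for the lower bound. The gap you flagged is exactly the one the paper closes by citing Lincoln--Mitchell--Scedrov--Shankar~\cite{LMSS92} and Kanovich~\cite{Kan94} for the \textsc{NP}-hardness of that constant-free multiplicative affine fragment.
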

\begin{proof}
Membership in NP is an immediate consequence of cut elimination for $\mathbf{BCK}$.
The proof is based on that of~\cite[Lemma~5.3]{LMSS92}.
In any cut-free proof in the system, the only applicable rules are (Init), ($\multimap$L), ($\multimap$R) and (W). 
Thus each subformula occurring in the endsequent is analyzed at most once in such a proof-tree. 
This means that, the size of a cut-free proof in the system is polynomially bounded in the size of the endsequent.
Hence the problem is in NP.\footnote{This was already pointed out in~\cite[p.\ 71]{Han17}.} 

For the hardness, we construct a polynomial-time reduction from provability in the $\{\otimes,\parr\}$-fragment of $\mathbf{LLW}$ (i.e., the constant-free fragment of multiplicative classical affine logic)  into provability in $\mathbf{BCK}$. 
The NP-completeness of the former is shown by Lincoln-Mitchell-Scedrov-Shankar~\cite{LMSS92}, and Kanovich~\cite{Kan94}.
Let $A$ be a classical $\{\otimes,\parr\}$-formula and $x$ a fresh variable not in $A$. 
Our goal is to show that $\vdash A$ is provable in the $\{\otimes,\parr\}$-fragment of $\mathbf{LLW}$ if and only if $\vdash \neg_x A^{[x]}$ is provable in $\mathbf{BCK}$.
As a consequence of cut elimination for $\mathbf{LLW}$, we can easily show that $\mathbf{LLW}$ is conservative over its $\{\otimes,\parr\}$-fragment.
That is, $\vdash A$ is provable in the $\{\otimes,\parr\}$-fragment of $\mathbf{LLW}$ if and only if $\vdash A$ is provable in $\mathbf{LLW}$.
By Theorem~\ref{illtoll5}, $\vdash A$ is provable in $\mathbf{LLW}$ if and only if $A^{[x]}\vdash x$ is provable in $\mathbf{ILLW}$. 
Here $A^{[x]}\vdash x$ is an intuitionistic $\{\multimap\}$-sequent.
Again, by the cut elimination theorem for $\mathbf{ILLW}$, we know that $\mathbf{ILLW}$ is conservative over $\mathbf{BCK}$; hence $A^{[x]}\vdash x$ is provable in $\mathbf{ILLW}$ if and only if $A^{[x]}\vdash x$ is provable in $\mathbf{BCK}$.
By the invertibility of ($\multimap$R), $A^{[x]}\vdash x$ is provable in $\mathbf{BCK}$ if and only if $\vdash \neg_x A^{[x]}$ is provable in $\mathbf{BCK}$; thus we conclude that $\vdash A$ is provable in the $\{\otimes,\parr\}$-fragment of $\mathbf{LLW}$ if and only if $\vdash \neg_x A^{[x]}$ is provable in $\mathbf{BCK}$.
\end{proof}

In particular, the translation $(\_)^{[\bot]}$ is a sort of standard negative translation from $\mathbf{LLW}$ (resp.\ $\mathbf{ELLW}$) to $\mathbf{ILZW}$ (resp.\ $\mathbf{IEZW}$).
By arguments similar to those in the proof of Theorem~\ref{illtoll5} (cf.\ Appendix~\ref{detailstranslation}), one can easily show the following:
\begin{Thm}\label{illtoll6}
	Let $\vdash \Gamma$ be a classical $\mathcal{L}_C$-sequent. $\vdash \Gamma$ is provable in $\mathbf{LLW}$ (resp.\ $\mathbf{ELLW}$) if and only if $\Gamma^{\jump{\bot}} \vdash$ is provable in $\mathbf{ILZW}$ (resp.\ $\mathbf{IEZW}$). 
\end{Thm}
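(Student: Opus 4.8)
The plan is to mimic the proof of Theorem~\ref{illtoll5} almost verbatim, replacing the parameter $x$ by the constant $\bot$ throughout, and then to check that the two places where the argument genuinely uses properties of $x$ (namely that it is a variable, hence has an (Init)-axiom, and that it is fresh) can be replaced by properties of $\bot$ in $\mathbf{ILZW}$ (resp.\ $\mathbf{IEZW}$). Concretely, observe that under the substitution $x \mapsto \bot$ the formula $\neg_x A^{[x]}$ becomes $A^{\jump\bot}\multimap\bot$, which is (provably equivalent to, and essentially syntactically) $A^{\jump\bot}$ together with the sequent $A^{\jump\bot}\vdash$ in place of $A^{\jump\bot}\vdash x$; the rules $(\bot\mathrm{L})$ and $(\bot\mathrm{R})$ of $\mathbf{ILZW}$ are exactly what is needed to make $\bot$ behave, from the right, like the fresh variable $x$ did. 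So the statement to prove is really just the $\bot$-instance of the already-established parametric translation.

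First I would recall the structure of the proof in Appendix~\ref{detailstranslation}: the soundness direction ($\Rightarrow$) proceeds by induction on a cut-free $\mathbf{LLW}$-proof (resp.\ $\mathbf{ELLW}$-proof) of $\vdash\Gamma$, using Theorem~\ref{cut}, translating each inference rule into a derivable sequence of $\mathbf{ILLW}$ (resp.\ $\mathbf{IELW}$) inferences on the translated sequent $\Gamma^{[x]}\vdash x$; the completeness direction ($\Leftarrow$) goes the other way, typically by showing the translation has an inverse up to provability, or by a direct induction on a cut-free proof of $\Gamma^{[x]}\vdash x$. For the $\bot$-version I would rerun this induction with target calculus $\mathbf{ILZW}$ (resp.\ $\mathbf{IEZW}$) and target sequent $\Gamma^{\jump\bot}\vdash$. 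The key points to verify rule by rule are: (i) wherever the original proof uses the (Init) axiom $x\vdash x$ to close a leaf, one now needs $\bot\vdash$, which is the axiom $(\bot\mathrm{L})$ — available in $\mathbf{ILZW}$; (ii) wherever a $\neg_x$ is introduced or eliminated on the right of $\vdash$, the manipulations $\Gamma\vdash x$ versus $\Gamma,A\vdash x$ become $\Gamma\vdash$ versus $\Gamma,A\vdash$, and the move from $\Gamma\vdash$ to $\Gamma\vdash\bot$ (needed to feed a $(\multimap\mathrm{L})$ against some $\neg_\bot C = C\multimap\bot$) is exactly $(\bot\mathrm{R})$; (iii) the promotion rule $(\oc\mathrm{P})$ (resp.\ $(\oc\mathrm{F})$ in the elementary case) interacts with the translation clause for $\oc$ and $\wn$ the same way as before, since those clauses are unchanged and the $\bot$ does not sit under a $\oc$ in any problematic position. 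Because $\mathbf{IEZW}$ differs from $\mathbf{IELW}$ only by the presence of $\bot$ and the corresponding rules, and $\mathbf{ELLW}$ from its $\mathcal{L}_C^+$-analogue likewise, the elementary case is handled by the identical bookkeeping with $(\oc\mathrm{F})$ in place of $(\oc\mathrm{P})$.

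I expect the main obstacle to be purely bookkeeping: there is no longer a fresh variable $x$, so the step in the completeness direction that, in the original proof, "reads off" a classical proof from an intuitionistic one by tracking the distinguished occurrence of $x$ must instead track the distinguished occurrences of $\bot$ on the right; one must make sure that in a cut-free $\mathbf{ILZW}$-proof of $\Gamma^{\jump\bot}\vdash$ (equivalently $\Gamma^{\jump\bot}\vdash\bot$ via $(\bot\mathrm{R})$/invertibility) every use of $\bot$ can be attributed to the translation and not to some spurious $\bot$ arising from the $\mathbf{ILZW}$-specific rules — but since the only $\mathbf{ILZW}$ rules touching $\bot$ are $(\bot\mathrm{L})$, $(\bot\mathrm{R})$, $(\mathbf{1}\mathrm{L})$-type structural ones and weakening, and since weakening-introduced $\bot$'s can be permuted away in a cut-free proof, this causes no real difficulty. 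The only genuinely new small lemma needed is the equivalence, provable in $\mathbf{ILZW}$, between the sequents $\Gamma\vdash$ and $\Gamma\vdash\bot$, which is immediate from $(\bot\mathrm{L})$, $(\bot\mathrm{R})$ and (Cut) (and survives cut-elimination because $(\bot\mathrm{R})$ is invertible). With that in hand, the whole statement follows by replaying Theorem~\ref{illtoll5}'s proof with the cosmetic substitution $x\rightsquigarrow\bot$, which is precisely why the authors say it can be shown "by arguments similar to those in the proof of Theorem~\ref{illtoll5}."
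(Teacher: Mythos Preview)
Your plan for the soundness direction ($\Rightarrow$) is fine and matches the paper: this is exactly the adaptation of Lemma~\ref{illtoll1} with $F=\bot$, where the base case uses $(\bot\mathrm{L})$ in place of (Init) on $x$, and $(\bot\mathrm{R})$ supplies the passage $\Gamma\vdash\;\Leftrightarrow\;\Gamma\vdash\bot$.

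For the completeness direction ($\Leftarrow$), however, you have misidentified what the paper actually does in Appendix~\ref{detailstranslation}, and as a result you are worrying about a non-issue. The paper's proof of Theorem~\ref{illtoll5} does \emph{not} proceed by direct induction on a cut-free intuitionistic proof of $\Gamma^{[x]}\vdash x$, ``tracking the distinguished occurrence of $x$''. It instead (i) sends the intuitionistic proof back to $\mathbf{LLW}$ via the embedding $\underline{(\cdot)}$ (Lemma~\ref{illtoll2}), (ii) invokes Lemma~\ref{illtoll3} to obtain $\vdash\underline{B^{[x]}},B$ in $\mathbf{LLW}[\Phi_{\underline{x}}]$ for each $B\in\Gamma$, (iii) cuts to get $\vdash\Gamma$ in $\mathbf{LLW}[\Phi_{\underline{x}}]$, and finally (iv) substitutes $x\mapsto\bot$ so that $\Phi_{\underline{x}}$ becomes $\{\mathbf{1},\bot\parr\mathbf{1}\}$, a set of $\mathbf{LLW}$-theorems. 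The freshness of $x$ is used \emph{only} in step~(iv), to guarantee $\tau(\Gamma)=\Gamma$.

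When you replay this with $F=\bot$ from the start, step~(iv) simply disappears: one needs Lemma~\ref{illtoll2} extended to $\mathbf{ILZW}$ (trivially, since $\underline{\bot}=\bot$), and Lemma~\ref{illtoll3} with $F=\bot$; then $\Phi_{\underline{\bot}}=\Phi_\bot=\{\mathbf{1},\bot\parr\mathbf{1}\}$ already consists of $\mathbf{LLW}$-provable formulas, so $\mathbf{LLW}[\Phi_\bot]$ coincides with $\mathbf{LLW}$ and no substitution is required. Your concerns about ``spurious $\bot$'' in a cut-free $\mathbf{ILZW}$-proof never arise, because the argument never inspects such a proof rule-by-rule. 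The adaptation is therefore \emph{strictly simpler} than the original, not harder.
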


\section{Alternating branching VASS}
\label{abvass}
The whole content of this section is taken from~\cite[Section~3]{LS15}.
Let $d$ be in $\mathbb{N}$.
The symbols $\vec v_1,\vec v_2,\ldots$ are used to denote $d$-dimensional vectors.
In particular, we write $\vec e_i$ for the \emph{$i$-th unit vector} in $\mathbb{N}^d$ (i.e., the vector with a one in the $i$-th coordinate and zeros elsewhere), and $\vec 0$ for the vector whose every coordinate is zero. 

An \emph{alternating branching vector addition system with states and full zero tests} (\abvass, for short) is a structure of the form $\mathcal{A}=\langle Q,d,T_u,T_s,T_f,T_z \rangle$ where: 
\begin{itemize}
	\item $Q$ is a finite set,
	\item $d$ is in $\mathbb{N}$,
	\item $T_u$ is a finite subset of $Q \times \mathbb{Z}^d \times Q$,
	\item $T_s$ and $T_f$ are subsets of $Q^3$, and
	\item $T_z$ is a subset of $Q^2$. 
\end{itemize}

We call $Q$ a \emph{state space}, $d$ a \emph{dimension}, and $T_u$ (resp.\ $T_s$, $T_f$, $T_z$) the set of \emph{unary} (resp.\ \emph{split}, \emph{fork}, \emph{full zero test}) rules of $\mathcal{A}$. 
For readability, we always write $q \xrightarrow{\vec u} q'$ for $(q,\vec u,q') \in T_u$, $q \rightarrow q_1 \land q_2$
for $(q,q_1,q_2) \in T_f$, $q \rightarrow q_1 + q_2$
for $(q,q_1,q_2) \in T_s$, and $q \fz q'$ for $(q,q') \in T_z$.
A \emph{configuration} of $\mathcal{A}$ is an element of $Q \times \mathbb{N}^d$.

Given an \abvass\ $\mathcal{A}=\langle Q,d,T_u,T_s,T_f,T_z \rangle$, the operational semantics for $\mathcal{A}$ is given by a deduction system over configurations in $Q \times \mathbb{N}^d$. 
It consists of the deduction rules depicted in Figure~\ref{deduction}.
\begin{figure}[t]
\begin{center}
\begin{tabular}{c@{\hspace{4em}}c}
\rootAtTop
\AxiomC{$q',\vec v+\vec u$}
\RightLabel{$(q\xrightarrow{\vec u}q')$}
\UnaryInfC{$q,\vec v$}
\DisplayProof
&
\rootAtTop
\AxiomC{$q',\vec 0$}
\RightLabel{$(q \fz q')$}
\UnaryInfC{$q,\vec 0$}
\DisplayProof
\\[1.5em]
\rootAtTop
\AxiomC{$q_1,\vec v$}
\AxiomC{$q_2,\vec v$}
\RightLabel{$(q \rightarrow q_1 \wedge q_2)$}
\BinaryInfC{$q,\vec v$}
\DisplayProof
&
\rootAtTop
\AxiomC{$q_1,\vec v_1$}
\AxiomC{$q_2,\vec v_2$}
\RightLabel{$(q \rightarrow q_1 + q_2)$}
\BinaryInfC{$q,\vec v_1+\vec v_2$}
\DisplayProof
\end{tabular}
\end{center}
\caption{The deduction rules of an \abvass\ $\mathcal{A}=\langle Q,d,T_u,T_s,T_f,T_z \rangle$, where $q \fz q' \in T_z$, $q \xrightarrow{\vec u}q' \in T_u$, $q \rightarrow q_1 + q_2 \in T_s$, and $q \rightarrow q_1 \wedge q_2 \in T_f$. 
The symbol $+$ stands for componentwise addition and $\vec v+ \vec u$ must be in $\mathbb{N}^d$.}
\label{deduction}
\end{figure}
Given a subset $Q_{\ell}$ of $Q$, a \emph{$Q_{\ell} \times \{\vec 0\}$-leaf-covering deduction tree} in $\mathcal{A}$ is a finite tree labeled by configurations, where leaves are all in $Q_{\ell} \times \{\vec 0\}$ and each other node is obtained from its children by applying one of the deduction rules derived from $T_u \cup T_s \cup T_f \cup T_z$.  
A deduction tree $\mathcal{T}$ whose root configuration is $q,\vec v$ is denoted by the following figure:
\begin{prooftree}
	\AxiomC{$q,\vec v$}
	\noLine
	\UnaryInfC{$\mathcal{T}$}
\end{prooftree}
We write $\mathcal{A},Q_{\ell} \judge q,\vec v$ if there exists a $Q_{\ell} \times \{\vec 0\}$-leaf-covering deduction tree whose root configuration is $q,\vec v$.

In addition, we also give an account of another semantics for \abvass s. 
Given an \abvass\ $\mathcal{A}=\langle Q,d,T_u,T_s,T_f,T_z \rangle$, the \emph{lossy semantics} for $\mathcal{A}$ is given by the aforementioned deduction system of $\mathcal{A}$ augmented with the following additional deduction rules:
\begin{prooftree}
	\AxiomC{$q,\vec v + \vec e_i$}
	\RightLabel{$(\mathsf{loss})$}
	\UnaryInfC{$q,\vec v$}
\end{prooftree}
for every $q \in Q$ and every $i \in \{1,\ldots,d\}$.
In the natural way, we define the notion of \emph{$Q_{\ell} \times \{\vec 0\}$-leaf-covering lossy deduction tree} in $\mathcal{A}$.
We write $\mathcal{A},Q_{\ell} \judge_{\ell} q,\vec v$ if there exists a $Q_{\ell} \times \{\vec 0\}$-leaf-covering lossy deduction tree with root $q,\vec v$. 

\section{Some Tower-complete problems}
\label{tower}
Following the terminology of \cite{LS15,Sch16}, we define
$$
\textsc{Tower}:=\bigcup_{f \in \mathsf{FELEMENTARY}} \textsc{DTime}(\tet{2}{f(n)})
$$
where $\mathsf{FELEMENTARY}$ denotes the set of elementary functions.
This class of problems is closed under elementary many-one reductions (and elementary Turing reductions), i.e., for any two languages $X$ and $Y$, if there is an elementary reduction from $X$ to $Y$ and $Y$ is in \textsc{Tower}, then $X$ is in \textsc{Tower}. 
The notion of \emph{\textsc{Tower}-completeness} is defined with respect to elementary reductions in the usual manner.
For an elaborate discussion on the fast-growing complexity hierarchy, we refer the reader to \cite{Sch16}.

For later use, we summarize here some \tower-complete problems.
Given an \abvass\ $\mathcal{A}=\langle Q,d,T_u,T_s,T_f,T_z \rangle$, $Q_{\ell} \subseteq Q$, and $q_r \in Q$, the \emph{reachability problem} (resp.\ \emph{lossy reachability problem}) asks whether it holds that $\mathcal{A},Q_{\ell} \judge q_r,\vec 0$ (resp.\ $\mathcal{A},Q_{\ell} \judge_{\ell} q_r,\vec 0$). 

\begin{Thm}[Lazi\'c and Schmitz~\cite{LS15}, Theorem~3.6]
	\label{abvass1}
	The lossy reachability problem for \abvass s is \tower-complete.
\end{Thm}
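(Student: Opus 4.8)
The plan is to prove the two directions of the completeness claim separately: that the lossy reachability problem for \abvass s lies in \tower, and that it is \tower-hard under elementary (indeed polynomial) reductions. Since, as recalled above, \tower\ is closed under elementary reductions, combining the two yields \tower-completeness.

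For membership in \tower, I would start from the observation that lossy reachability is \emph{monotone} in the counter component: appending a chain of $(\mathsf{loss})$ steps at the root of a $Q_{\ell}\times\{\vec 0\}$-leaf-covering lossy deduction tree shows that the set of configurations from which the leaves are lossily coverable is downward closed in $\mathbb{N}^d$, hence a finite union of ideals. The heart of the upper bound is then a ``small witness'' lemma: if $\mathcal{A},Q_{\ell}\judge_{\ell} q_r,\vec 0$, then this is witnessed by a deduction tree in which every coordinate of every configuration is bounded by $\tet{2}{e(|\mathcal{A}|)}$ for a fixed elementary function $e$. Granting this, the reachability question reduces to AND/OR reachability in the finite configuration graph over $Q\times\{0,\dots,\tet{2}{e(|\mathcal{A}|)}\}^d$ --- a node being justified by choosing one applicable rule and then justifying all of its premises --- and this graph has size a tower of elementary height, since $d$ and $|Q|$ are polynomially bounded and \tower\ is closed under taking polynomial powers; AND/OR reachability is then decidable in time polynomial in that size, so the problem is in \tower. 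The small-witness lemma itself is proved by a pumping argument on a minimal witnessing tree: if some coordinate ever grows too large, the lossy steps make it possible to splice out a segment of a branch along which a state recurs with a dominating counter value, contradicting minimality; the explicit tower bound is extracted from a controlled bad-sequence estimate for Dickson's lemma (the Length Function Theorem), exploiting that along any root-to-leaf branch the counters can only grow through unary rules, by amounts bounded by the rule magnitudes.

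For \tower-hardness, I would reduce from a canonical \tower-hard problem, e.g.\ the halting of a deterministic Turing machine on empty input within space $\tet{2}{n}$ (equivalently, control-state reachability for counter machines whose counters are bounded by a tower of height $n$). The \abvass\ is assembled in layers. At the bottom, a fixed number of counters simulate the tape and head of the machine, with increments and decrements realised by unary rules and the requisite emptiness checks by the full zero-test rules ($q\fz q'$). On top of this sits a hierarchy of \emph{amplifier} gadgets: a level-$(k{+}1)$ amplifier uses a fork rule $q\to q_1\wedge q_2$ to spawn two alternating subderivations, each of which recursively invokes a level-$k$ amplifier, so that a counter value certifiable at level $k$ becomes an exponentially larger value certifiable at level $k{+}1$ at only a polynomial increase in the size of the gadget; stacking $n$ levels yields counters holding values up to $\tet{2}{n}$ with an \abvass\ of size polynomial in $n$. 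The full zero-test rules serve both to force each piece of counter arithmetic to be carried out exactly and, crucially for the lossy semantics, to make the encoding loss-robust: the invariants are arranged so that any token discarded by a $(\mathsf{loss})$ step propagates to a later zero test that then fails, and hence every lossy leaf-covering deduction tree reaching the designated root state in fact witnesses a faithful, loss-free run of the machine.

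The step I expect to be the main obstacle is the upper bound, and specifically bringing the bound down to \tower\ (the level $F_3$ of the fast-growing hierarchy) rather than the Ackermannian level that a crude count gives once the dimension $d$ is part of the input; this requires the careful, branch-wise controlled bad-sequence analysis sketched above, extracting from the branching structure --- forks copy the counters, splits only partition them, so counters grow along a branch solely through unary rules --- enough control to keep the relevant bad sequences at a fixed Grzegorczyk level. On the hardness side, the delicate point is designing the amplifier gadgets so that alternation genuinely buys an exponential jump at each level while the simulated arithmetic stays both correct and loss-robust. Putting the two directions together gives the theorem; this is essentially \cite[Theorem~3.6]{LS15}.
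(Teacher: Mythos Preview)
The paper does not prove this theorem: it is quoted verbatim from Lazi\'c and Schmitz~\cite{LS15} as an imported result and used as a black box (see Section~\ref{tower}). There is therefore no ``paper's own proof'' to compare your sketch against. What can be said is whether your outline is a plausible route to the result and how it relates to what~\cite{LS15} actually does, to the extent visible from this paper.

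On the lower bound, your plan is more elaborate than necessary. You propose to exploit fork rules for an amplifier cascade and full zero tests for loss-robustness, but the present paper explicitly records (Theorem~\ref{bvass1}, again citing~\cite{LS15}) that lossy reachability is already \tower-hard for \emph{ordinary BVASS}, i.e.\ with neither fork rules nor full zero tests---only unary $\pm\vec e_i$ rules and splits. So the hardness does not hinge on alternation or on the $\fz$ tests; whatever construction~\cite{LS15} uses must live entirely inside the split fragment. Your amplifier-via-forks idea might still go through, but it proves less than what is cited here, and it obscures where the hardness really comes from.

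On the upper bound, your instinct that the naive Length-Function-Theorem argument for $(\mathbb{N}^d,\leq)$ yields only an Ackermannian bound (because $d$ is part of the input) is exactly right, and you flag this as the main obstacle. However, the resolution you sketch---``keeping the relevant bad sequences at a fixed Grzegorczyk level'' by exploiting that counters grow along a branch only through unary rules---does not by itself bring the bound down to \tower: controlled bad sequences over $\mathbb{N}^d$ still have length bounded by a function at level $d$ of the fast-growing hierarchy, and letting $d$ vary is precisely what produces non-primitive-recursive behaviour. Getting a genuine tower bound requires a different mechanism (in the lossy setting, typically a Rackoff-style induction on the dimension rather than a generic well-quasi-order argument). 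Your sketch identifies the difficulty correctly but does not yet contain the idea that resolves it.
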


In contrast, the reachability problem is undecidable for \abvass s. 
In fact, the same holds even for alternating VASSs, which are \abvass s with only unary rules and fork rules; see \cite[Section~3.4]{LMSS92} and~\cite[Section~3.3.1]{LS15} for details. 

Intriguingly, the lossy reachability problem is complete for \textsc{Tower} even when a restricted version of \abvass s is considered.
An \emph{ordinary \abvass}\ is an \abvass\ $\mathcal{A}=\langle Q,d,T_u,T_s,T_f,T_z \rangle$ where for every $q \xrightarrow{\vec u}q' \in T_u$ it holds that either $\vec u=\vec e_i$ or $\vec u=-\vec e_i$ for some
$1 \leq i \leq d$. 
A \emph{branching vector addition system with states} (BVASS, for short) is an \abvass\ $\mathcal{A}=\langle Q,d,T_u,T_s,T_f,T_z \rangle$ where $T_f$ and $T_z$ are both empty. 
The main results in the remaining sections rely crucially on the following two complexity results:

\begin{Thm}[Lazi\'c and Schmitz~\cite{LS15}, Lemma~3.5 and Theorem~3.6]
	\label{bvass1}
	The lossy reachability problem for ordinary BVASSs is \textsc{Tower}-complete.
\end{Thm}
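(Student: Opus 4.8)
The plan is to handle the two directions separately. For membership in \tower, nothing new is required: an ordinary BVASS is a special case of an \abvass, and an instance $\mathcal{A},Q_{\ell}\judge_{\ell} q_r,\vec 0$ of the lossy reachability problem denotes literally the same thing under either reading, so membership follows immediately from Theorem~\ref{abvass1}. The substance lies in the \tower-hardness, and there Theorem~\ref{abvass1} is not directly usable: it supplies hardness only for the \emph{richer} \abvass\ model, whereas we must establish hardness already for the \emph{poorer} ordinary-BVASS model, whose only rules are single-unit unary rules and split rules $q\to q_1+q_2$. So I would aim either (a) to reduce lossy reachability for \abvass s to lossy reachability for ordinary BVASSs, or, failing a clean version of (a), (b) to encode a known \tower-hard problem directly into ordinary lossy BVASSs.

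For (a) the first step is the easy one: each unary rule $q\xrightarrow{\vec u}q'$ with $\vec u\in\mathbb{Z}^d$ is replaced by a path of fresh states performing the individual increments and decrements that make up $\vec u$, and if the entries of $\vec u$ are written in binary one inserts instead a small ``binary counter'' gadget (again using only unit updates and a few auxiliary coordinates) so that the reduction stays polynomial-size, hence elementary; one then checks that this preserves $\judge_{\ell}$ in both directions, using that a loss inside such a path can be commuted to its endpoints. The remaining step — eliminating the fork rules $q\to q_1\wedge q_2$ and the full-zero-test rules $q\fz q'$ — is where I expect the difficulty to concentrate, and it is genuinely delicate: the lossy semantics makes every set $\{\vec v:\mathcal{A},Q_{\ell}\judge_{\ell} q,\vec v\}$ downward closed, so a split rule, which \emph{partitions} the current marking between its two children, cannot by itself imitate a fork rule, which hands the \emph{whole} marking to each child; similarly, a full zero test cannot be emulated by a local test on an auxiliary ``sum'' coordinate, since lossiness makes exact zero-testing impossible. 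A plausible route is to carry two synchronised copies of every counter and to re-synchronise them immediately after each simulated branching step, arguing correctness via an invariant relating the two copies — and making that invariant survive arbitrary token losses is the crux.

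Approach (b), which is the one Lazi\'c and Schmitz take, reduces from the halting problem for counter machines whose configurations stay bounded by a tower of exponentials of height $n$ in the input size, a problem that is \tower-hard via the fast-growing hierarchy machinery recalled in Section~\ref{tower}. The heart of the construction is a family of \emph{amplifier} gadgets: a gadget that, started from a marking encoding a number $m$, reaches one encoding $2^m$, implemented with unit unary rules and split rules alone through the identity $2^{m}=2^{m-1}+2^{m-1}$, with the split rule supplying exactly the additive doubling; stacking $n$ amplifiers yields a working space of tower-exponential size, which then drives a faithful simulation of the bounded machine. The main obstacle — the part I expect to absorb essentially all of the technical effort — is making these gadgets \emph{robust under lossiness}: because tokens may silently vanish, one must interleave a verification phase so that the requirement of ending at $q_r,\vec 0$ over leaves all labelled $\vec 0$ forces every accepting deduction tree to be token-conserving, and hence to encode an honest, loss-free run of the machine rather than one that ``profits'' from losses. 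This token-conservation (or potential) bookkeeping is precisely the kind of argument that dominates the proofs in~\cite{LS15}.
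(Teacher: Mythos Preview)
The paper does not supply a proof of this theorem: it is quoted from Lazi\'c and Schmitz~\cite{LS15} (their Lemma~3.5 and Theorem~3.6) and used as a black box. There is therefore no ``paper's own proof'' to compare your proposal against.

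That said, your outline is broadly faithful to what~\cite{LS15} actually does. Membership is exactly as you say. For hardness, your approach~(b) is the correct one and is the route taken there: the lower-bound construction behind~\cite[Theorem~3.6]{LS15} already uses only unary and split rules---no forks, no full zero tests---so it lands directly in the BVASS fragment, and Lemma~3.5 of~\cite{LS15} is precisely the normalisation of arbitrary unary updates to unit-vector ones that you sketch first. Your intuition that split rules furnish the additive doubling behind an exponential amplifier, and that the real work is a robustness argument ensuring that token losses cannot help an accepting deduction tree, matches the shape of the argument in~\cite{LS15}, though the machinery there is phrased via Hardy computations in the fast-growing hierarchy rather than a bare $2^m=2^{m-1}+2^{m-1}$ gadget. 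Your approach~(a) is \emph{not} how~\cite{LS15} proceeds, and your hesitation about it is well founded: Lemma~3.5 only normalises unary updates; it does not eliminate forks or zero tests, and no direct structural reduction from lossy \abvass\ reachability to lossy BVASS reachability is given (or needed).
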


\begin{Thm}[Lazi\'c and Schmitz~\cite{LS15}; Fact~4.2, Corollary~5.4 and Corollary 6.3]
	\label{logic}
	The provability problems for $\mathbf{ILZW}$ and $\mathbf{LLW}$ are \textsc{Tower}-complete. 
\end{Thm}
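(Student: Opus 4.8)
The plan is to establish the two inclusions separately, essentially reproducing the argument of \cite{LS15}: \textsc{Tower}-membership via a polynomial-time reduction of provability to the lossy reachability problem for \abvass s (Theorem~\ref{abvass1}), and \textsc{Tower}-hardness via a polynomial-time reduction of the lossy reachability problem for ordinary BVASSs (Theorem~\ref{bvass1}) to provability. Since, by the negative translation of Theorem~\ref{illtoll6}, provability in $\mathbf{LLW}$ reduces in polynomial time to provability in $\mathbf{ILZW}$, I would carry out the membership argument only for $\mathbf{ILZW}$; for hardness it is convenient to treat both systems directly.

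\textbf{Upper bound.} By the cut-elimination theorem (Theorem~\ref{cut}) it suffices to decide the existence of a \emph{cut-free} proof, and by the subformula property every sequent in such a proof of a fixed endsequent $E$ has an antecedent that is a multiset over the finite set $S$ of subformulas of $E$ and a stoup in $S\cup\{\varepsilon\}$. I would build in polynomial time an \abvass\ $\mathcal A_E$ of dimension $|S|$, using one counter per subformula to record antecedent multiplicities and storing the stoup together with a polynomially bounded amount of control in the state. Bottom-up cut-free proof search is then simulated rule by rule: each single-premise inference that rewrites one antecedent occurrence or the stoup ($\otimes$L, $\with$L$_i$, $\oplus$R$_i$, $\oc$D, $\mathbf 1$L, $\multimap$R, and so on) becomes a \emph{unary} rule changing a few counters by some $\pm\vec e_i$; each context-splitting two-premise inference ($\multimap$L, $\otimes$R) becomes a \emph{split} rule; each context-sharing two-premise inference ($\with$R, $\oplus$L) becomes a \emph{fork} rule; the promotion rule ($\oc$P) and the empty-antecedent rule ($\mathbf 1$R) are guarded by a \emph{full zero test}, applicable only once the counters attached to non-$\oc$ subformulas have been driven to $\vec 0$; and the weakening rule (W) becomes a $(\mathsf{loss})$ step, which also lets the axioms (Init), ($\top$R), ($\bot$L), ($\mathbf 0$L) be reached as leaves carrying the zero vector. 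One then shows that $E$ is provable in $\mathbf{ILZW}$ if and only if $\mathcal A_E,Q_\ell \judge_\ell q_r,\vec v_E$, where $q_r,\vec v_E$ codes $E$ and $Q_\ell$ collects the leaf states, so Theorem~\ref{abvass1} yields the \textsc{Tower} upper bound; for $\mathbf{LLW}$ one precomposes with Theorem~\ref{illtoll6}.

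\textbf{Lower bound.} Given an ordinary BVASS $\mathcal B=\langle Q,d,T_u,T_s,\emptyset,\emptyset\rangle$, a set $Q_\ell\subseteq Q$ and $q_r\in Q$, take a fresh propositional variable $\hat q$ for each $q\in Q$ and $a_i$ for each coordinate $i$, and code a configuration $q,(n_1,\dots,n_d)$ by the sequent with antecedent consisting of $n_i$ copies of $a_i$ for each $i$ together with $\oc\Delta$, and with succedent $\hat q$; here $\oc\Delta$ lists, once each, the formula $\oc\bigl((a_i\multimap\hat{q'})\multimap\hat q\bigr)$ for every increment $q\xrightarrow{+\vec e_i}q'$, the formula $\oc\bigl(\hat{q'}\multimap(a_i\multimap\hat q)\bigr)$ for every decrement $q\xrightarrow{-\vec e_i}q'$, the formula $\oc\bigl((\hat{q_1}\otimes\hat{q_2})\multimap\hat q\bigr)$ for every split $q\to q_1+q_2$, and the formula $\oc\hat q$ for every $q\in Q_\ell$. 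A single step of $\mathcal B$ is mirrored by dereliction and contraction of the relevant member of $\oc\Delta$ followed by a bounded cut-free combination of ($\multimap$L), ($\multimap$R) and, for a split rule, an ($\otimes$R) on the left premise that partitions the multiset of $a_i$'s; the loss rule is absorbed by affine weakening (W); a leaf $q\in Q_\ell$ is discharged by dereliction of $\oc\hat q$, (Init) and weakening. Conversely, inspecting a cut-free proof of the target sequent $\oc\Delta\vdash\hat{q_r}$ (its succedent an atom, its $\oc$-formulas all from $\oc\Delta$) shows it must have exactly this shape. Hence $\mathcal B,Q_\ell\judge_\ell q_r,\vec 0$ if and only if $\oc\Delta\vdash\hat{q_r}$ --- equivalently, currying, $\vdash \oc R_1\multimap\cdots\multimap\oc R_k\multimap\hat{q_r}$ where $\oc\Delta=\oc R_1,\dots,\oc R_k$ --- is provable in $\mathbf{ILZW}$, and indeed in its $\{\otimes,\multimap,\oc\}$-fragment. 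Carrying out the identical construction with one-sided sequents, reading $\multimap$ via the de Morgan duality, yields \textsc{Tower}-hardness of $\mathbf{LLW}$ as well. Combining the two bounds with the closure of \textsc{Tower} under elementary reductions proves the theorem.

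\textbf{Main obstacle.} The delicate point, and the source of the non-elementary behaviour, is the treatment of the exponential modality in the upper bound: the promotion rule forces the antecedent to be purely exponential, so it must be encoded as a zero test on the non-$\oc$ counters, and it is the unrestricted nesting of such promotions --- zero tests interleaved with the branching produced by context splits and additive rules --- that pushes the complexity above \textsc{Elementary}; one has to verify that all of this, together with the bookkeeping for the stoup, the constants, and (classically) for one-sidedness, fits into an \abvass\ with only polynomially many states using just the four rule types and loss. On the lower-bound side the corresponding care is needed to see that the translation between lossy leaf-covering deduction trees and cut-free proofs is exact in both directions --- in particular that spurious uses of the $\oc\hat q$ gadgets and of weakening cannot manufacture proofs of non-reachable instances.
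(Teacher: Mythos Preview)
The paper does not prove this theorem at all: it is quoted as a known result from \cite{LS15} (with precise pointers to Fact~4.2, Corollary~5.4 and Corollary~6.3 there), and is used as a black box in Section~\ref{encodingoflogic}. Your proposal is a faithful high-level reconstruction of the Lazi\'c--Schmitz argument, and the paper in fact re-explains pieces of that argument elsewhere for closely related systems: the \abvass\ encoding you sketch for the upper bound is essentially the construction $\mathcal{A}^{I}_F$ of \cite{LS15} recalled (and adapted to $\mathbf{IEZW}$ and $\mathbf{ILZW}'$) in Section~\ref{towerupperellw} and Figure~\ref{rule3}, while the BVASS-to-logic encoding you give for the lower bound is the intuitionistic counterpart of the classical encoding the paper reviews at the start of Section~\ref{encodingofbvass}.

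One point in your sketch is inaccurate and worth flagging. You claim the \abvass\ can be built ``in polynomial time'' with ``a polynomially bounded amount of control in the state'', and that the promotion rule is handled by a zero test ``on the counters attached to non-$\oc$ subformulas''. But \abvass s only have \emph{full} zero tests, so one cannot test a proper subset of the counters. The actual construction (both in \cite{LS15} and as recalled in Figure~\ref{rule3}) therefore moves the $\oc$-formulas of the antecedent out of the counters and into the state, recording them as a \emph{set} in $\mathcal{P}(S_{\oc})$; this is also what makes ($\oc$C) harmless. Consequently the state space is exponential in $|S_{\oc}|$ and the reduction is elementary (polynomial-space in the sense of \cite{LS15}) rather than polynomial-time. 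This does not affect the conclusion, since \textsc{Tower} is closed under elementary reductions, but your description of the mechanism for ($\oc$P) would not work as stated.
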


\section{Membership in Tower of contraction-free logics}\label{encodingoflogic}
This section consists of two parts. 
We first show that provability in elementary affine logic is in \textsc{Tower} (Section~\ref{towerupperellw}). 
Secondly, we also show the \tower-membership of deducibility in $\mathbf{FL}_{\mathbf{ew}}$ and related logical systems (Section~\ref{towerupperflew}). 

\subsection{Tower upper bound for provability in elementary affine logic}\label{towerupperellw}
\begin{figure}[t]
	\centering
	\begin{tikzpicture}[auto,draw,node distance=1.0cm]
	\node[state,label=left:{($\mathsf{Init1}$)}](Init11){$\emptyset,D$};
	\node[state,accepting by double,right=of Init11,inner sep=4pt](Init12){$q_\ell$};
	\node[rstate,label=left:{($\mathsf{Init2}$)},right=1.5cm of Init12](Init13){$\{\oc A\},\oc A$};
	\node[state,accepting by double,right=of Init13,inner sep=4pt](Init14){$q_\ell$};   
	\node[state,right=1.5cm of Init14,label=left:{($\mathbf{1}\mathsf{R}$})](OneR1){$\emptyset,\mathbf 1$};
	\node[state,accepting by double,right=of OneR1,inner sep=4pt](OneR2){$q_\ell$};
	\path[->, >=stealth] (OneR1) edge node{$\vec 0$} (OneR2);
	\path[->, >=stealth] (Init11) edge node{$-\vec e_{D}$} (Init12)
	(Init13) edge node{$\vec 0$} (Init14)
	(OneR1) edge node{$\vec 0$} (OneR2);
	\node[state,below=1cm of
	Init11,inner sep=2.0pt,label=left:{($\bot\mathsf{L}$)}](BottomL1){$\emptyset,\bullet$};
	\node[state,accepting by double,right=of BottomL1,inner sep=4pt](BottomL2){$q_\ell$};
	\path[->, >=stealth] (BottomL1) edge node{$-\vec e_\bot$} (BottomL2);
	\node[state, right= 1.5cm of BottomL2,label=left:{($\mathbf{1}\mathsf{L}$)}](OneL1){$q,X$};
	\node[state,right=of OneL1](OneL2){$q,X$};
	\path[->, >=stealth] (OneL1) edge node{$-\vec e_{\mathbf 1}$} (OneL2);
	\node[state,right=1.5cm of OneL2,label=left:{($\bot\mathsf{R}$})](BottomR1){$q,\bot$};
	\node[state,inner sep=2.5pt,right=of BottomR1](BottomR2){$q,\bullet$};
	\path[->, >=stealth] (BottomR1) edge node{$\vec 0$} (BottomR2);
	\node[state,below right=1.3cm and 0.2cm of BottomL1,label=left:{($\otimes\mathsf{L}$)}](TensorL1){$q,X$};
	\node[intermediate,right=1cm of TensorL1](TensorL2){};
	\node[state,right=1.2cm of TensorL2](TensorL3){$q,X$};
	\path[->, >=stealth] (TensorL1) edge node{$-\vec e_{A\otimes B}$} (TensorL2)
	(TensorL2) edge node{$\vec e_A+\vec e_B$} (TensorL3);
	\node[rstate,right=1.5cm of TensorL3,label=left:{($\otimes\mathsf{R}$)}, label=right:{$+$}](TensorR1){$q_1\cup q_2,A\otimes B$};
	\node[state,above right=.2cm and .5cm of TensorR1](TensorR2){$q_1,A$};
	\node[state,below right=.2cm and .5cm of TensorR1](TensorR3){$q_2,B$};
	\path[->, >=stealth] (TensorR1.east) edge (TensorR2)
	(TensorR1.east) edge (TensorR3);
    %
	\node[rstate,below left=3.5cm and .4cm of BottomL1,label=left:{($\multimap$\textsf{L})}](ImplicationL1){$q_1\cup q_2,X$};
	\node[intermediate,right=1.1cm of ImplicationL1,label=right:{$+$}](ImplicationL2){};
	\node[state,above right=.6cm and .6cm of ImplicationL2](ImplicationL3){$q_1,A$};
	\node[intermediate,below right=.6cm and .6cm of ImplicationL2](ImplicationL4){};
	\node[state,right=of ImplicationL4](ImplicationL5){$q_2,X$};
	\path[->, >=stealth] (ImplicationL1) edge node{$-\vec e_{A\multimap B}$} (ImplicationL2)
	(ImplicationL2) edge (ImplicationL3)
	(ImplicationL2) edge (ImplicationL4)
	(ImplicationL4) edge node{$\vec e_B$}(ImplicationL5);
	\node[rstate,right=2.6cm of
	ImplicationL2,label=left:{($\multimap$\textsf{R})}](ImplicationR1){$q,A\multimap B$};
	\node[state,right=of ImplicationR1](ImplicationR2){$q,B$};
	\path[->, >=stealth] (ImplicationR1) edge node{$\vec e_A$} (ImplicationR2);
	\node[state,right=1.2cm of
	ImplicationR2,label=left:{($\with\mathsf{L}$)}](WithL1){$q,X$};
	\node[intermediate,right=1cm of WithL1](WithL2){};
	\node[state,above right=.4 cm and .4cm of WithL2](WithL3){$q,X$};
	\node[state,below right=.4cm and .4cm of WithL2](WithL4){$q,X$};
	\path[->, >=stealth] (WithL1) edge node{$-\vec e_{A\with B}$} (WithL2)
	(WithL2) edge node{$\vec e_A$} (WithL3)
	(WithL2) edge[swap] node{$\vec e_B$} (WithL4);
	\node[rstate,below= 2cm of
	ImplicationL1,label=left:{($\with\mathsf{R}$)},label=right:{$\,\,\wedge$}](WithR1){$q,A\with B$};
	\node[state,above right=.2cm and .5cm of WithR1](WithR2){$q,A$};
	\node[state,below right=.2cm and .5cm of WithR1](WithR3){$q,B$};
	\path[->, >=stealth] (WithR1.east) edge (WithR2)
	(WithR1.east) edge (WithR3);
	\node[state,right=2cm of WithR1,label=left:{($\oplus\mathsf{L}$)}](PlusL1){$q,X$};
	\node[intermediate,right=1.1cm of PlusL1,label=right:{$\wedge$}](PlusL2){};
	\node[intermediate,above right= .6cm and .6cm of PlusL2](PlusL3){};
	\node[intermediate,below right= .6cm and .6cm of PlusL2](PlusL4){};
	\node[state,right=1.2cm of PlusL2](PlusL5){$q,X$};
	\path[->, >=stealth] (PlusL1) edge node{$-\vec e_{A\oplus B}$} (PlusL2)
	(PlusL2) edge (PlusL3)
	(PlusL2) edge (PlusL4)
	(PlusL3) edge node{$\vec e_A$} (PlusL5)
	(PlusL4) edge[swap] node{$\vec e_B$} (PlusL5);
	\node[rstate,above right= .0cm and 2cm of
	PlusL5,label=left:{($\oplus\mathsf{R1}$)}](PlusR11){$q,A\oplus B$};
	\node[state,right=of PlusR11](PlusR12){$q,A$};
	\path[->, >=stealth] (PlusR11.east) edge node {$\vec 0$} (PlusR12);
	\node[rstate,below=0.4cm of
	PlusR11,label=left:{($\oplus\mathsf{R2}$)}](PlusR21){$q,A\oplus B$};
	\node[state,right=of PlusR21](PlusR22){$q,B$};
	\path[->, >=stealth] (PlusR21.east) edge node {$\vec 0$} (PlusR22);
	\node[state,below left =1.7cm and 1cm of PlusL1,label=left:{($\mathbf{0}\mathsf{L}$)}](ZeroL1){$q,X$};
	\node[intermediate,right=of ZeroL1,label=below:{{\footnotesize $\forall A\in
			S\setminus S_\oc$}}](ZeroL2){};
	\node[state,accepting by double,right=of ZeroL2,inner sep=4pt](ZeroL3){$q_\ell$};
	\path[->, >=stealth] (ZeroL1) edge node{$-\vec e_{\mathbf{0}}$} (ZeroL2)
	(ZeroL2) edge[loop above] node {$-\vec e_A$} ()
	(ZeroL2) edge node{$\vec 0$} (ZeroL3);
	\node[state,right=5.5cm of ZeroL1,label=left:{($\top\mathsf{R}$)}](TopR1){$q,\top$};
	\node[intermediate,right=of TopR1,label=below:{{\footnotesize $\forall A\in
			S\setminus S_\oc$}}](TopR2){};
	\node[state,accepting by double,right=of TopR2,inner sep=4pt](TopR3){$q_\ell$};
	\path[->, >=stealth] (TopR1) edge node{$\vec 0$} (TopR2)
	(TopR2) edge[loop above] node {$-\vec e_A$} ()
	(TopR2) edge node{$\vec 0$} (TopR3);
	\node[rstate,below= 3.0cm of WithR1,label=left:{($\oc\mathsf{W}$)}](Weak1){$q\cup\{\oc A\},X$};
	\node[state,right=of Weak1](Weak2){$q,X$};
	\path[->, >=stealth] (Weak1) edge node{$\vec 0$} (Weak2);
	\node[state,right= 1.2cm of Weak2,label=left:{($\mathsf{func}$)}](F1){$q,\oc A$};
	\node[intermediate,right=of F1,label=below:{{\footnotesize $\forall \oc B \in
			q$}}](F2){};
	\node[state,right=of F2](F3){$\emptyset,A$};
	\path[->, >=stealth] (F1) edge node{$\stackrel{?}{=}\vec 0$} (F2)
	(F2) edge[loop above] node {$\vec e_B$} ()
	(F2) edge node{$\vec 0$} (F3);
	\node[state,right=1.2cm of F3,label=left:{($\mathsf{store}$)}](s1){$q,X$};
	\node[rstate,right=of s1](s2){$q \cup \{\oc A\},X$};
	\path[->, >=stealth] (s1) edge node{$-\vec e_{\oc A}$} (s2);
	\end{tikzpicture}
	\caption{\label{rule3}Rules and intermediate states of $\mathcal{A}^E_F$.
	All formulas range over $S$, $q,q_1,q_2$ range over $\mathcal{P}(S_{\oc})$, $D$ ranges over $S \setminus S_{\oc}$, and $X$ ranges over $S \cup \{\bullet\}$. Small circles stand for intermediate states.}
\end{figure}
We show that there exists an elementary reduction from the provability problem for $\mathbf{IEZW}$ to the lossy reachablity problem for \abvass s. 
Our reduction is a slightly modified version of the (polynomial-space) reduction, given in \cite[Section 4.1.2]{LS15}, from provability in $\mathbf{ILZW}$ to lossy reachability in \abvass.

Let $F$ be an intuitionistic $\mathcal{L}$-formula. 
Let $S$ be the set of subformulas of $F$, $S_{\oc}$ the set of formulas in $S$ of the form $\oc B$, and $\bullet$ a fresh symbol not in $S$. 
For $\Pi \in S \cup \{\varepsilon\}$, we define $\Pi^{\dagger}=A$ if $\Pi=A$, and $\Pi^{\dagger}=\bullet$ otherwise. 
A multiset over $S$ is just a map from $S$ to $\mathbb{N}$, i.e., an element of $\mathbb{N}^S$. 
Given a multiset $m$ over $S$, $m(B)$ denotes the multiplicity of a formula $B$ in $m$. 

Fix an enumeration $F_1,\ldots,F_{d}$ of all the formulas in $S$. 
A multiset $m$ over $S$ can be expressed as $F_1^{m(F_1)},\ldots,F_{d}^{m(F_{d})}$. 
For each multiset $m$ over $S$, we write $\vec v_{m}$ for the vector $\langle m(F_1),\ldots,m(F_{d})\rangle$ in $\mathbb{N}^{d}$. 
In particular, we write $\vec e_B$ for the vector $\vec v_B$ corresponding to a formula $B$ in $S$.
Note that $\vec v_{m}=\vec 0$ if $m$ is the empty multiset.
We write $\sigma(m)$ for the \emph{support} of a multiset $m$, i.e., $\{B \in S \mid m(B)>0\}$. 
We then construct an \abvass\ $\mathcal{A}^E_{F}$ as below: 
\begin{itemize}
	\item The dimension of $\mathcal{A}^E_F$ is $d$ ($=|S|$).
	\item The state space of $\mathcal{A}^E_F$ contains $\mathcal{P}(S_{\oc}) \times (S \cup \{\bullet\})$, a distinguished leaf state $q_{\ell}$, and several intermediate states which are needed for defining the rules for ($\multimap\!\!\mathsf{L}$), ($\with\mathsf{L}$), ($\oplus\mathsf{L}$), ($\otimes\mathsf{L}$), ($\mathbf{0}\mathsf{L}$), ($\top\mathsf{R}$) and ($\mathsf{func}$) in $\mathcal{A}^E_F$ (cf.\ Figure~\ref{rule3}).
	\item The rules and intermediate states of $\mathcal{A}^E_F$ are defined as in Figure \ref{rule3}. 
\end{itemize}

The construction of $\mathcal{A}^E_F$ is quite similar to that of the \abvass\ $\mathcal{A}^I_F$ defined in \cite[Section 4.1.2]{LS15}. 
We stress that $\mathcal{A}^E_F$ has the rules for ($\mathsf{func}$) instead of the rules for ($\oc \mathsf{D}$) and ($\mathsf{\oc P}$) (see Figure~\ref{threerules} in Section~\ref{towerupperflew}), whereas $\mathcal{A}^I_F$ has the rules for ($\oc \mathsf{D}$) and ($\mathsf{\oc P}$) instead of the rules for ($\mathsf{func}$). 
Notice that $\mathcal{A}^E_F$ does not have the rules corresponding to the inference rule of (W) in $\mathbf{ILZW}$. 
The left-weakening rule is implemented by loss rules derived from the lossy semantics for $\mathcal{A}^E_F$.

Let $\Theta,\Gamma \vdash \Pi$ be an intuitionistic $\mathcal{L}$-sequent such that $\Theta$ is a multiset of formulas in $S_{\oc}$, $\Gamma$ is a multiset of formulas in $S\setminus S_{\oc}$, and $\Pi$ is in $S \cup \{\varepsilon\}$. 
It is translated as the configuration $\sigma(\Theta),\Pi^{\dagger},\vec v_{\Gamma}$ in $\mathcal{P}(S_{\oc}) \times (S \cup \{\bullet\}) \times \mathbb{N}^d$. 
We now show the key theorem of this subsection; see Appendix~\ref{appmembertower1} for a detailed proof.

\begin{restatable}{Thm}{iezwtoabvass}\label{iezwtoabvass}
	Let $\Pi$ be in $S\cup \{\varepsilon\}$, $\Theta$ a multiset of formulas in $S_{\oc}$, $\Gamma$ a multiset of formulas in $S\setminus S_{\oc}$. 
	$\Theta,\Gamma \vdash \Pi$ is provable in $\mathbf{IEZW}$ if and only if $\mathcal{A}^{E}_F,\{q_{\ell}\} \judge_{\ell} \sigma(\Theta),\Pi^{\dagger},\bar{\mathsf{v}}_{\Gamma}$.
\end{restatable}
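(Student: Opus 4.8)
The encoding $\mathcal{A}^E_F$ differs from the $\mathcal{A}^I_F$ of Lazi\'c and Schmitz \cite[Section~4.1.2]{LS15} only in the rules handling $\oc$ (the gadget $(\mathsf{func})$ replacing $(\oc\mathsf{D})$ and $(\oc\mathsf{P})$), so the plan is to reuse their analysis for the propositional connectives and to concentrate on the exponential layer. The first step is to record the design invariant behind the translation: a cut-free $\mathbf{IEZW}$-proof of $\Theta',\Gamma'\vdash\Pi$, with $\Theta'$ a multiset over $S_{\oc}$, $\Gamma'$ a multiset over $S\setminus S_{\oc}$ and $\Pi\in S\cup\{\varepsilon\}$, should correspond to a $\{q_\ell\}\times\{\vec 0\}$-leaf-covering lossy deduction tree with root $\sigma(\Theta'),\Pi^\dagger,\vec v_{\Gamma'}$, where passing to the support $\sigma(\Theta')$ is legitimate because $(\oc\mathsf{C})$ renders the multiplicities of bang-formulas irrelevant, and a $(\mathsf{loss})$ move plays the role of $(\mathsf{W})$. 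By Theorem~\ref{cut}, $\mathbf{IEZW}$ enjoys cut-elimination and the surviving rules have the subformula property, so it suffices to consider proofs in which every formula lies in $S$; this is exactly why dimension $d=|S|$ and state space $\mathcal{P}(S_{\oc})\times(S\cup\{\bullet\})$ (together with $q_\ell$ and the intermediate states) suffice, and it is what lets us ignore $(\mathsf{Cut})$, which $\mathcal{A}^E_F$ cannot simulate.

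For the forward direction ($\Rightarrow$) I would induct on the height of a cut-free $\mathbf{IEZW}$-proof. The axioms $(\mathsf{Init})$, $(\mathbf{1}\mathsf{R})$, $(\bot\mathsf{L})$ are matched by the rules reaching $q_\ell$ (e.g.\ $A\vdash A$ with $A\in S\setminus S_{\oc}$ gives the configuration $\emptyset,A,\vec e_A$, whence $(\mathsf{Init1})$ fires $-\vec e_A$ to reach $q_\ell,\vec 0$; if $A\in S_{\oc}$ use $(\mathsf{Init2})$); the rules $(\top\mathsf{R})$ and $(\mathbf{0}\mathsf{L})$ use the loop-then-leaf gadgets, discarding the non-bang part of the context via the $-\vec e_A$ self-loops before reaching $q_\ell$. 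The multiplicative and additive rules are inverted into chains of $\mathcal{A}^E_F$-moves through the relevant intermediate states exactly as in \cite{LS15}. The new cases are the exponential ones: $(\mathsf{W})$ on a bang-formula is matched by $(\oc\mathsf{W})$ and on a non-bang formula by $(\mathsf{loss})$; $(\oc\mathsf{C})$ requires no move at all, since the support is unchanged; and $(\oc\mathsf{F})$, passing from $\Gamma'\vdash A$ to $\oc\Gamma'\vdash\oc A$, is matched by the $(\mathsf{func})$ gadget, whose full zero test checks that the non-bang part of the current context is empty, whose $\vec e_B$-loops (for $\oc B\in q$) regenerate on the counters a multiset with support $q=\sigma(\oc\Gamma')$ (using losses to delete any surplus), and whose exit to $\emptyset,A$ lets the induction hypothesis apply to the premise $\Gamma'\vdash A$. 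One also needs a small normalization lemma, namely that bang-formulas appearing on the counters may always be moved into the set component via $(\mathsf{store})$; this is the analogue of the bookkeeping for $(\oc)$ in \cite{LS15}.

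For the backward direction ($\Leftarrow$) I would induct on the structure of a given $\{q_\ell\}\times\{\vec 0\}$-leaf-covering lossy deduction tree, first grouping maximal chains through intermediate states into single macro-steps, so that each macro-step is one of the rule groups of Figure~\ref{rule3} applied at a genuine configuration $\sigma(\Theta'),\Pi^\dagger,\vec v$. A leaf reaching $q_\ell$ forces $\vec v=\vec 0$ and pins down the shape of the configuration, yielding one of the axioms; each macro-step is inverted into the corresponding $\mathbf{IEZW}$-rule ($(\mathsf{func})$ into $(\oc\mathsf{F})$, $(\oc\mathsf{W})$ into $(\mathsf{W})$, $(\mathsf{loss})$ into $(\mathsf{W})$), and the proofs of the premises supplied by the induction hypothesis are combined. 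The main obstacle, and the place where the most care is needed, is precisely the exponential layer: one must show that abstracting the multiset of bang-hypotheses to its support is both sound (via $(\oc\mathsf{C})$) and complete (via $(\mathsf{W})$ on bang-formulas, together with the fact that after cut-elimination every bang-hypothesis is used only through $(\oc\mathsf{F})$), and that the full zero test in $(\mathsf{func})$ exactly captures the side condition of $(\oc\mathsf{F})$ that the context be of the form $\oc\Gamma'$. I expect the subtlest point to be that a lossy run could a priori ``cheat'' by discarding counters it still needs; but since $(\mathsf{W})$ is admissible in $\mathbf{IEZW}$ this never produces an unprovable conclusion, and this is exactly what makes the lossy semantics --- rather than the exact one --- the right match, as in the treatment of $\mathbf{ILZW}$ in \cite{LS15}.
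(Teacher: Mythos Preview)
Your plan is correct and matches the paper's proof: forward by induction on cut-free $\mathbf{IEZW}$-derivations, backward by induction on a normalised lossy deduction tree, with the propositional cases inherited from \cite{LS15} and attention focused on $(\mathsf{func})$ versus $(\oc\mathsf{F})$. The paper makes your informal ``macro-step grouping'' and ``small normalization lemma'' precise via an explicit \emph{standardization lemma}: first any lossy tree is converted to a \emph{regular} one (losses permuted past unary, split and fork rules, as in \cite[Section~3.2.2]{LS15}), and then to a \emph{standard} one in which $(\mathsf{store})$ is applied as close to the root as possible and no $(\mathsf{loss})$ fires at an intermediate or leaf state; the backward induction is then carried out on standard trees. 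This is exactly the content behind your macro-steps, and it disposes of the one case you leave implicit---a $(\mathsf{loss})$ firing in the middle of a gadget---so that each chain through intermediate states really is a single rule-group instance. One minor slip: in the forward $(\oc\mathsf{F})$ case there is no ``surplus'' to delete; the $\vec e_B$-loops are fired exactly as many times as the multiplicities in the premise context require, after which $(\mathsf{store})$ moves the bang-part into the set component so that the induction hypothesis applies verbatim.
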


In particular, Theorem \ref{iezwtoabvass} guarantees that for any intuitionistic $\mathcal{L}$-formula $F$, $F$ is provable in $\mathbf{IEZW}$ if and only if $\mathcal{A}^{E}_F,\{q_{\ell}\} \judge_{\ell} \emptyset,F,\vec 0$. 
By Theorems~\ref{abvass1} and~\ref{illtoll6}:

\begin{Cor}
	\label{towermembership}
	The provability problems for $\mathbf{IEZW}$ and $\mathbf{ELLW}$ are in \textsc{Tower}.
\end{Cor}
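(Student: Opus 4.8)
The plan is to read Corollary~\ref{towermembership} off from three facts already available: the reduction of Theorem~\ref{iezwtoabvass}, the \tower-completeness of the lossy reachability problem for \abvass s (Theorem~\ref{abvass1}), and the negative translation of Theorem~\ref{illtoll6}. No new idea is needed; the only work is to check that the reductions involved are elementary and to compose them, using that \tower\ is closed under elementary many-one reductions.

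First I would treat $\mathbf{IEZW}$ directly. Given an intuitionistic $\mathcal{L}$-formula $F$, compute the subformula set $S$, fix an enumeration $F_1,\dots,F_d$ of $S$, and build the \abvass\ $\mathcal{A}^E_F$ exactly as specified in Figure~\ref{rule3}. This construction is entirely local: the dimension is $|S|\le|F|$, the state space is $\mathcal{P}(S_{\oc})\times(S\cup\{\bullet\})$ together with the leaf state $q_{\ell}$ and boundedly many intermediate states per rule, and every rule is dictated by the outermost connective of some element of $S$; hence $F\mapsto\mathcal{A}^E_F$ is plainly computable in elementary time (indeed in polynomial space, as in \cite[Section~4.1.2]{LS15}). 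Instantiating Theorem~\ref{iezwtoabvass} with $\Theta=\Gamma=\emptyset$ and $\Pi=F$ gives: $F$ is provable in $\mathbf{IEZW}$ if and only if $\mathcal{A}^E_F,\{q_{\ell}\}\judge_{\ell}\emptyset,F,\vec 0$, which is literally an instance of lossy reachability for \abvass s. Since that problem is in \tower\ (Theorem~\ref{abvass1}) and \tower\ is closed under elementary reductions, provability in $\mathbf{IEZW}$ is in \tower.

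For $\mathbf{ELLW}$ I would precompose with the negative translation. By Theorem~\ref{illtoll6}, a classical $\mathcal{L}_C$-sequent $\vdash\Gamma$ is provable in $\mathbf{ELLW}$ if and only if $\Gamma^{\jump{\bot}}\vdash$ is provable in $\mathbf{IEZW}$; reading off the defining clauses of $(\_)^{\jump{\bot}}$ shows $\lvert\Gamma^{\jump{\bot}}\rvert$ is linear in $\lvert\Gamma\rvert$, so this step is polynomial-time. It remains to feed the left-sided, empty-stoup sequent $\Gamma^{\jump{\bot}}\vdash$ into the previous reduction. This is routine: split $\Gamma^{\jump{\bot}}$ into its $\oc$-prefixed part $\Theta$ and its remainder $\Gamma'$ and apply Theorem~\ref{iezwtoabvass} with stoup $\varepsilon$ (the construction of $\mathcal{A}^E_{(\cdot)}$ relativises verbatim to the subformula set of a finite multiset of formulas); alternatively, stay with a single formula by noting that $\Delta\vdash$ is provable in $\mathbf{IEZW}$ if and only if $\vdash(\bigotimes\Delta)\multimap\bot$ is --- curry every antecedent using invertibility of ($\multimap$R) and ($\otimes$L), with $\bigotimes\emptyset=\mathbf 1$, and pass back and forth through the ($\bot$L) axiom and ($\bot$R). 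Either way one obtains an elementary many-one reduction from provability in $\mathbf{ELLW}$ to lossy reachability for \abvass s, so provability in $\mathbf{ELLW}$ is in \tower\ as well.

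The only step that needs any care is confirming that $F\mapsto\mathcal{A}^E_F$ and $(\_)^{\jump{\bot}}$ are elementary, since \tower-membership propagates only along elementary reductions; both manifestly are. All the real content sits in Theorem~\ref{iezwtoabvass} --- the soundness and completeness of simulating $\mathbf{IEZW}$ proof search by lossy \abvass\ deductions, with the loss rules playing the role of left-weakening and the $(\mathsf{func})$ gadget the role of functorial promotion --- and that (somewhat delicate) argument is carried out separately in Appendix~\ref{appmembertower1}; so for the corollary itself there is essentially no obstacle left.
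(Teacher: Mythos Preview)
Your proposal is correct and follows essentially the same route as the paper: the paper's entire argument for the corollary is the one-line ``By Theorems~\ref{abvass1} and~\ref{illtoll6}'' after Theorem~\ref{iezwtoabvass}, and you have simply unpacked this, checking elementarity of $F\mapsto\mathcal{A}^E_F$ and of $(\_)^{\jump{\bot}}$ and handling the empty-stoup sequent $\Gamma^{\jump{\bot}}\vdash$ explicitly. One small quibble: the state space contains $\mathcal{P}(S_{\oc})$, so the construction is exponential rather than polynomial-space; this is harmless since elementary reductions suffice, and the paper itself only claims an elementary reduction here.
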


We stress that the \textsc{Tower} upper bound also holds for fragments of $\mathbf{IEZW}$ and $\mathbf{ELLW}$, e.g., the $\{\multimap,\oc\}$-fragment of $\mathbf{IEZW}$. 

\subsection{Tower upper bound for deducibility in FLew and related systems}\label{towerupperflew}
We first describe the notion of \emph{$\oc$-prenex sequent} which originates in~\cite[Section~2]{Ter02}. 
Let $\mathcal{K}$ be a language such that $\bot \in \mathcal{K} \subseteq \mathcal{L}$.
A \emph{$\oc$-prenex $\mathcal{K}$-sequent} is an intuitionistic sequent of the form $\oc \Gamma,\Delta \vdash \Pi$, where $\Gamma$ and $\Delta$ are finite multisets of intuitionistic $\mathcal{K}$-formulas, and $\Pi$ is an intuitionistic $\mathcal{K}$-formula or the empty multiset. 
Similarly, let $\mathcal{K}$ be a sublanguage of $\mathcal{L}_C$. 
A \emph{$\wn$-prenex $\mathcal{K}$-sequent} is a right-hand sided sequent of the form $\vdash \wn \Gamma,\Delta$, where $\Gamma$ and $\Delta$ are finite multisets of classical $\mathcal{K}$-formulas. 
We write $\Gamma^n$ for the multiset sum of $n$ copies of $\Gamma$ for each $n \geq 0$. 

\begin{Lem}\label{ilzwtoflei}
	Let $\oc \Gamma,\Delta \vdash \Pi$ be a $\oc$-prenex $\{\otimes,\multimap,\with,\oplus,\mathbf{1},\bot\}$-sequent. 
	\begin{bracketenumerate}
		\item If $\oc \Gamma,\Delta \vdash \Pi$ is provable in $\mathbf{ILZW}$, then $\Gamma^n,\Delta \vdash \Pi$ is provable in $\mathbf{FL}_{\mathbf{ei}}$ for some $n \geq 0$.
		\item If $\oc \Gamma,\Delta \vdash \Pi$ is provable in $\mathbf{ILZW}'$, then $\Gamma^n,\Delta \vdash \Pi$ is provable in $\mathbf{FL}_{\mathbf{ew}}$ for some $n \geq 0$.
	\end{bracketenumerate}
\end{Lem}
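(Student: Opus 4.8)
The plan is to prove both items simultaneously by induction on the height of a cut-free proof (which exists by Theorem~\ref{cut}, applied to $\mathbf{ILZW}$ and to $\mathbf{ILZW}'$; note $\mathbf{ILZW}'$ is obtained from $\mathbf{ILZW}$ by adding (W'), and cut-free $\mathbf{ILZW}'$-proofs are cut-free $\mathbf{ILZW}$-proofs with possible (W') steps, so the two cases differ only in that (W') must be handled). The key structural observation is that a $\oc$-prenex sequent $\oc\Gamma,\Delta\vdash\Pi$ in the $\{\otimes,\multimap,\with,\oplus,\mathbf{1},\bot\}$-language contains $\oc$ \emph{only} at the top level of the $\oc\Gamma$ part; hence in a cut-free proof the only rules that can ever act on a $\oc$-formula are ($\oc$D), ($\oc$C), and (W) (the rule ($\oc$P) cannot apply since its conclusion $\oc\Gamma'\vdash\oc A$ would need a principal formula $\oc A$, which is not a $\{\otimes,\multimap,\with,\oplus,\mathbf{1},\bot\}$-formula, and never arises; similarly no right-rule introduces a $\oc$). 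So I would strengthen the induction hypothesis to: \emph{every} sequent occurring in a cut-free $\mathbf{ILZW}$-proof (resp.\ $\mathbf{ILZW}'$-proof) of $\oc\Gamma,\Delta\vdash\Pi$ is itself a $\oc$-prenex $\{\otimes,\multimap,\with,\oplus,\mathbf{1},\bot\}$-sequent whose $\oc$-formulas all lie in $\oc\Gamma$ (up to multiplicity), and then the induction produces, for each such sequent $\oc\Gamma',\Delta'\vdash\Pi'$, a number $n'$ and an $\mathbf{FL}_{\mathbf{ei}}$- (resp.\ $\mathbf{FL}_{\mathbf{ew}}$-) proof of $\Gamma'^{\,n'},\Delta'\vdash\Pi'$.

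For the inductive step I would go through the rules. For a logical rule of $\{\otimes,\multimap,\with,\oplus,\mathbf{1},\bot\}$ acting on $\Delta$ or $\Pi$: the $\oc\Gamma$ context is merely carried along, so I apply the induction hypothesis to the premise(s), obtaining $n'$ (for one-premise rules) or $n_1,n_2$ (for two-premise rules, where I set $n'=n_1+n_2$ and use weakening-by-copies — here the left-weakening already present in $\mathbf{FL}_{\mathbf{ei}}$, i.e.\ the rule (W), lets me pad $\Gamma^{n_i}$ up to $\Gamma^{n'}$), and then apply the same $\mathbf{FL}_{\mathbf{ei}}$-rule. For ($\oc$D) with principal formula $\oc A\in\oc\Gamma$, whose premise is $A,\oc\Gamma'',\Delta\vdash\Pi$ with $\oc\Gamma=\oc A,\oc\Gamma''$: by IH I get $\Gamma''^{\,m},A,\Delta\vdash\Pi$; since $\Gamma=A,\Gamma''$, I take $n'=\max(m,1)$, pad via (W) to get $\Gamma''^{\,n'-1},A,\Delta\vdash\Pi$, and one further copy gives $A,\Gamma''$ contained in $\Gamma^{n'}$ — more precisely $\Gamma^{n'}=A^{n'},\Gamma''^{\,n'}\supseteq A,\Gamma''^{\,n'-1},\dots$, so again using (W) to add the missing formulas I obtain $\Gamma^{n'},\Delta\vdash\Pi$. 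For ($\oc$C) with premise $\oc A,\oc A,\oc\Gamma'',\Delta\vdash\Pi$ and $\oc\Gamma=\oc A,\oc\Gamma''$: the premise is $\oc$-prenex with underlying multiset $\Gamma^+=A,A,\Gamma''$, IH gives $\mathbf{FL}_{\mathbf{ei}}\vdash (\Gamma^+)^m,\Delta\vdash\Pi$; since every formula of $\Gamma^+$ is a formula of $\Gamma$ with $\le$ multiplicity when $\Gamma^+$ is compared with $\Gamma^2$, I take $n'=2m$ and pad with (W). For (W) on the left adding a formula $B$: if $B$ is a $\oc$-formula it goes into $\oc\Gamma$, increasing $\Gamma$'s underlying multiset by $B'$ (where $B=\oc B'$), and I simply pad the IH proof with $\mathbf{FL}_{\mathbf{ei}}$'s (W); if $B$ is $\oc$-free it goes into $\Delta$ and again one application of (W) suffices. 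Axioms (Init), ($\mathbf 1$R), ($\bot$L), ($\top$R)$\!$—wait, $\top\notin\mathcal K$, so the relevant axioms are (Init), ($\mathbf 1$R), ($\bot$L), ($\mathbf 0$L)$\!$—also $\mathbf 0\notin\mathcal K$; so (Init), ($\mathbf 1$R), ($\bot$R as a rule) and ($\bot$L) are the base cases, each of which is already an $\mathbf{FL}_{\mathbf{ei}}$-axiom/rule with $n=0$ (weakening away the empty $\oc\Gamma$). Finally, for item~(2), the only new rule is (W') with premise $\oc\Gamma,\Delta\vdash\varepsilon$ and conclusion $\oc\Gamma,\Delta\vdash A$: here $A$ must be a $\{\otimes,\multimap,\with,\oplus,\mathbf 1,\bot\}$-formula (it is a subformula of the endsequent's $\Pi$), IH gives $\mathbf{FL}_{\mathbf{ew}}\vdash\Gamma^n,\Delta\vdash\varepsilon$, and one application of $\mathbf{FL}_{\mathbf{ew}}$'s own (W') yields $\Gamma^n,\Delta\vdash A$.

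The step I expect to be the main obstacle is the bookkeeping around ($\oc$C) and, more generally, ensuring that a \emph{single uniform} exponent $n'$ works after combining two subproofs whose exponents $n_1,n_2$ differ: the fix is systematic use of the left-weakening rule (which is present in both $\mathbf{FL}_{\mathbf{ei}}$ and $\mathbf{FL}_{\mathbf{ew}}$) to inflate $\Gamma^{n_i}$ to $\Gamma^{\max(n_1,n_2)}$ or to $\Gamma^{n_1+n_2}$ as needed, so no genuine difficulty arises — but it must be stated carefully, since without contraction we cannot \emph{decrease} multiplicities, only match them from below. A secondary point worth spelling out is the invariant that no $\oc$ ever appears except at the top of left-hand formulas in $\oc\Gamma$; this is what rules out ($\oc$P) and what guarantees that translating $\oc\Gamma$ to $\Gamma^{n}$ is the only interaction with the modality we must track. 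With these two points handled, the induction closes and both (1) and (2) follow.
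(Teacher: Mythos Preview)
Your proposal is correct and follows essentially the same approach as the paper: induction on the height of a cut-free proof, case analysis on the last rule, and use of left-weakening to pad the exponents (the paper takes $n'+n''$ in the two-premise case, exactly as you do). Your explicit remark that $(\oc\mathrm{P})$ can never occur because the right-hand side stays $\oc$-free throughout the cut-free proof is a useful clarification that the paper leaves implicit in its ``remaining cases are similar''; otherwise the arguments coincide.
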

\begin{proof}
    The proof of Statement (1) proceeds by induction on the size of the cut-free proof of $\oc \Gamma,\Delta \vdash \Pi$ in $\mathbf{ILZW}$. 
	We perform a case analysis, depending on which inference rule is applied last.
	
	We consider only the case of ($\multimap$L).
	If $\oc \Gamma,\oc \Sigma,A \multimap B,\Delta,\Xi\vdash \Pi$ is obtained from $\oc \Gamma,\Delta \vdash A$ and $\oc \Sigma,B,\Xi \vdash \Pi$ by an application of ($\multimap$L), then by the induction hypothesis, $\Gamma^{n'},\Delta \vdash A$ is provable in $\mathbf{FL}_{\mathbf{ei}}$ for some $n'$, and $\Sigma^{n''},B,\Xi \vdash \Pi$ is provable in $\mathbf{FL}_{\mathbf{ei}}$ for some $n''$. 
	Applying ($\multimap$L) we obtain a proof of $\Gamma^{n'},\Sigma^{n''},A \multimap B,\Delta,\Xi\vdash \Pi$ in $\mathbf{FL}_{\mathbf{ei}}$. 
	Note that $n'$ is not always equal to $n''$.
    However, we may unify them using the rule of (W); thus $(\Gamma,\Sigma)^{n'+n''},A \multimap B,\Delta,\Xi\vdash \Pi$ is provable in $\mathbf{FL}_{\mathbf{ei}}$. 
	The remaining cases are similar.
 
    We can show Statement (2) similarly.
\end{proof}

Similarly to the above theorem, one can also show the following:
\begin{Lem}\label{llwtoinflei}
	Let $\vdash \wn \Gamma,\Delta$ be a $\wn$-prenex $\{\otimes,\parr,\with,\oplus,\mathbf{1},\bot\}$-sequent. 
	If $\vdash \wn \Gamma,\Delta$ is provable in $\mathbf{LLW}$, then $\vdash \Gamma^n,\Delta$ is provable in $\mathbf{InFL}_{\mathbf{ew}}$ for some $n \geq 0$.
\end{Lem}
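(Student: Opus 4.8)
The plan is to transplant the argument of Lemma~\ref{ilzwtoflei} to the one-sided calculus, combining cut-elimination (Theorem~\ref{cut}) with the subformula property of cut-free $\mathbf{LLW}$-proofs. First I would fix a cut-free $\mathbf{LLW}$-proof $\mathcal{D}$ of $\vdash\wn\Gamma,\Delta$ and carry out a subformula analysis. Every formula occurring in $\mathcal{D}$ is a subformula of a formula of $\vdash\wn\Gamma,\Delta$; since the formulas of $\Gamma$ and $\Delta$ belong to the language $\{\otimes,\parr,\with,\oplus,\mathbf{1},\bot\}$ and thus contain neither $\wn$ nor $\oc$, the only modal formulas that can occur in $\mathcal{D}$ are the formulas $\wn B$ with $B$ a formula of $\Gamma$. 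In particular no $\oc$-formula occurs anywhere in $\mathcal{D}$, so the rule $(\oc)$ of Figure~\ref{inf2} is never applied, and every sequent of $\mathcal{D}$ has the shape $\vdash\wn\Theta,\Lambda$, where $\Theta$ is a finite multiset of formulas drawn from the support of $\Gamma$ and $\Lambda$ is a finite multiset of $\{\otimes,\parr,\with,\oplus,\mathbf{1},\bot\}$-formulas. It then suffices to prove, by induction on the height of a cut-free $\mathbf{LLW}$-proof, the following strengthened claim: whenever $\vdash\wn\Theta,\Lambda$ with $\Theta,\Lambda$ of this restricted form is cut-free provable in $\mathbf{LLW}$, there is an $n\ge 0$ with $\vdash\Theta^{n},\Lambda$ provable in $\mathbf{InFL}_{\mathbf{ew}}$; the lemma is the special case $\Theta=\Gamma$, $\Lambda=\Delta$.

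The inductive step is a case analysis on the last rule. For the axioms $(\mathrm{Init})$ and $(\mathbf{1})$ one necessarily has $\Theta=\emptyset$ (for $(\mathrm{Init})$, because $A=\wn B$ would force the $\oc$-formula $A^{\bot}=\oc B^{\bot}$ into $\mathcal{D}$, which was excluded), and since the fragment $\{\otimes,\parr,\with,\oplus,\mathbf{1},\bot\}$ is closed under de~Morgan duality both $\vdash A,A^{\bot}$ and $\vdash\mathbf{1}$ are already $\mathbf{InFL}_{\mathbf{ew}}$-axioms, so $n=0$ works. The rules $(\bot),(\parr),(\oplus 1),(\oplus 2)$ touch only the $\wn$-free part: apply the induction hypothesis to the premise and reapply the same rule with the same $n$. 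For the two-premise rules $(\otimes)$ and $(\with)$ I would argue exactly as in Lemma~\ref{ilzwtoflei}: the two induction hypotheses may deliver different exponents $n_{1},n_{2}$, which I unify to $N=\max(n_{1},n_{2})$ by inserting the missing copies of the $\wn$-context using the weakening rule $(W)$ of $\mathbf{InFL}_{\mathbf{ew}}$ (for $(\otimes)$, whose $\wn$-context is split as $\wn\Theta_{1},\wn\Theta_{2}$, one weakens each branch separately up to $\Theta_{1}^{N}$ and $\Theta_{2}^{N}$), and then apply the rule.

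The remaining cases manipulate the $\wn$-context, and they are precisely where the exponent has to be revised upward. If the last rule is $(\wn)$ introducing a fresh $\wn B$, the premise is $\vdash\wn\Theta,\Lambda,B$ with $\Lambda,B$ its $\wn$-free part; the induction hypothesis gives $\vdash\Theta^{m},\Lambda,B$ in $\mathbf{InFL}_{\mathbf{ew}}$, and weakening in $B^{\,n-1}$ and $\Theta^{\,n-m}$ with $n=\max(m,1)$ yields $\vdash(\Theta,B)^{n},\Lambda$. If $(W)$ introduces a formula $\wn B$, the induction hypothesis gives $\vdash\Theta^{m},\Lambda$ and weakening in $B^{m}$ gives $\vdash(\Theta,B)^{m},\Lambda$; a $(W)$ introducing a $\wn$-free formula is immediate. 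If the last rule is $(\wn\mathrm{C})$ contracting $\wn B,\wn B$, the premise's $\wn$-context is $\wn\Theta,\wn B,\wn B$, the induction hypothesis gives $\vdash\Theta^{m},B^{2m},\Lambda$, and weakening $\Theta$ up to $2m$ copies gives $\vdash(\Theta,B)^{2m},\Lambda$, so $n=2m$ works. I expect the main obstacle to be exactly this bookkeeping, together with a careful justification of the subformula analysis: one must make sure that every occurrence of $\wn$ stays in the prenex zone, so that $\mathbf{InFL}_{\mathbf{ew}}$---which has neither $\wn$ nor $\oc$---is the right target system and the induction hypothesis really does apply to every premise, and one must observe that in the contraction case the exponent can only be raised, never lowered, since $\mathbf{InFL}_{\mathbf{ew}}$ has no contraction rule. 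As in Lemma~\ref{ilzwtoflei}, the converse implication is routine, but only the ``if'' direction of the statement is needed.
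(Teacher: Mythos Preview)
Your proposal is correct and follows exactly the approach the paper intends: it states the lemma is proved ``similarly'' to Lemma~\ref{ilzwtoflei}, i.e., by induction on a cut-free $\mathbf{LLW}$-proof with a case analysis on the last rule, using (W) to reconcile the exponents in the branching cases. Your preliminary subformula analysis (to rule out $(\oc)$ and non-prenex $\wn$-occurrences) makes explicit what the paper leaves implicit, and your case treatment---including the $(\wn)$, $(W)$, and $(\wn\mathrm{C})$ cases where the exponent is bumped up---is sound.
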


Lemmas~\ref{ilzwtoflei}~and~\ref{llwtoinflei} are affine analogues of \cite[Proposition~2.6]{Ter02}. 
Interestingly, a similar idea is found in the proof of the \emph{local deduction theorem} for $\mathbf{FL}_{\mathbf{ew}}$; see \cite[Corollary~2.15]{GO06}. 
The following lemma, which is inspired by~\cite[Lemmas 3.2 and 3.3]{LMSS92}, provides a very simple reduction from $\mathbf{FL}_{\mathbf{ei}}$ deducibility (resp.\ $\mathbf{FL}_{\mathbf{ew}}$ deducibility) into $\mathbf{ILZW}$ provability (resp.\ $\mathbf{ILZW}'$ provability). 

\begin{Lem}\label{fleitoilzw}
	Let $\Phi$ be a finite set of intuitionistic $\{\otimes,\multimap,\with,\oplus,\mathbf{1},\bot\}$-formulas and $\Gamma \vdash \Pi$ an intuitionistic $\{\otimes,\multimap,\with,\oplus,\mathbf{1},\bot\}$-sequent. 
	\begin{bracketenumerate}
		\item $\Gamma \vdash \Pi$ is provable in $\mathbf{FL_{\mathbf{ei}}}[\Phi]$ if and only if $\oc \Phi,\Gamma \vdash \Pi$ is provable in $\mathbf{ILZW}$.
		\item $\Gamma \vdash \Pi$ is provable in $\mathbf{FL}_{\mathbf{ew}}[\Phi]$ if and only if $\oc \Phi,\Gamma \vdash \Pi$ is provable in $\mathbf{ILZW}'$.
	\end{bracketenumerate}
\end{Lem}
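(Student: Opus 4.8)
The plan is to prove the two equivalences in Statement~(1) and then observe that Statement~(2) is obtained by the same argument with $\mathbf{ILZW}$, $\mathbf{FL}_{\mathbf{ei}}$ and Lemma~\ref{ilzwtoflei}(1) replaced throughout by $\mathbf{ILZW}'$, $\mathbf{FL}_{\mathbf{ew}}$ and Lemma~\ref{ilzwtoflei}(2). The right-to-left implication of~(1) is essentially immediate from Lemma~\ref{ilzwtoflei}. Indeed, $\oc\Phi,\Gamma \vdash \Pi$ is a $\oc$-prenex $\{\otimes,\multimap,\with,\oplus,\mathbf 1,\bot\}$-sequent: take the $\oc$-prefixed multiset to be $\Phi$ and the unprefixed one to be $\Gamma$. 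Hence, if $\oc\Phi,\Gamma \vdash \Pi$ is provable in $\mathbf{ILZW}$, then $\Phi^n,\Gamma \vdash \Pi$ is provable in $\mathbf{FL}_{\mathbf{ei}}$ for some $n \geq 0$ by Lemma~\ref{ilzwtoflei}(1). Since $\vdash B$ is an initial sequent of $\mathbf{FL}_{\mathbf{ei}}[\Phi]$ for every $B \in \Phi$, I would then strip off the whole multiset $\Phi^n$ by applying the rule (Cut) once for each of its $n|\Phi|$ occurrences, arriving at a proof of $\Gamma \vdash \Pi$ in $\mathbf{FL}_{\mathbf{ei}}[\Phi]$ (when $n=0$ there is nothing to remove, and $\Gamma \vdash \Pi$ is already provable in $\mathbf{FL}_{\mathbf{ei}}$).

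For the left-to-right implication I would prove, by induction on the height of a proof in $\mathbf{FL}_{\mathbf{ei}}[\Phi]$, the slightly more general statement that whenever $\Delta \vdash \Sigma$ is provable in $\mathbf{FL}_{\mathbf{ei}}[\Phi]$, the sequent $\oc\Phi,\Delta \vdash \Sigma$ is provable in $\mathbf{ILZW}$; the lemma is the instance $\Delta=\Gamma$, $\Sigma=\Pi$. At a leaf that is an initial sequent of $\mathbf{FL}_{\mathbf{ei}}$ (one of (Init), ($\mathbf 1$R), ($\bot$L)), I prepend $\oc\Phi$ using the rule (W). At a leaf that is one of the added axioms $\vdash B$ with $B \in \Phi$, I first derive $\oc B \vdash B$ from $B \vdash B$ by ($\oc$D), then introduce the remaining formulas of $\oc\Phi$ by (W). For an inference whose rule is common to $\mathbf{FL}_{\mathbf{ei}}$ and $\mathbf{ILZW}$: if it is unary (including ($\bot$R)) or binary with a shared antecedent context (($\with$R), ($\oplus$L)), I apply the induction hypothesis to the premise(s) and re-apply the same rule, since the copies of $\oc\Phi$ line up; if it is binary with split contexts ((Cut), ($\multimap$L), ($\otimes$R)), the induction hypothesis yields two copies of $\oc\Phi$ in the conclusion, which I merge into one by finitely many applications of ($\oc$C). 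All the structural rules invoked—(W), ($\oc$D), ($\oc$C)—belong to $\mathbf{ILZW}$, so the induction closes.

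For Statement~(2) the same induction works verbatim: $\mathbf{ILZW}'$ still contains (W), ($\oc$D) and ($\oc$C), and the only additional inference available in $\mathbf{FL}_{\mathbf{ew}}$, namely right-weakening (W'), is also an inference of $\mathbf{ILZW}'$, so it is handled like any other unary rule; the converse direction uses Lemma~\ref{ilzwtoflei}(2) and cuts against the axioms exactly as above. I do not expect a genuine obstacle here—the argument is routine once Lemma~\ref{ilzwtoflei} is available. The only points that require a little care are the bookkeeping of the $\oc$-context in the forward direction (ensuring that duplicated copies of $\oc\Phi$ are always recombined via ($\oc$C)) and the trivial but necessary check that every rule used in each simulation actually belongs to the target calculus.
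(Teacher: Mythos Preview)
Your proposal is correct and follows essentially the same approach as the paper: the \emph{if} direction via Lemma~\ref{ilzwtoflei} followed by cuts against the axioms, and the \emph{only-if} direction by induction on the height of the $\mathbf{FL}_{\mathbf{ei}}[\Phi]$-proof. You have simply spelled out the inductive cases (in particular the use of $(\oc\mathrm{C})$ to merge duplicated $\oc\Phi$-contexts after context-splitting rules) that the paper leaves implicit.
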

\begin{proof}
	We only prove the first statement, the proof of the second one being similar. 
	For the \emph{if} part, let us suppose that $\oc \Phi,\Gamma \vdash \Pi$ is provable in $\mathbf{ILZW}$.
	Since $\oc \Phi,\Gamma \vdash \Pi$ is a $\oc$-prenex $\{\otimes,\multimap,\with,\oplus,\mathbf{1},\bot\}$-sequent, $\Phi^n,\Gamma \vdash \Pi$ is provable in $\mathbf{FL_{\mathbf{ei}}}$ for some $n$, by Lemma~\ref{ilzwtoflei}. 
	Since $\vdash B$ is an initial sequent of $\mathbf{FL_{\mathbf{ei}}}[\Phi]$ for each $B \in \Phi$, we can construct a proof of $\Gamma \vdash \Pi$ in $\mathbf{FL_{\mathbf{ei}}}[\Phi]$ by several applications of (Cut).
	The \emph{only-if} part follows by induction on the height of the proof of $\Gamma \vdash \Pi$ in $\mathbf{FL}_{\mathbf{ei}}[\Phi]$. 
\end{proof}

In the same way as before, the deducibility problems for $\mathbf{FL}^+_{\mathbf{ei}}$ and $\mathbf{BCK}$ are also reduced to the provability problem for $\mathbf{ILZW}$. 
Furthermore, one can show that there is also a straightforward reduction from $\mathbf{InFL}_{\mathbf{ew}}$ deducibility to $\mathbf{LLW}$ provability, using Lemma~\ref{llwtoinflei}:

\begin{Lem}\label{infleitollw}
	Let $\Phi$ be a finite set of classical $\{\otimes,\parr,\with,\oplus,\mathbf{1},\bot\}$-formulas and $\vdash \Gamma$ a classical $\{\otimes,\parr,\with,\oplus,\mathbf{1},\bot\}$-sequent.
	$\vdash\Gamma$ is provable in $\mathbf{InFL}_{\mathbf{ew}}[\Phi]$ if and only if $\vdash \wn \Phi^{\bot},\Gamma$ is provable in $\mathbf{LLW}$. 
\end{Lem}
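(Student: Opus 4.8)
The plan is to transpose the proof of Lemma~\ref{fleitoilzw} to the classical, one-sided setting, replacing $\oc$-prefixes by $\wn$-prefixes and invoking Lemma~\ref{llwtoinflei} in place of Lemma~\ref{ilzwtoflei}. The two implications are treated separately.

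For the \emph{if} direction, I would first note that, since the de Morgan dual of a classical $\{\otimes,\parr,\with,\oplus,\mathbf{1},\bot\}$-formula is again of that form, the sequent $\vdash \wn\Phi^{\bot},\Gamma$ is a $\wn$-prenex $\{\otimes,\parr,\with,\oplus,\mathbf{1},\bot\}$-sequent (with $\wn$-prefixed part $\Phi^{\bot}$ and unprefixed part $\Gamma$). Assuming it is provable in $\mathbf{LLW}$, Lemma~\ref{llwtoinflei} supplies an $n\geq 0$ with $\vdash (\Phi^{\bot})^{n},\Gamma$ provable in $\mathbf{InFL}_{\mathbf{ew}}$. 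Since $\vdash C$ is an initial sequent of $\mathbf{InFL}_{\mathbf{ew}}[\Phi]$ for each $C\in\Phi$, a sequence of (Cut)s --- one per occurrence of a formula in $(\Phi^{\bot})^{n}$, each occurrence being $C^{\bot}$ for some $C\in\Phi$ --- strips off $(\Phi^{\bot})^{n}$ and leaves a proof of $\vdash\Gamma$. Here (Cut) is a primitive rule of $\mathbf{InFL}_{\mathbf{ew}}[\Phi]$, so no appeal to cut-elimination (Theorem~\ref{cut}) is needed, although one could use it to render the resulting proof cut-free.

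For the \emph{only-if} direction, I would induct on the height of a proof of $\vdash\Gamma$ in $\mathbf{InFL}_{\mathbf{ew}}[\Phi]$, proving that $\vdash\wn\Phi^{\bot},\Gamma$ is provable in $\mathbf{LLW}$. If the last step is an added initial sequent $\vdash C$ with $C\in\Phi$, start from the $\mathbf{LLW}$-axiom $\vdash C,C^{\bot}$, apply $(\wn)$ to get $\vdash C,\wn C^{\bot}$, and append the remaining members of $\wn\Phi^{\bot}$ by (W). If it is an $\mathbf{InFL}_{\mathbf{ew}}$-axiom ($\vdash A,A^{\bot}$, $\vdash\mathbf{1}$, or $\vdash\Gamma',\top$), weakening suffices. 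For a unary rule, apply the induction hypothesis to the premise and reapply the rule, the side formulas $\wn\Phi^{\bot}$ being passive; for the additive binary rule $(\with)$ the two premises share their context, so the rule reapplies directly to the translated premises; for a multiplicative binary rule ($(\otimes)$ or (Cut)) reapplying produces two copies of $\wn\Phi^{\bot}$, which are merged into one by $|\Phi|$ applications of ($\wn$C). No promotion case arises, since $\mathbf{InFL}_{\mathbf{ew}}$ has no exponential connectives.

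The one delicate point is exactly this last manoeuvre: $\mathbf{LLW}$ offers unrestricted weakening but contraction only on $\wn$-formulas, which is precisely why the auxiliary axioms $\Phi^{\bot}$ must be wrapped in $\wn$ for the construction to go through, and hence why the target sequent has the stated shape. Beyond the routine case analysis, I do not expect any genuine obstacle.
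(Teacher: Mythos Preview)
Your proposal is correct and follows exactly the approach the paper intends: it transposes the proof of Lemma~\ref{fleitoilzw} to the one-sided classical setting, using Lemma~\ref{llwtoinflei} for the \emph{if} direction and a straightforward induction on proof height for the \emph{only-if} direction (with $(\wn\mathrm{C})$ handling the duplicated $\wn\Phi^{\bot}$ after multiplicative binary rules). The paper itself gives no detailed proof and simply points to Lemma~\ref{llwtoinflei} and the analogy with Lemma~\ref{fleitoilzw}, so your write-up is in fact more explicit than the paper's own treatment.
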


Recall that the provability problems for $\mathbf{ILZW}$ and $\mathbf{LLW}$ are both in \textsc{Tower} by Theorem~\ref{logic}. 
We obtain:

\begin{Cor}
	\label{towerdeducibility}
	The following decision problems are in \ts{Tower}.
	\begin{itemize}
	    \item the deducibility problem for $\mathbf{BCK}$, 
	    \item the deducibility problem for $\mathbf{FL}^+_{\mathbf{ei}}$, 
	    \item the deducibility problem for $\mathbf{FL}_{\mathbf{ei}}$,
	    \item the deducibility problem for $\mathbf{InFL}_{\mathbf{ew}}$.
	\end{itemize}
\end{Cor}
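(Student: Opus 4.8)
The plan is to obtain Corollary~\ref{towerdeducibility} as a direct consequence of the reductions in Lemmas~\ref{fleitoilzw} and~\ref{infleitollw} (together with the analogous reductions for $\mathbf{FL}^+_{\mathbf{ei}}$ and $\mathbf{BCK}$ noted immediately after Lemma~\ref{fleitoilzw}), the \textsc{Tower}-membership of provability in $\mathbf{ILZW}$ and $\mathbf{LLW}$ from Theorem~\ref{logic}, and the closure of \textsc{Tower} under elementary many-one reductions. The only work is to check that the composite maps are genuinely elementary (in fact polynomial-time) and that they land in the provability \emph{problem} as stated, which takes a single formula rather than a sequent as input.

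First I would handle $\mathbf{FL}_{\mathbf{ei}}$, $\mathbf{FL}^+_{\mathbf{ei}}$ and $\mathbf{BCK}$. Given an instance $\Phi \cup \{F\}$ of the relevant deducibility problem, Lemma~\ref{fleitoilzw}(1) (and its stated variants) says that $\vdash F$ is provable in the logic augmented with $\Phi$ iff the sequent $\oc\Phi \vdash F$ is provable in $\mathbf{ILZW}$. Writing $\Phi=\{B_1,\dots,B_k\}$, I then pass from this sequent to the single formula $\oc B_1 \multimap (\oc B_2 \multimap (\cdots \multimap (\oc B_k \multimap F)))$. The two provability questions coincide by the invertibility of $(\multimap\mathrm{R})$ in $\mathbf{ILZW}$: the forward direction is $k$ applications of $(\multimap\mathrm{R})$, and the backward direction follows from cut-elimination (Theorem~\ref{cut}), since from a proof of $\Gamma \vdash A\multimap B$ one recovers a proof of $A,\Gamma \vdash B$ by cutting against the derivable sequent $A\multimap B, A \vdash B$. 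The composite map $\Phi\cup\{F\}\mapsto \oc B_1\multimap\cdots\multimap\oc B_k\multimap F$ is clearly computable in polynomial time. (One small bookkeeping point: the target formula uses $\oc$, which is absent from the languages of $\mathbf{BCK}$ and $\mathbf{FL}^+_{\mathbf{ei}}$ but present in $\mathbf{ILZW}$, so it is a legitimate instance of $\mathbf{ILZW}$ provability.)

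For $\mathbf{InFL}_{\mathbf{ew}}$ the argument is parallel via Lemma~\ref{infleitollw}: an instance $\Phi\cup\{F\}$ reduces to the $\mathbf{LLW}$-provability of the sequent $\vdash \wn\Phi^\bot, F$, which I convert to the single classical formula $(\wn B_1^\bot)\parr\cdots\parr(\wn B_k^\bot)\parr F$ using the invertibility of $(\parr)$ in $\mathbf{LLW}$ (again a consequence of cut-elimination). This is once more a polynomial-time map. Since $\mathbf{ILZW}$ and $\mathbf{LLW}$ provability are both in \textsc{Tower} by Theorem~\ref{logic}, composing with these elementary reductions and invoking closure under elementary many-one reductions places all four deducibility problems in \textsc{Tower}.

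I do not expect a real obstacle here; the proof is essentially assembly. The only step deserving a sentence of care is the reduction from sequent-provability to formula-provability, which I would dispatch entirely through the invertibility remarks above (justified by Theorem~\ref{cut}) rather than re-opening anything about $\oc$-prenex sequents, and the trivial observation that when $\Phi=\emptyset$ the reductions degenerate to the identity on provability instances.
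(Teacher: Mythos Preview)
Your proposal is correct and follows the paper's approach exactly: reduce via Lemmas~\ref{fleitoilzw} and~\ref{infleitollw} (and their noted variants for $\mathbf{BCK}$ and $\mathbf{FL}^+_{\mathbf{ei}}$) to provability in $\mathbf{ILZW}$ and $\mathbf{LLW}$, then invoke Theorem~\ref{logic}. The only extra detail you supply is the sequent-to-formula conversion via invertibility of $(\multimap\mathrm{R})$ and $(\parr)$, which the paper leaves implicit; this is correct and harmless.
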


At the end of this section, we show the membership in \textsc{Tower} of the deducibility problem for $\mathbf{FL}_{\mathbf{ew}}$. 
It suffices by Lemma~\ref{fleitoilzw} to show that the provability problem for $\mathbf{ILZW}'$ is in \textsc{Tower}.

Let $F$ be an instance of the provability problem for $\mathbf{ILZW}'$.
As before, $S$ denotes the set of subformulas of $F$, $S_{\oc}$ the set of formulas in $S$ of the form $\oc B$, and $\bullet$ a distinguished symbol. 
We then construct an \abvass\ $\mathcal{A}^{I'}_F$ of dimension $|S|$, by modifying the construction of $\mathcal{A}^E_F$ given in the previous subsection.
The state space of $\mathcal{A}^{I'}_F$ contains $\mathcal{P}(S_{\oc}) \times (S \cup \{\bullet\})$, a distinguished leaf state $q_{\ell}$, and intermediate states that are needed for defining the rules for ($\multimap$\textsf{L}), ($\with\mathsf{L}$), ($\oplus\mathsf{L}$), ($\otimes\mathsf{L}$), ($\mathbf{0}\mathsf{L}$), and ($\top\mathsf{R}$), cf.\ Figure~\ref{rule3}. 
The rules of $\mathcal{A}^{I'}_F$ are ($\mathsf{Init1}$), ($\mathsf{Init2}$), ($\mathsf{store}$), ($\mathbf{1}\mathsf{L}$),  ($\mathbf{1}\mathsf{R}$), ($\bot\mathsf{L}$), ($\bot\mathsf{R}$), ($\multimap$\textsf{L}), ($\multimap$\textsf{R}), ($\otimes\mathsf{L}$), ($\otimes\mathsf{R}$), ($\with\mathsf{L}$), ($\with\mathsf{R}$), ($\oplus\mathsf{L}$), ($\oplus\mathsf{R1}$), ($\oplus\mathsf{R2}$), ($\mathbf{0}\mathsf{L}$), ($\top\mathsf{R}$), ($\oc\mathsf{W}$), 
($\mathsf{W'}$), ($\oc\mathsf{D}$) and ($\oc\mathsf{P}$), all of which except for ($\mathsf{W'}$), ($\oc\mathsf{D}$) and ($\oc\mathsf{P}$), are depicted in Figure~\ref{rule3}. 
The rules for ($\mathsf{W'}$), ($\oc\mathsf{D}$) and ($\oc\mathsf{P}$) are defined as in Figure~\ref{threerules}.
\begin{figure}[t]
	\begin{center}
	\begin{tikzpicture}[auto,draw,node distance=1.0cm]
		\node[state,label=left:{($\mathsf{W}'$)}](rw1){$q,A$};
	\node[state,right=of rw1,inner sep=2.5pt](rw2){$q,\bullet$};   
	\path[->, >=stealth] (rw1) edge node{$\vec 0$} (rw2);
	\node[rstate,right=1.5cm of rw2,label=left:{($\oc\mathsf{D}$)}](d1){$q\cup\{\oc A\},X$};
	\node[state,right=of d1](d2){$q,X$};
	\path[->, >=stealth] (d1) edge node{$\vec e_{A}$} (d2);
	\node[state,right=1.5cm of d2,label=left:{($\oc\mathsf{P}$)}](p1){$q,\oc A$};
	\node[state,right=of p1](p2){$q,A$};
	\path[->, >=stealth] (p1) edge node{$\stackrel{?}{=}\vec 0$} (p2);
	\end{tikzpicture}
	\end{center}
	\caption{The \abvass\ rules for ($\mathsf{W'}$), ($\oc\mathsf{D}$) and ($\oc$\textsf{P})}
	\label{threerules}
	\end{figure}
Clearly, these three types of \abvass\ rules correspond to the inference rules of (W'), ($\oc$D) and ($\oc$P) in $\mathbf{ILZW}'$, respectively. 
Note that $\mathcal{A}^{I'}_F$ is not equipped with the rule for ($\mathsf{func}$). 
We can show that $\mathcal{A}^{I'}_F$ simulates the proof search of $F$ in $\mathbf{ILZW}'$ with the lossy semantics; see Appendix~\ref{appmembertower1} for a proof.

\begin{restatable}{Thm}{encodeILZWprime}\label{encodingofILZW2}
	Let $\Pi$ be in $S\cup \{\varepsilon\}$, $\Theta$ a multiset of formulas in $S_{\oc}$, $\Gamma$ a multiset of formulas in $S\setminus S_{\oc}$. $\Theta,\Gamma \vdash \Pi$ is provable in $\mathbf{ILZW}'$ if and only if $\mathcal{A}^{I'}_F,\{q_{\ell}\} \judge_{\ell} \sigma(\Theta),\Pi^{\dagger},\bar{\mathsf{v}}_{\Gamma}$.
\end{restatable}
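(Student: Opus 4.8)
The plan is to prove Theorem~\ref{encodingofILZW2} by a double induction argument that closely parallels the proof of Theorem~\ref{iezwtoabvass}, exploiting the fact that $\mathcal{A}^{I'}_F$ differs from $\mathcal{A}^{E}_F$ only in that the $(\mathsf{func})$ rule is replaced by the three rules $(\mathsf{W'})$, $(\oc\mathsf{D})$ and $(\oc\mathsf{P})$ depicted in Figure~\ref{threerules}, and these are precisely the \abvass\ analogues of the sequent rules $(\mathrm{W'})$, $(\oc\mathrm{D})$ and $(\oc\mathrm{P})$ of $\mathbf{ILZW}'$. For the \emph{only-if} direction I would proceed by induction on the height of a \emph{cut-free} proof of $\Theta,\Gamma \vdash \Pi$ in $\mathbf{ILZW}'$ (available by Theorem~\ref{cut}), performing a case analysis on the last inference rule. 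For each rule $R$ of $\mathbf{ILZW}'$ one shows that the corresponding \abvass\ rule(s) transform the configuration translating the conclusion into the configuration(s) translating the premise(s), so that a leaf-covering lossy deduction tree for the premises can be assembled into one for the conclusion; the rule $(\mathrm{Init})$ and $(\mathbf{1}\mathrm{R})$ supply the leaves via $(\mathsf{Init1})$, $(\mathsf{Init2})$ and $(\mathbf{1}\mathsf{R})$. The genuinely new cases relative to Theorem~\ref{iezwtoabvass} are $(\mathrm{W'})$, $(\oc\mathrm{D})$ and $(\oc\mathrm{P})$: for $(\mathrm{W'})$ the configuration $\sigma(\Theta),A,\vec v_\Gamma$ for the conclusion $\Theta,\Gamma \vdash A$ reaches $\sigma(\Theta),\bullet,\vec v_\Gamma$ for the premise $\Theta,\Gamma \vdash$ via the zero-cost $(\mathsf{W'})$ edge; for $(\oc\mathrm{D})$, passing from $\oc A,\Theta',\Gamma \vdash \Pi$ to $A,\Theta',\Gamma\vdash\Pi$ corresponds to the edge labelled $\vec e_A$ from $\sigma(\Theta'\cup\{\oc A\}),\Pi^\dagger$ to $\sigma(\Theta'),\Pi^\dagger$, where one must be slightly careful since $\sigma$ forgets multiplicity, and the loss rules of the lossy semantics absorb any discrepancy; for $(\oc\mathrm{P})$, passing from $\oc\Theta \vdash \oc A$ to $\oc\Theta \vdash A$ corresponds to the zero-test edge $\fz$, which is enabled exactly because the vector component (the $S\setminus S_\oc$ part) of the configuration translating $\oc\Theta\vdash\oc A$ is $\vec 0$.

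For the \emph{if} direction I would argue by induction on the size of a $\{q_\ell\}\times\{\vec 0\}$-leaf-covering lossy deduction tree $\mathcal{T}$ with root $\sigma(\Theta),\Pi^\dagger,\vec v_\Gamma$, again splitting on the \abvass\ rule applied at the root (or, when the root step is a loss or passes through an intermediate state, on the first ``real'' rule reached). Each such rule is mapped back to the corresponding $\mathbf{ILZW}'$ inference, and the induction hypothesis applied to the subtree(s) yields proofs of the translated premise sequent(s), from which the conclusion sequent follows by one application of the matching rule. The loss rule $(\mathsf{loss})$ is handled using the left-weakening rule $(\mathrm W)$ of $\mathbf{ILZW}'$: decrementing a coordinate $\vec e_B$ with $B\in S\setminus S_\oc$ corresponds to weakening in a copy of $B$, and dropping an element of the set-component $\sigma(\Theta)$ likewise corresponds to weakening in $\oc B$. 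The three new rules are inverted exactly as above, with $(\oc\mathsf{P})$'s zero test guaranteeing that the antecedent of the reconstructed $(\oc\mathrm P)$ application consists solely of $\oc$-formulas and that no stray vector components remain.

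The main obstacle, as in the companion theorem, is the bookkeeping forced by the encoding's asymmetric treatment of $\oc$-formulas: $\oc$-formulas in the antecedent are recorded only by the \emph{set} $\sigma(\Theta)$ (ignoring multiplicities), whereas non-$\oc$ antecedent formulas are recorded with multiplicity in the vector $\vec v_\Gamma$. One must verify that this is sound — essentially that contraction $(\oc\mathrm C)$ and weakening $(\mathrm W)$ on $\oc$-formulas are faithfully mirrored by the idempotent set operations together with the loss rules — and that the $(\oc\mathsf{D})$ rule, which \emph{moves} an $\oc A$ from the set component into the vector component as a plain $A$, interacts correctly when several copies of $\oc A$ were notionally present. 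I expect this to require a careful statement of an invariant relating multisets $\Theta$ to their supports throughout the induction, exactly analogous to the argument of Lazi\'c and Schmitz~\cite{LS15} for $\mathcal{A}^I_F$; since the only change is the $(\mathrm{W'})$ rule (which is entirely routine, merely overwriting the stoup by $\bullet$ at zero cost) the proof of Theorem~\ref{iezwtoabvass} and of~\cite[Corollary~5.4]{LS15} adapt essentially verbatim, and I would present the new $(\mathrm{W'})$, $(\oc\mathrm D)$, $(\oc\mathrm P)$ cases in detail while referring to those sources for the shared cases. The full argument is deferred to Appendix~\ref{appmembertower1}.
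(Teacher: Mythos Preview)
Your overall strategy matches the paper's: the \emph{only-if} direction is by induction on cut-free $\mathbf{ILZW}'$ proofs, the \emph{if} direction by induction on leaf-covering lossy deduction trees, and the only genuinely new case beyond~\cite{LS15} is $(\mathrm{W'})$, which you handle correctly.

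There is, however, one point where your description diverges from what the paper actually does and where your wording would lead to trouble if taken literally. For the \emph{if} direction you propose to induct on an \emph{arbitrary} $\{q_\ell\}\times\{\vec 0\}$-leaf-covering lossy deduction tree, ``looking ahead'' past loss steps and intermediate states. The paper instead first invokes (the $\mathcal{A}^{I'}_F$-version of) Lemma~\ref{standard} to obtain a \emph{standard} lossy deduction tree, and only then inducts. This is not cosmetic: without standardisation the child configurations need not be of the shape $\sigma(\Theta'),\Pi'^{\dagger},\vec v_{\Gamma'}$ with $\Gamma'$ a multiset over $S\setminus S_\oc$. For example, after $(\multimap\mathsf{R})$ with $A\multimap B$ where $A\in S_\oc$, or after $(\oc\mathsf{D})$ when the derelicted body is itself a $\oc$-formula, the vector acquires a $\oc$-coordinate and the induction hypothesis, as stated in the theorem, does not apply. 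Standardisation fixes this by forcing $(\mathsf{store})$ to fire immediately, pushing any such $\oc$-formula back into the set component, and by forbidding $(\mathsf{loss})$ at intermediate states. Your informal ``invariant relating multisets $\Theta$ to their supports'' is exactly what Lemma~\ref{standard} makes precise; you should cite it (and note that it carries over to $\mathcal{A}^{I'}_F$) rather than attempt the induction on raw lossy trees.
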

Specifically, $\mathcal{A}^{I'}_F,\{q_{\ell}\} \judge_{\ell} \emptyset,F,\vec 0$ if and only if $\vdash F$ is provable in $\mathbf{ILZW}'$; thus Lemma~\ref{fleitoilzw} and Theorem~\ref{abvass1} provide:
\begin{Cor}
	The provability problem for $\mathbf{ILZW'}$ is in \textsc{Tower}.
\end{Cor}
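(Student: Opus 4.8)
The plan is to derive the corollary from Theorem~\ref{encodingofILZW2} by specializing it to an empty context, and then to invoke the \textsc{Tower}-completeness of lossy reachability for \abvass s (Theorem~\ref{abvass1}) together with the closure of \textsc{Tower} under elementary reductions recorded in Section~\ref{tower}.

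First I would instantiate Theorem~\ref{encodingofILZW2} with $\Theta$ and $\Gamma$ both empty and $\Pi=F$. Since $\sigma(\varepsilon)=\emptyset$, $F^{\dagger}=F$, and $\vec v_{\varepsilon}=\vec 0$, this yields precisely that $\vdash F$ is provable in $\mathbf{ILZW}'$ if and only if $\mathcal{A}^{I'}_F,\{q_{\ell}\}\judge_{\ell}\emptyset,F,\vec 0$; hence $F\mapsto(\mathcal{A}^{I'}_F,\{q_{\ell}\},(\emptyset,F,\vec 0))$ is a many-one reduction from provability in $\mathbf{ILZW}'$ to lossy reachability for \abvass s. Second I would verify that this reduction is elementary: the dimension of $\mathcal{A}^{I'}_F$ is $|S|\le|F|$, its state space is contained in $\mathcal{P}(S_{\oc})\times(S\cup\{\bullet\})$ together with $q_{\ell}$ and a bounded number of intermediate states per rule (so it is of size at most exponential in $|F|$, the blow-up coming only from the powerset $\mathcal{P}(S_{\oc})$), and each rule of Figures~\ref{rule3} and~\ref{threerules} is described by vectors with entries in $\{-1,0,1\}$; thus $\mathcal{A}^{I'}_F$ can be computed from $F$ within a fixed elementary time bound. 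Third, since lossy reachability for \abvass s lies in \textsc{Tower} by Theorem~\ref{abvass1} and \textsc{Tower} is closed under elementary many-one reductions, provability in $\mathbf{ILZW}'$ lies in \textsc{Tower}; composing this with Lemma~\ref{fleitoilzw} additionally yields the membership in \textsc{Tower} of deducibility in $\mathbf{FL}_{\mathbf{ew}}$.

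There is no genuine obstacle at the level of the corollary itself — essentially all the content resides in Theorem~\ref{encodingofILZW2}, whose proof (deferred to the appendix) must establish by a bidirectional induction that the $\{q_{\ell}\}\times\{\vec 0\}$-leaf-covering lossy deduction trees of $\mathcal{A}^{I'}_F$ correspond exactly to cut-free $\mathbf{ILZW}'$ proofs, with the \abvass\ rules ($\mathsf{W'}$), ($\oc\mathsf{D}$), ($\oc\mathsf{P}$) of Figure~\ref{threerules} simulating the homonymous inference rules and the loss rules accounting for left-weakening. The only point to watch in the derivation of the corollary is that the reduction is merely elementary rather than polynomial-time — because of the exponential state space — so one must appeal to closure under elementary reductions, which is exactly how \textsc{Tower} is set up in Section~\ref{tower}.
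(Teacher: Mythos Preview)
Your proposal is correct and follows essentially the same route as the paper: specialize Theorem~\ref{encodingofILZW2} to the instance $\Theta=\Gamma=\varepsilon$, $\Pi=F$ to obtain the equivalence $\vdash F$ provable in $\mathbf{ILZW}'$ iff $\mathcal{A}^{I'}_F,\{q_{\ell}\}\judge_{\ell}\emptyset,F,\vec 0$, and then invoke Theorem~\ref{abvass1}. Your explicit discussion of why the map $F\mapsto\mathcal{A}^{I'}_F$ is elementary (the only blow-up being the powerset $\mathcal{P}(S_{\oc})$ in the state space) and the appeal to closure of \textsc{Tower} under elementary reductions make the argument more self-contained than the paper's one-line justification, but the approach is the same.
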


\begin{Cor}
	\label{flewisintower}
	The deducibility problem for $\mathbf{FL}_{\mathbf{ew}}$ is in \textsc{Tower}.
\end{Cor}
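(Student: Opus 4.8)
The plan is to chain the syntactic reduction of Lemma~\ref{fleitoilzw}(2) with the \textsc{Tower} upper bound for $\mathbf{ILZW}'$ provability obtained just above, and then close with the fact, recalled in Section~\ref{tower}, that \textsc{Tower} is stable under elementary many-one reductions. Given an instance $\Phi \cup \{F\}$ of the deducibility problem for $\mathbf{FL}_{\mathbf{ew}}$, I would map it to the $\oc$-prenex sequent $\oc\Phi \vdash F$: applying Lemma~\ref{fleitoilzw}(2) with empty $\Gamma$ and $\Pi = F$, the sequent $\vdash F$ is provable in $\mathbf{FL}_{\mathbf{ew}}[\Phi]$ if and only if $\oc\Phi \vdash F$ is provable in $\mathbf{ILZW}'$. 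The map $\Phi\cup\{F\}\mapsto(\oc\Phi\vdash F)$ is plainly computable in polynomial time, hence is an elementary many-one reduction from deducibility in $\mathbf{FL}_{\mathbf{ew}}$ to provability in $\mathbf{ILZW}'$.

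It then remains only to note that deciding provability of $\oc\Phi \vdash F$ in $\mathbf{ILZW}'$ is in \textsc{Tower}. One may invoke Theorem~\ref{encodingofILZW2} directly: take $F'$ to be any formula whose set $S$ of subformulas contains every member of $\oc\Phi$ together with $F$ (for instance $F' := \oc B_1 \multimap (\cdots \multimap (\oc B_k \multimap F)\cdots)$ for an enumeration $B_1,\ldots,B_k$ of $\Phi$), and write $\oc\Phi \vdash F$ as $\Theta,\Gamma\vdash\Pi$ with $\Theta=\oc\Phi\subseteq S_{\oc}$, $\Gamma=\varepsilon$, $\Pi=F$. Then Theorem~\ref{encodingofILZW2} equates provability of $\oc\Phi\vdash F$ in $\mathbf{ILZW}'$ with the lossy reachability query $\mathcal{A}^{I'}_{F'},\{q_\ell\}\judge_\ell \sigma(\oc\Phi),F,\vec 0$. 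Since $\mathcal{A}^{I'}_{F'}$ has dimension $|S|$, linear in the input, and all of its rules are read off $S$ and the connectives occurring in $F'$, this last map is again polynomial-time computable; as lossy reachability for \abvass s is \textsc{Tower}-complete by Theorem~\ref{abvass1}, composing the two elementary reductions and using closure of \textsc{Tower} under such reductions gives the claim. Equivalently, one can simply quote the corollary established immediately above — that provability in $\mathbf{ILZW}'$ is in \textsc{Tower} — after remarking that invertibility of $(\multimap$R$)$ turns the sequent $\oc\Phi\vdash F$ into the single formula $F'$.

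I do not anticipate a genuine obstacle here: all the substantive content — the correctness of the \abvass\ encoding $\mathcal{A}^{I'}_{F}$ (Theorem~\ref{encodingofILZW2}) and the \textsc{Tower} bound for lossy reachability (Theorem~\ref{abvass1}) — is already in place, and this corollary is essentially a bookkeeping composition of reductions. The only point requiring a line of care is that the provability problem is phrased for a single formula, whereas Lemma~\ref{fleitoilzw}(2) produces the sequent $\oc\Phi\vdash F$; this is harmless, since Theorem~\ref{encodingofILZW2} is stated at the level of sequents, and in any case $(\multimap$R$)$-invertibility reduces $\oc\Phi\vdash F$ to a formula of polynomial size.
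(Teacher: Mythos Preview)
Your proposal is correct and follows essentially the same route as the paper: reduce $\mathbf{FL}_{\mathbf{ew}}$-deducibility to $\mathbf{ILZW}'$-provability via Lemma~\ref{fleitoilzw}(2), then invoke the \textsc{Tower} upper bound for $\mathbf{ILZW}'$ obtained from Theorem~\ref{encodingofILZW2} together with Theorem~\ref{abvass1}. Your extra care about the sequent-versus-formula mismatch (either building $F'$ so that Theorem~\ref{encodingofILZW2} applies directly to $\oc\Phi\vdash F$, or invoking invertibility of $(\multimap\mathrm{R})$) is a welcome clarification that the paper leaves implicit.
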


\section{Tower-hardness of contraction-free logics}
\label{encodingofbvass}

For our purposes, we recall from \cite[Section 4.2]{LS15} a log-space reduction from the lossy reachability problem for ordinary BVASSs to the provability problem of $\wn$-prenex $\{\otimes,\parr\}$-sequents in $\mathbf{LLW}$. 

Let $(\mathcal{B},Q_{\ell},q_r)$ be an instance of the lossy reachability problem for ordinary BVASSs, where $\mathcal{B}=\langle Q,d,T_u,T_s,\emptyset,\emptyset \rangle$. 
We fix a set $Q \cup \{e_i \mid 1 \leq i \leq d\}$ of propositional variables.
Given $(q,\vec v) \in Q \times \mathbb{N}^{d}$, we define $\theta(q,\vec v)=q^{\bot},(e_1^{\bot})^{\vec v(1)},\ldots,(e_d^{\bot})^{\vec v(d)}$, where $\vec v(i)$ stands for the $i$-th coordinate of $\vec v$.
We write $T$ for the set of the three types of non-logical axioms, each of which is derived from $T_u \cup T_s$ as follows:
\begin{itemize}
	\item $\vdash q^{\bot},q_1 \otimes e_i$ for $q\xrightarrow{\vec e_i}q_1 \in T_u$,
	\item $\vdash q^{\bot},e_i^{\bot},q_1$ for $q\xrightarrow{-\vec e_i}q_1 \in T_u$,
	\item $\vdash q^{\bot},q_1 \parr q_2$ for $q \rightarrow q_1+q_2 \in T_s$.
\end{itemize}
Each sequent in $T$ is of the form $\vdash q_1^{\bot},\ldots,q_n^{\bot},C$,  where $q^{\bot}_1,\ldots,q^{\bot}_n$ are negative literals and $C$ is a classical $\{\otimes,\parr\}$-formula. 
For any sequent $t=\ \vdash q_1^{\bot},\ldots,q_n^{\bot},C$ in $T$, we define $\ulcorner t \urcorner= q_1 \otimes \cdots \otimes q_n \otimes C^{\bot}$.
Given a finite set $T=\{t_1,\ldots,t_n\}$ of sequents, $\ulcorner T \urcorner$ denotes the multiset $\ulcorner t_1 \urcorner,\ldots, \ulcorner t_n \urcorner$. 
It then holds that, for any $(q,\vec v) \in Q \times \mathbb{N}^d$, $\mathcal{B},Q_{\ell} \judge_{\ell} q,\vec v$ if and only if $\vdash \wn \ulcorner T \urcorner,\wn Q_{\ell},\theta(q,\vec v)$ is provable in $\mathbf{LLW}$. 
In particular, the following holds:

\begin{Thm}[Lazi\'c and Schmitz~\cite{LS15}, Section~4.2.3]
	\label{BVASStoLL}
	$\mathcal{B},Q_{\ell} \judge_{\ell} q_r,\vec 0$ if and only if $\vdash \wn \ulcorner T \urcorner,\wn Q_{\ell},q_r^{\bot}$ is provable in $\mathbf{LLW}$. 
\end{Thm}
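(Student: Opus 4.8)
The claimed biconditional is the instance $(q,\vec v)=(q_r,\vec 0)$ of the more general equivalence recorded immediately before the statement (note $\theta(q_r,\vec 0)=q_r^{\bot}$), so the plan is to prove that general equivalence---$\mathcal{B},Q_{\ell}\judge_{\ell} q,\vec v$ if and only if $\vdash\wn\ulcorner T\urcorner,\wn Q_{\ell},\theta(q,\vec v)$ is provable in $\mathbf{LLW}$---by two independent inductions, from which the theorem is immediate.

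For the \emph{only if} direction I would induct on a $Q_{\ell}\times\{\vec 0\}$-leaf-covering lossy deduction tree $\mathcal{T}$ with root $q,\vec v$, translating $\mathcal{T}$ node by node into an $\mathbf{LLW}$-derivation of the associated sequent. If the root of $\mathcal{T}$ is a leaf then $q\in Q_{\ell}$ and $\vec v=\vec 0$, and $\vdash\wn\ulcorner T\urcorner,\wn Q_{\ell},q^{\bot}$ is obtained from the axiom $\vdash q,q^{\bot}$ by the dereliction rule $(\wn)$ (turning $q$ into a member $\wn q$ of $\wn Q_{\ell}$) followed by applications of $(\mathrm W)$ to weaken in all remaining formulas. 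For an internal node I would simulate the deduction rule applied there: a $(\mathsf{loss})$ step is mirrored by a single $(\mathrm W)$, while an increment, decrement, or split step uses the matching sequent $t\in T$ by contracting $\wn\ulcorner t\urcorner$ with $(\wn\mathrm C)$, exposing $\ulcorner t\urcorner$ with $(\wn)$, and fully decomposing the $\otimes/\parr$-structure of $\ulcorner t\urcorner$ by the multiplicative rules so that the literal-only branches close by $(\mathrm{Init})$ on $q^{\bot}$ and, where needed, on an $e_i^{\bot}$, leaving the one or two remaining branches equal to the sequents of the child configurations, to which the induction hypothesis applies. (No fork or zero-test case arises: $\mathcal{B}$ is a BVASS, so $T_f=T_z=\emptyset$.)

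For the \emph{if} direction I would invoke cut-elimination for $\mathbf{LLW}$ (Theorem~\ref{cut}) and read a lossy deduction tree off a cut-free proof of $\vdash\wn\ulcorner T\urcorner,\wn Q_{\ell},\theta(q,\vec v)$. By the subformula property every formula in such a proof is a literal, a $\wn$-formula $\wn\ulcorner t\urcorner$ or $\wn q$, or a subformula of some $\ulcorner t\urcorner$ (a positive $\otimes,\parr$-combination of the atoms $q$ and $e_i$); using invertibility of $(\parr)$ and routine permutations of $(\wn\mathrm C)$, $(\wn)$ and $(\mathrm W)$, one normalises the proof into blocks, each block being contractions and one dereliction exposing a single $\ulcorner t\urcorner$ followed by the complete multiplicative decomposition of that $\ulcorner t\urcorner$ with its literal-only premises closed by $(\mathrm{Init})$. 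Each block then corresponds to one increment/decrement/split deduction rule, each isolated $(\mathrm W)$ to a $(\mathsf{loss})$ step, and each $(\mathrm{Init})$-closed leaf still carrying some $\wn q$ to a $Q_{\ell}\times\{\vec 0\}$-leaf, so the resulting tree witnesses $\mathcal{B},Q_{\ell}\judge_{\ell} q,\vec v$.

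I expect the \emph{if} direction to be the main obstacle. An arbitrary cut-free proof is not presented in block form, and its intermediate sequents are not literally of the shape $\vdash\wn\ulcorner T\urcorner,\wn Q_{\ell},\theta(\cdot)$, so one must either pass to a focused refinement of the cut-free calculus or carry through the induction a strengthened invariant tracking exactly which literal occurrences, which $\ulcorner t\urcorner$-subformulas, and which $\wn$-formulas are present in each sequent---in particular how contraction and dereliction on the $\wn$-formulas implement the unbounded re-use of the finitely many axioms $T$ and leaf states $Q_{\ell}$. Essentially all the work is in this bookkeeping; once it is in place, each induction case matches a single deduction rule of $\mathcal{B}$ (or a loss) one-to-one.
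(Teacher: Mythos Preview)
The paper does not give its own proof of this statement: it is quoted as a result of Lazi\'c and Schmitz~\cite{LS15} and used as a black box. Your sketch is a faithful reconstruction of the standard argument (and essentially the one in~\cite{LS15}): the forward direction is a routine induction on lossy deduction trees, translating each BVASS rule into a short $\mathbf{LLW}$ derivation that exposes the matching $\ulcorner t\urcorner$ via contraction and dereliction, with $(\mathrm W)$ handling $(\mathsf{loss})$; the converse uses cut elimination and then reads a lossy deduction tree off the cut-free proof. Your identification of the only real difficulty---that intermediate sequents in an arbitrary cut-free proof need not have the canonical shape $\vdash\wn\ulcorner T\urcorner,\wn Q_{\ell},\theta(\cdot)$, so one must either pass to a focused presentation or carry a strengthened invariant tracking partially decomposed $\ulcorner t\urcorner$-subformulas and the multiplicities of the $\wn$-formulas---is exactly right, and either of the two remedies you name suffices.
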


The key observation here is that $\vdash \wn \ulcorner T \urcorner,\wn Q_{\ell},q_r^{\bot}$ forms a $\wn$-prenex $\{\otimes,\parr\}$-sequent. 
Thus in conjunction with Theorem~\ref{bvass1}, we obtain:

\begin{Cor}[Lazi\'c and Schmitz \cite{LS15}, Section~4.2.3]
	\label{llwistowerhard}
	The problem of determining if a given $\wn$-prenex $\{\otimes,\parr\}$-sequent is provable in $\mathbf{LLW}$ is \textsc{Tower}-hard.
\end{Cor}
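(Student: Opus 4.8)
The plan is to obtain the statement by composing the log-space reduction recalled immediately above with the \textsc{Tower}-hardness of its source problem, after checking that the target instances all lie in the $\wn$-prenex $\{\otimes,\parr\}$ fragment. Concretely, I would argue that the map sending an instance $(\mathcal{B},Q_{\ell},q_r)$ of the lossy reachability problem for ordinary BVASSs, with $\mathcal{B}=\langle Q,d,T_u,T_s,\emptyset,\emptyset\rangle$, to the sequent $\vdash\wn\ulcorner T\urcorner,\wn Q_{\ell},q_r^{\bot}$ is an elementary (indeed logarithmic-space) many-one reduction to provability of $\wn$-prenex $\{\otimes,\parr\}$-sequents in $\mathbf{LLW}$; since \textsc{Tower} is closed under elementary reductions and the source is \textsc{Tower}-hard by Theorem~\ref{bvass1}, the target is \textsc{Tower}-hard.

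First I would note that the map is log-space computable: the set $T$ contains exactly one non-logical axiom per rule of $T_u\cup T_s$, each produced by a purely local syntactic rewriting of $q\xrightarrow{\vec e_i}q_1$, $q\xrightarrow{-\vec e_i}q_1$, or $q\rightarrow q_1+q_2$, and the operation $t\mapsto\ulcorner t\urcorner$ turns a sequent $\vdash q_1^{\bot},\dots,q_n^{\bot},C$ into the single formula $q_1\otimes\cdots\otimes q_n\otimes C^{\bot}$, with $C^{\bot}$ obtained by de~Morgan duality. Correctness of the reduction is exactly Theorem~\ref{BVASStoLL}: $\mathcal{B},Q_{\ell}\judge_{\ell}q_r,\vec 0$ holds if and only if $\vdash\wn\ulcorner T\urcorner,\wn Q_{\ell},q_r^{\bot}$ is provable in $\mathbf{LLW}$.

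Next I would verify that the image of this reduction consists of $\wn$-prenex $\{\otimes,\parr\}$-sequents, so that the reduction genuinely targets the restricted problem in the statement. Every formula $C$ occurring in an axiom of $T$ is a classical $\{\otimes,\parr\}$-formula, hence so is $C^{\bot}$ (the duality only swaps $\otimes$ and $\parr$ on such formulas), and the literals $q_i$ are $\{\otimes,\parr\}$-formulas; therefore each $\ulcorner t\urcorner$ is a classical $\{\otimes,\parr\}$-formula, and $\vdash\wn\ulcorner T\urcorner,\wn Q_{\ell},q_r^{\bot}$ has the shape $\vdash\wn\Gamma,\Delta$ with $\Gamma=\ulcorner T\urcorner,Q_{\ell}$ and $\Delta=q_r^{\bot}$ both multisets of classical $\{\otimes,\parr\}$-formulas, i.e.\ it is a $\wn$-prenex $\{\otimes,\parr\}$-sequent. (The image need not exhaust all such sequents; this is irrelevant for hardness.)

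I do not expect a genuine obstacle here. The only non-bookkeeping ingredient, namely the soundness and completeness of the BVASS-to-$\mathbf{LLW}$ encoding, is already imported from~\cite{LS15} through Theorem~\ref{BVASStoLL}, and the \textsc{Tower}-hardness of the ordinary-BVASS lossy reachability problem is Theorem~\ref{bvass1}; the remaining work is the routine verification, sketched above, that the reduction is computable in logarithmic space and that its outputs belong to the $\wn$-prenex $\{\otimes,\parr\}$ fragment. If anything, the point most worth stating carefully is precisely this last fragment-membership observation, since it is what makes the \textsc{Tower} lower bound survive the restriction to $\wn$-prenex $\{\otimes,\parr\}$-sequents.
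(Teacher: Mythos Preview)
Your proposal is correct and follows essentially the same approach as the paper: combine Theorem~\ref{BVASStoLL} with Theorem~\ref{bvass1}, after observing that the output sequent $\vdash \wn\ulcorner T\urcorner,\wn Q_{\ell},q_r^{\bot}$ is a $\wn$-prenex $\{\otimes,\parr\}$-sequent. Your write-up is more explicit about the fragment-membership check and the log-space computability, but the argument is the same.
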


\subsection{Tower-hardness of deducibility in FLew and related systems}

A \emph{$\oc$-prenex $\{\multimap\}$-sequent} is an intuitionistic $\{\multimap,\oc\}$-sequent of the form $\oc \Gamma,\Delta \vdash C$ where the only connective occurring in $\Gamma,\Delta,C$ is $\multimap$. 
We prove: 

\begin{Thm}\label{illwistowerhard}
	The problem of determining if a given $\oc$-prenex $\{\multimap\}$-sequent is provable in the $\{\multimap,\oc\}$-fragment of $\mathbf{ILLW}$ is \textsc{Tower}-hard.
\end{Thm}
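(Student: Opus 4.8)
The plan is to give an elementary (indeed logarithmic-space) many-one reduction from the \textsc{Tower}-hard problem of Corollary~\ref{llwistowerhard}, namely provability of $\wn$-prenex $\{\otimes,\parr\}$-sequents in $\mathbf{LLW}$, to provability of $\oc$-prenex $\{\multimap\}$-sequents in the $\{\multimap,\oc\}$-fragment of $\mathbf{ILLW}$. The reduction map is simply the translation $(\_)^{\jump{x}}$ of Section~\ref{translation} applied with a fresh propositional variable $x$: a $\wn$-prenex $\{\otimes,\parr\}$-sequent $\vdash\wn\Gamma,\Delta$ (with $x$ not occurring in $\Gamma,\Delta$) is sent to $\oc(\Gamma^{\jump{x}}),\Delta^{\jump{x}}\vdash x$.

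First I would check that the output really is a $\oc$-prenex $\{\multimap\}$-sequent. The defining clauses $(B\otimes C)^{\jump{x}}=(B^{\jump{x}}\multimap x)\multimap C^{\jump{x}}$ and $(B\parr C)^{\jump{x}}=(B^{\jump{x}}\multimap(C^{\jump{x}}\multimap x))\multimap x$ show that the image of any classical $\{\otimes,\parr\}$-formula is an intuitionistic $\{\multimap\}$-formula, while $(\wn B)^{\jump{x}}=\oc B^{\jump{x}}$. Hence $\Gamma^{\jump{x}}$ and $\Delta^{\jump{x}}$ are multisets of $\{\multimap\}$-formulas, the succedent $x$ is a $\{\multimap\}$-formula, and no occurrence of $\oc$ appears outside the prenex block $\oc(\Gamma^{\jump{x}})$; so $\oc(\Gamma^{\jump{x}}),\Delta^{\jump{x}}\vdash x$ fits the definition. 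The translation enlarges formulas only linearly and is computed by an obvious recursion, so the map is logarithmic-space computable, hence an elementary reduction.

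For correctness I would invoke Theorem~\ref{illtoll5}(1): $\vdash\wn\Gamma,\Delta$ is provable in $\mathbf{LLW}$ if and only if $(\wn\Gamma,\Delta)^{\jump{x}}\vdash x$, that is $\oc(\Gamma^{\jump{x}}),\Delta^{\jump{x}}\vdash x$, is provable in $\mathbf{ILLW}$. It then remains to descend from $\mathbf{ILLW}$ to its $\{\multimap,\oc\}$-fragment, which is the same kind of conservativity step used in the proof of Corollary~\ref{np}: by cut-elimination (Theorem~\ref{cut}) a provable $\{\multimap,\oc\}$-sequent has a cut-free $\mathbf{ILLW}$-proof, and by the subformula property every formula occurring in it is a $\{\multimap,\oc\}$-formula, so the proof already lies in the $\{\multimap,\oc\}$-fragment. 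Combining these equivalences with Corollary~\ref{llwistowerhard} and the closure of \textsc{Tower} under elementary reductions finishes the proof. I do not expect a substantial obstacle here; the only thing demanding a moment's care is confirming that the translation of a $\wn$-prenex sequent has precisely the $\oc$-prenex shape $\oc(\cdots),(\cdots)\vdash(\text{variable})$ demanded by the definition, which is immediate from the clauses for $\otimes$, $\parr$, $\wn$ and atoms.
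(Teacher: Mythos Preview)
Your proposal is correct and follows essentially the same approach as the paper's own proof: reduce from Corollary~\ref{llwistowerhard} via the translation $(\_)^{\jump{x}}$, invoke Theorem~\ref{illtoll5}(1), and use cut-elimination to pass to the $\{\multimap,\oc\}$-fragment. You give a bit more detail (spelling out the relevant translation clauses and the log-space bound), but the argument is the same.
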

\begin{proof}
	In view of Corollary~\ref{llwistowerhard}, we reduce from the problem of whether a given $\wn$-prenex $\{\otimes,\parr\}$-sequent is provable in $\mathbf{LLW}$. 
	Let $\vdash \wn \Gamma,\Delta$ be a $\wn$-prenex $\{\otimes,\parr\}$-sequent and $x$ a new propositional variable not occurring in $\wn\Gamma,\Delta$. 
	Theorem~\ref{illtoll5} guarantees that $\vdash \wn \Gamma,\Delta$ is provable in $\mathbf{LLW}$ if and only if $(\wn \Gamma,\Delta)^{[x]} \vdash x$ is provable in $\mathbf{ILLW}$. 
	Clearly, the latter sequent forms a $\oc$-prenex $\{\multimap\}$-sequent. 
    Due to the fact that $\mathbf{ILLW}$ admits cut-elimination, $\mathbf{ILLW}$ is a conservative extension of its $\{\multimap,\oc\}$-fragment; hence $(\wn \Gamma,\Delta)^{[x]} \vdash x$ is provable in $\mathbf{ILLW}$ if and only if it is provable in the $\{\multimap,\oc\}$-fragment of $\mathbf{ILLW}$.
    We conclude that $\vdash \wn \Gamma,\Delta$ is provable in $\mathbf{LLW}$ if and only if $(\wn \Gamma,\Delta)^{[x]} \vdash x$ is provable in the $\{\multimap,\oc\}$-fragment of $\mathbf{ILLW}$. 
    Hence the problem is hard for \tower.
\end{proof}

We furthermore show the following lemma; see Appendix~\ref{appenproofhard} for a proof.
\begin{restatable}{Lem}{prenextransii}\label{prenextrans2}
	Let $\oc \Gamma,\Delta \vdash A$ be a $\oc$-prenex $\{\multimap\}$-sequent. 
	The following statements are mutually equivalent:
	\begin{bracketenumerate}
		\item $\oc \Gamma,\Delta \vdash A$ is provable in the $\{\multimap,\oc\}$-fragment of $\mathbf{ILLW}$,
		\item $\Delta \vdash A$ is provable in $\mathbf{BCK}[\sigma(\Gamma)]$,
		\item $\Delta \vdash A$ is provable in $\mathbf{FL}^+_{\mathbf{ei}}[\sigma(\Gamma)]$,
		\item $\Delta \vdash A$ is provable in $\mathbf{FL}_{\mathbf{ei}}[\sigma(\Gamma)]$,
		\item $\Delta \vdash A$ is provable in $\mathbf{FL}_{\mathbf{ew}}[\sigma(\Gamma)]$.
	\end{bracketenumerate}
Here $\sigma(\Gamma)$ stands for the support of $\Gamma$, i.e., the set of formulas that are contained in $\Gamma$ at least once.
\end{restatable}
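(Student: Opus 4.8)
The plan is to prove the cycle of implications $(1)\Rightarrow(5)\Rightarrow(4)\Rightarrow(3)\Rightarrow(2)\Rightarrow(1)$, exploiting the hierarchy of systems $\mathbf{BCK}\subseteq\mathbf{FL}^+_{\mathbf{ei}}\subseteq\mathbf{FL}_{\mathbf{ei}}\subseteq\mathbf{FL}_{\mathbf{ew}}$ together with the reductions already established in this section. The most substantial work has, in fact, already been done: Lemma~\ref{ilzwtoflei}(1) converts a $\oc$-prenex provability in $\mathbf{ILZW}$ into a provability of $\Gamma^n,\Delta\vdash A$ in $\mathbf{FL}_{\mathbf{ei}}$, and Lemma~\ref{fleitoilzw} (together with the remark after it, which extends the argument to $\mathbf{FL}^+_{\mathbf{ei}}$ and $\mathbf{BCK}$) packages the $\oc$-prenex sequent on the left-hand side as a deducibility instance. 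So the skeleton is to thread the statement through these lemmas restricted to the purely implicational fragment.

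\textbf{Step 1 (from ILLW-provability to $\mathbf{FL}_{\mathbf{ei}}$-deducibility).} I would first observe that a provable $\oc$-prenex $\{\multimap\}$-sequent $\oc\Gamma,\Delta\vdash A$ in the $\{\multimap,\oc\}$-fragment of $\mathbf{ILLW}$ is a fortiori provable in $\mathbf{ILZW}$ (since $\mathbf{ILLW}$ is the $\mathcal{L}^+$-fragment of $\mathbf{ILZW}$, provability carries over). Since it is a $\oc$-prenex $\{\otimes,\multimap,\with,\oplus,\mathbf{1},\bot\}$-sequent, Lemma~\ref{ilzwtoflei}(1) yields $n\ge 0$ with $\Gamma^n,\Delta\vdash A$ provable in $\mathbf{FL}_{\mathbf{ei}}$. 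Cutting against the axioms $\vdash B$ for $B\in\sigma(\Gamma)$ (available in $\mathbf{FL}_{\mathbf{ei}}[\sigma(\Gamma)]$), exactly as in the \emph{if}-direction of Lemma~\ref{fleitoilzw}, gives $\Delta\vdash A$ provable in $\mathbf{FL}_{\mathbf{ei}}[\sigma(\Gamma)]$; and since $\mathbf{FL}_{\mathbf{ei}}\subseteq\mathbf{FL}_{\mathbf{ew}}$, also in $\mathbf{FL}_{\mathbf{ew}}[\sigma(\Gamma)]$. This establishes $(1)\Rightarrow(5)$ (and simultaneously $(1)\Rightarrow(4)$).

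\textbf{Step 2 (collapsing the intermediate systems on purely implicational sequents).} For $(5)\Rightarrow(4)\Rightarrow(3)\Rightarrow(2)$ I would argue that when the axioms in $\sigma(\Gamma)$ and the sequent $\Delta\vdash A$ contain only $\multimap$, a cut-free proof in $\mathbf{FL}_{\mathbf{ew}}[\sigma(\Gamma)]$ can only use (Init), ($\multimap$L), ($\multimap$R), (W), (W'), and cuts against the axioms; the subformula property (from the cut-elimination noted in Section~\ref{sequentcalculus}, applied to $\mathbf{FL}_{\mathbf{ew}}$ with extra initial sequents, or more carefully by first reducing to $\mathbf{ILZW}'$ via the converse of Lemma~\ref{fleitoilzw}(2) and using cut-elimination there) forces every sequent in the proof to consist of $\{\multimap\}$-formulas, so the rules for $\otimes,\with,\oplus,\mathbf{1}$ are never needed, and since all formulas are $\bot$-free the stoup is never empty, so (W') and ($\bot$R) are never used. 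Hence the proof already lives in $\mathbf{BCK}[\sigma(\Gamma)]$, giving $(5)\Rightarrow(2)$; the implications $(4)\Rightarrow(3)$ and $(3)\Rightarrow(2)$ are then immediate specializations, and $(2)\Rightarrow(3)\Rightarrow(4)\Rightarrow(5)$ hold trivially by inclusion of systems.

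\textbf{Step 3 (closing the loop: $(2)\Rightarrow(1)$).} Finally, from a proof of $\Delta\vdash A$ in $\mathbf{BCK}[\sigma(\Gamma)]$, the \emph{only-if} direction of (the $\mathbf{BCK}$-analogue of) Lemma~\ref{fleitoilzw} produces a proof of $\oc\,\sigma(\Gamma),\Delta\vdash A$ in $\mathbf{ILZW}$ restricted to $\{\multimap,\oc\}$, by induction on proof height: each axiom $\vdash B$ is replaced by deriving $\oc B\vdash B$ via ($\oc$D) and (Init), and then ($\oc$C) and ($\oc$P)-style bookkeeping let one promote the finitely many $\oc\,\sigma(\Gamma)$ formulas to absorb arbitrarily many copies used along the proof; since $\oc\,\sigma(\Gamma)$ and $\oc\Gamma$ have the same set of underlying formulas, $\oc$-contraction lets one pass freely between $\oc\Gamma$ and $\oc\,\sigma(\Gamma)$, yielding provability of $\oc\Gamma,\Delta\vdash A$ in the $\{\multimap,\oc\}$-fragment of $\mathbf{ILLW}$, which is statement~(1). \textbf{The main obstacle} I anticipate is Step~2: making rigorous that no ``detour'' through $\otimes,\with,\oplus,\mathbf{1}$ or through the empty stoup is needed — i.e., that conservativity of $\mathbf{FL}_{\mathbf{ew}}[\sigma(\Gamma)]$ over $\mathbf{BCK}[\sigma(\Gamma)]$ on $\{\multimap\}$-sequents genuinely follows from cut-elimination in the presence of non-logical axioms. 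The cleanest route is probably to avoid direct surgery on $\mathbf{FL}_{\mathbf{ew}}$-proofs altogether: use Lemma~\ref{fleitoilzw}(2) to replace $(5)$ by ``$\oc\,\sigma(\Gamma),\Delta\vdash A$ provable in $\mathbf{ILZW}'$'', then invoke cut-elimination for $\mathbf{ILZW}'$ (Theorem~\ref{cut}) — whose cut-free proofs of a $\{\multimap,\oc\}$-sequent use only $\{\multimap,\oc\}$-rules and, being $\bot$-free with a non-empty succedent throughout, never use (W') — and finally observe that such a proof is exactly a $\{\multimap,\oc\}$-fragment-of-$\mathbf{ILLW}$ proof, delivering $(1)$ directly and thereby closing the cycle without ever analyzing $\mathbf{FL}_{\mathbf{ew}}$-proofs by hand.
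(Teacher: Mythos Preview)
Your ingredients are exactly the paper's, but the way you have arranged the implications leaves a small gap. The paper runs the cycle in the \emph{opposite} direction, $(1)\Rightarrow(2)\Rightarrow(3)\Rightarrow(4)\Rightarrow(5)\Rightarrow(1)$, with $(2)\Rightarrow(3)\Rightarrow(4)\Rightarrow(5)$ trivial. Your ``cleanest route'' at the end is precisely the paper's $(5)\Rightarrow(1)$: pass to $\mathbf{ILZW}'$ via Lemma~\ref{fleitoilzw}(2), eliminate cuts (Theorem~\ref{cut}), and observe that a cut-free proof of a $\{\multimap,\oc\}$-sequent with nonempty succedent already lives in the $\{\multimap,\oc\}$-fragment of $\mathbf{ILLW}$.

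The problem is that this cleanest route yields $(5)\Rightarrow(1)$, not $(5)\Rightarrow(2)$. Combined with Step~1's $(1)\Rightarrow(5)$, the trivial $(2)\Rightarrow(3)\Rightarrow(4)\Rightarrow(5)$, and Step~3's $(2)\Rightarrow(1)$, you obtain only $(1)\Leftrightarrow(5)$ together with $(2),(3),(4)\Rightarrow(5)$; the implication $(1)\Rightarrow(2)$ is never established, so the five statements are not yet mutually equivalent (and Step~3 becomes redundant). The easy fix---and this is exactly what the paper does---is to sharpen Step~1: once Lemma~\ref{ilzwtoflei}(1) gives $\Gamma^n,\Delta\vdash A$ provable in $\mathbf{FL}_{\mathbf{ei}}$, note that this is a pure $\{\multimap\}$-sequent, so by cut-elimination it is already provable in $\mathbf{BCK}$; cutting against the axioms in $\sigma(\Gamma)$ then yields $(2)$ directly. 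The paper isolates this as a separate Claim~(a), the $\mathbf{BCK}$/$\{\multimap,\oc\}$-analogue of Lemma~\ref{ilzwtoflei}. With that one extra sentence the cycle $(1)\Rightarrow(2)\Rightarrow\cdots\Rightarrow(5)\Rightarrow(1)$ closes, and your proof coincides with the paper's.
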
 

The above lemma provides a polynomial time reduction from the provability problem of $\oc$-prenex $\{\multimap\}$-sequents in the $\{\multimap,\oc\}$-fragment of $\mathbf{ILLW}$ to deducibility in $\mathbf{BCK}$, $\mathbf{FL}^+_{\mathbf{ei}}$, $\mathbf{FL}_{\mathbf{ei}}$, and $\mathbf{FL}_{\mathbf{ew}}$; hence by Corollary~\ref{illwistowerhard} we obtain the \textsc{Tower}-hardness of deducibility in these systems. 
In the same way as before, we can show:
\begin{Lem}
	\label{prenextran1}
	Let $\vdash \wn \Gamma,\Delta$ be a $\wn$-prenex $\{\otimes,\parr,\with,\oplus,\mathbf{1},\bot\}$-sequent.
	$\vdash \wn \Gamma,\Delta$ is provable in $\mathbf{LLW}$ if and only if $\vdash \Delta$ is provable in $\mathbf{InFL}_{\mathbf{ew}}[\sigma(\Gamma^{\bot})]$.
\end{Lem}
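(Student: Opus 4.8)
The plan is to derive the equivalence from Lemma~\ref{infleitollw} together with a short structural manipulation in $\mathbf{LLW}$. Put $\Phi:=\sigma(\Gamma^{\bot})$. Since the fragment $\{\otimes,\parr,\with,\oplus,\mathbf{1},\bot\}$ is closed under de~Morgan duality and literals dualize to literals, $\Phi$ is a finite set of classical $\{\otimes,\parr,\with,\oplus,\mathbf{1},\bot\}$-formulas and $\vdash\Delta$ is a sequent of the same fragment, so Lemma~\ref{infleitollw} applies and tells us that $\vdash\Delta$ is provable in $\mathbf{InFL}_{\mathbf{ew}}[\Phi]$ if and only if $\vdash\wn\Phi^{\bot},\Delta$ is provable in $\mathbf{LLW}$. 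Using $A^{\bot\bot}=A$ for every formula $A$, we have $\Phi^{\bot}=(\sigma(\Gamma^{\bot}))^{\bot}=\sigma(\Gamma)$, so this right-hand condition is exactly: $\vdash\wn\sigma(\Gamma),\Delta$ is provable in $\mathbf{LLW}$.

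It then remains to show that $\vdash\wn\Gamma,\Delta$ is provable in $\mathbf{LLW}$ if and only if $\vdash\wn\sigma(\Gamma),\Delta$ is. For the forward direction I would take a proof of $\vdash\wn\Gamma,\Delta$ and apply ($\wn$C) repeatedly, merging the several occurrences of each $\wn A$ with $A\in\sigma(\Gamma)$ into a single one, obtaining $\vdash\wn\sigma(\Gamma),\Delta$. For the converse, note that $\wn\Gamma$ is obtained from $\wn\sigma(\Gamma)$ by adjoining finitely many further copies of formulas $\wn A$ with $A\in\sigma(\Gamma)$ (one for each surplus occurrence of $A$ in the multiset $\Gamma$), and each such copy can be introduced by one application of (W); this turns a proof of $\vdash\wn\sigma(\Gamma),\Delta$ into one of $\vdash\wn\Gamma,\Delta$. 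Chaining the two equivalences yields the lemma. Alternatively, one can mirror the proof of Lemma~\ref{prenextrans2} directly: for the ``only if'' direction, use Lemma~\ref{llwtoinflei} to obtain a proof of $\vdash\Gamma^{n},\Delta$ in $\mathbf{InFL}_{\mathbf{ew}}$ for some $n\geq 0$ and then delete every occurrence of every $A$ from $\Gamma^{n}$ by cutting against the axiom $\vdash A^{\bot}$ of $\mathbf{InFL}_{\mathbf{ew}}[\sigma(\Gamma^{\bot})]$; for the ``if'' direction, induct on the $\mathbf{InFL}_{\mathbf{ew}}[\sigma(\Gamma^{\bot})]$-proof of $\vdash\Delta$, replacing each axiom $\vdash A^{\bot}$ with $A\in\sigma(\Gamma)$ by the sequent $\vdash\wn\Gamma,A^{\bot}$ (derivable from (Init) via ($\wn$) and (W)) and, at each two-premise rule, contracting the two copies of $\wn\Gamma$ produced by the induction hypothesis using ($\wn$C).

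I do not expect a genuine obstacle here: the argument is essentially routine once Lemma~\ref{infleitollw} is in place. The only points that need care are the de~Morgan bookkeeping---in particular the identity $(\sigma(\Gamma^{\bot}))^{\bot}=\sigma(\Gamma)$ and the fact that passing to supports stays inside the fragment---and the observation that moving back and forth between $\wn\Gamma$ and $\wn\sigma(\Gamma)$ genuinely uses both (W) and ($\wn$C); the former is exactly the weakening feature of $\mathbf{LLW}$ (unavailable in plain linear logic) that makes this reduction go through.
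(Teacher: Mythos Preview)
Your proposal is correct. The paper does not spell out a proof of this lemma but simply says ``in the same way as before'' (pointing to the proof of Lemma~\ref{prenextrans2}), and your alternative argument is precisely that: use Lemma~\ref{llwtoinflei} to obtain $\vdash\Gamma^n,\Delta$ in $\mathbf{InFL}_{\mathbf{ew}}$, cut away the copies of each $A\in\Gamma$ using the axioms $\vdash A^{\bot}$ of $\mathbf{InFL}_{\mathbf{ew}}[\sigma(\Gamma^{\bot})]$, and for the converse induct on the derivation. Your first argument is a slight variant: you invoke Lemma~\ref{infleitollw} directly (which already packages the two directions) and then only need the easy equivalence between provability of $\vdash\wn\Gamma,\Delta$ and $\vdash\wn\sigma(\Gamma),\Delta$ in $\mathbf{LLW}$ via $(\wn\mathrm{C})$ and (W). This is a cleaner route with exactly the same content; the de~Morgan bookkeeping $(\sigma(\Gamma^{\bot}))^{\bot}=\sigma(\Gamma)$ is unproblematic since $(-)^{\bot}$ is an involution.
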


By Corollary~\ref{llwistowerhard} and Lemma~\ref{prenextran1}, the deducibility problem for $\mathbf{InFL}_{\mathbf{ew}}$ is also hard for \tower. 
The deducibility problems for $\mathbf{BCK}$, $\mathbf{FL}^+_{\mathbf{ei}}$, $\mathbf{FL}_{\mathbf{ei}}$, $\mathbf{FL}_{\mathbf{ew}}$, and $\mathbf{InFL}_{\mathbf{ew}}$ are all in \tower\ by Corollaries~\ref{towerdeducibility} and~\ref{flewisintower}. 
We thus conclude:

\begin{Cor}\label{main}
	Each of the following decision problems is complete for \tower:
	\begin{itemize}
	    \item the deducibility problem for $\mathbf{BCK}$,
	    \item the deducibility problem for $\mathbf{FL}^+_{\mathbf{ei}}$,
	    \item the deducibility problem for $\mathbf{FL}_{\mathbf{ei}}$,
	    \item the deducibility problem for $\mathbf{FL}_{\mathbf{ew}}$,
	    \item the deducibility problem for $\mathbf{InFL}_{\mathbf{ew}}$.
	\end{itemize}
\end{Cor}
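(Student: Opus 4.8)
The plan is to split Corollary~\ref{main} into its two standard halves---membership in \tower\ and \tower-hardness---and in each case merely assemble results already established earlier in the paper, since by the time this corollary is reached all the substantive work has been done.

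For the upper bound nothing new is required: Corollary~\ref{towerdeducibility} already places the deducibility problems for $\mathbf{BCK}$, $\mathbf{FL}^+_{\mathbf{ei}}$, $\mathbf{FL}_{\mathbf{ei}}$ and $\mathbf{InFL}_{\mathbf{ew}}$ in \tower, and Corollary~\ref{flewisintower} does the same for $\mathbf{FL}_{\mathbf{ew}}$, so here it suffices to cite these.

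For the lower bound I would treat the four intuitionistic systems uniformly. Starting from Theorem~\ref{illwistowerhard}, which says that deciding provability of a $\oc$-prenex $\{\multimap\}$-sequent in the $\{\multimap,\oc\}$-fragment of $\mathbf{ILLW}$ is \tower-hard, I would describe the map that sends a $\oc$-prenex $\{\multimap\}$-sequent $\oc\Gamma,\Delta\vdash A$ to the deducibility instance whose set of non-logical axioms is $\sigma(\Gamma)$ (the set of formulas occurring in $\Gamma$) and whose query is $\Delta\vdash A$. This map is clearly computable in logarithmic space, hence is an elementary reduction, and its correctness is precisely the content of Lemma~\ref{prenextrans2}: clauses (1)--(5) there state that $\oc\Gamma,\Delta\vdash A$ is provable in the $\{\multimap,\oc\}$-fragment of $\mathbf{ILLW}$ if and only if $\Delta\vdash A$ is provable in $\mathbf{BCK}[\sigma(\Gamma)]$, and likewise for $\mathbf{FL}^+_{\mathbf{ei}}$, $\mathbf{FL}_{\mathbf{ei}}$ and $\mathbf{FL}_{\mathbf{ew}}$. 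Thus \tower-hardness of all four follows at once. For $\mathbf{InFL}_{\mathbf{ew}}$ I would run the analogous argument one level down: starting from Corollary~\ref{llwistowerhard} (provability of $\wn$-prenex $\{\otimes,\parr\}$-sequents in $\mathbf{LLW}$ is \tower-hard), noting that any such sequent is a fortiori a $\wn$-prenex $\{\otimes,\parr,\with,\oplus,\mathbf{1},\bot\}$-sequent so that Lemma~\ref{prenextran1} applies, and using the log-space reduction $\vdash\wn\Gamma,\Delta\mapsto$ (axioms $\sigma(\Gamma^{\bot})$, query $\vdash\Delta$), which by that lemma is correct. Combining the upper and lower bounds gives the stated \tower-completeness of all five problems.

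I do not expect a genuine obstacle at this point, since the two technically demanding ingredients---Theorem~\ref{illwistowerhard} and Lemma~\ref{prenextrans2}, together with their classical counterparts---have already been proved; what remains is bookkeeping. The only thing that deserves a moment's care is checking that the two reduction maps really satisfy the hypotheses of the prenex lemmas: that replacing $\Gamma$ by its support $\sigma(\Gamma)$ is harmless (which is exactly why those lemmas are phrased so as to absorb multiplicities), and that the sequents produced carry precisely the connective signatures the lemmas demand. Once that is verified, both directions of the statement are immediate.
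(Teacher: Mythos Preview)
Your proposal is correct and follows essentially the same approach as the paper: the upper bounds are taken directly from Corollaries~\ref{towerdeducibility} and~\ref{flewisintower}, the hardness for the four intuitionistic systems is obtained from Theorem~\ref{illwistowerhard} via the reduction supplied by Lemma~\ref{prenextrans2}, and the hardness for $\mathbf{InFL}_{\mathbf{ew}}$ is obtained from Corollary~\ref{llwistowerhard} via Lemma~\ref{prenextran1}. The only cosmetic difference is that the paper calls the reduction ``polynomial time'' rather than ``log-space'', which is immaterial since either is elementary.
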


\subsection{Tower-hardness of provability in elementary affine logic}\label{towerellw}
\begin{restatable}{Lem}{llwandellw}\label{llwandellw}
	Let $\Gamma$ be a finite multiset of classical $\{\otimes,\parr,\with,\oplus,\mathbf{1},\bot\}$-formulas and $A$ a classical $\{\otimes,\parr,\with,\oplus,\mathbf{1},\bot\}$-formula. 
	$\vdash \wn \Gamma,A$ is provable in $\mathbf{LLW}$ if and only if $\vdash \wn \Gamma,\oc A$ is provable in $\mathbf{ELLW}$.
\end{restatable}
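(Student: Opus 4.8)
The plan is to establish the two implications separately. The right-to-left direction is almost immediate, while the left-to-right direction carries the real content and will route through Lemma~\ref{llwtoinflei}.

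For the \emph{if} direction, suppose $\vdash \wn\Gamma,\oc A$ is provable in $\mathbf{ELLW}$. Since $\mathbf{ELLW}$ is a subsystem of $\mathbf{LLW}$, the same sequent is provable in $\mathbf{LLW}$. In $\mathbf{LLW}$ one derives $\vdash \wn A^{\bot},A$ by applying the dereliction rule $(\wn)$ to the axiom $\vdash A^{\bot},A$. As $(\oc A)^{\bot}=\wn A^{\bot}$, a single application of (Cut) to $\vdash \wn\Gamma,\oc A$ and $\vdash \wn A^{\bot},A$ yields $\vdash \wn\Gamma,A$ in $\mathbf{LLW}$, which is what we want.

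For the \emph{only if} direction, suppose $\vdash \wn\Gamma,A$ is provable in $\mathbf{LLW}$. This is a $\wn$-prenex $\{\otimes,\parr,\with,\oplus,\mathbf{1},\bot\}$-sequent (with the second block being the single formula $A$), so by Lemma~\ref{llwtoinflei} there is some $n\geq 0$ with $\vdash \Gamma^{n},A$ provable in $\mathbf{InFL}_{\mathbf{ew}}$. A proof in $\mathbf{InFL}_{\mathbf{ew}}$ uses only the multiplicative--additive rules and their constants, none of which was removed in passing from $\mathbf{LLW}$ to $\mathbf{ELLW}$; hence it is at the same time a proof in $\mathbf{ELLW}$, so $\vdash \Gamma^{n},A$ is provable in $\mathbf{ELLW}$. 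Now apply the functorial promotion rule $(F)$ to obtain $\vdash \wn(\Gamma^{n}),\oc A$, i.e.\ $\vdash (\wn\Gamma)^{n},\oc A$, in $\mathbf{ELLW}$. Finally, if $n=0$ use weakening $(W)$ to insert a copy of $\wn\Gamma$; if $n\geq 1$, finitely many applications of the $\wn$-contraction rule $(\wn C)$ collapse $(\wn\Gamma)^{n}$ down to $\wn\Gamma$. Either way we reach $\vdash \wn\Gamma,\oc A$ in $\mathbf{ELLW}$, completing the argument.

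The crux is the \emph{only if} direction, and the obstacle it sidesteps is that one cannot transform an $\mathbf{LLW}$-proof of $\vdash \wn\Gamma,A$ into an $\mathbf{ELLW}$-proof in a naive way: the dereliction rule $(\wn)$ is absent from $\mathbf{ELLW}$, and simply applying $(F)$ to $\vdash \wn\Gamma,A$ would produce $\vdash \wn\wn\Gamma,\oc A$, carrying an extra layer of modality that $\mathbf{ELLW}$ cannot remove, since $\wn\wn B$ and $\wn B$ are not interderivable in the stratified (elementary) setting. Passing through Lemma~\ref{llwtoinflei} avoids this entirely: it first eliminates the exponentials, trading $\wn\Gamma$ for explicit copies $\Gamma^{n}$, so that only after that step is a single layer of modality reintroduced by $(F)$, at which point ordinary $\wn$-contraction (and weakening) suffices. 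The routine parts are checking that the $\mathbf{InFL}_{\mathbf{ew}}$-proof is literally an $\mathbf{ELLW}$-proof and that $\wn(\Gamma^{n})=(\wn\Gamma)^{n}$.
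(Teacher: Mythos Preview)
Your proof is correct and follows essentially the same approach as the paper's: the \emph{if} direction via $\mathbf{ELLW}\subseteq\mathbf{LLW}$ and a cut against $\vdash \wn A^{\bot},A$, and the \emph{only if} direction via Lemma~\ref{llwtoinflei} followed by (F) and structural rules. You merely spell out in more detail what the paper compresses into ``using the rule of (F) and structural rules'' (namely the case split on $n=0$ versus $n\geq 1$), and your final explanatory paragraph is a helpful gloss but not part of the argument proper.
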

\begin{proof}
	Suppose that $\vdash \wn \Gamma, A$ is provable in $\mathbf{LLW}$. 
	By Lemma~\ref{llwtoinflei}, $\vdash \Gamma^n,A$ is provable in $\mathbf{InFL}_{\mathbf{ew}}$ for some $n$. 
	Obviously, $\vdash \Gamma^n,A$ is provable in $\mathbf{ELLW}$ for some $n$. 
	Using the rule of (F) and structural rules, we obtain a proof of $\vdash \wn \Gamma,\oc A$ in $\mathbf{ELLW}$.
	For the other direction, assume that $\vdash \wn \Gamma,\oc A$ is provable in $\mathbf{ELLW}$. 
	It is provable in $\mathbf{LLW}$ since every sequent that is provable in $\mathbf{ELLW}$ is provable in $\mathbf{LLW}$. 
	Recall that $\vdash \wn A^{\bot},A$ is provable in $\mathbf{LLW}$ whereas it is not provable in $\mathbf{ELLW}$.
	We therefore obtain a proof of $\vdash \wn \Gamma,A$ in $\mathbf{LLW}$ by a single application of (Cut).
\end{proof}

It follows from Theorem~\ref{BVASStoLL} and Lemma~\ref{llwandellw} that $\mathcal{B},Q_{\ell} \judge_{\ell} q_r,\vec 0$ if and only if $\vdash \wn \ulcorner T \urcorner,\wn Q_{\ell},\oc q_r^{\bot}$ is provable in $\mathbf{ELLW}$. 
Here the sequent $\vdash \wn \ulcorner T \urcorner,\wn Q_{\ell},\oc q_r^{\bot}$ is built from literals using only connectives in $\{\otimes,\parr,\oc,\wn\}$. 
We therefore obtain:

\begin{Cor}
	\label{prenextower1}
	The problem of determining if a given classical $\{\otimes,\parr,\oc,\wn\}$-sequent is provable in $\mathbf{ELLW}$ is  \textsc{Tower}-hard, and hence so is the provability problem for $\mathbf{ELLW}$.
\end{Cor}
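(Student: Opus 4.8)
The plan is to compose three facts already in hand: the \textsc{Tower}-hardness of the lossy reachability problem for ordinary BVASSs (Theorem~\ref{bvass1}), its reduction to provability of a $\wn$-prenex $\{\otimes,\parr\}$-sequent in $\mathbf{LLW}$ (Theorem~\ref{BVASStoLL}), and the ``$\oc$-wrapping'' equivalence between $\mathbf{LLW}$ and $\mathbf{ELLW}$ for $\wn$-prenex sequents (Lemma~\ref{llwandellw}). Together these yield a single reduction from lossy reachability for ordinary BVASSs into provability of $\{\otimes,\parr,\oc,\wn\}$-sequents in $\mathbf{ELLW}$.

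Concretely, I would fix an instance $(\mathcal{B},Q_\ell,q_r)$ with $\mathcal{B}=\langle Q,d,T_u,T_s,\emptyset,\emptyset\rangle$ and form the axiom set $T$ and the multiset $\ulcorner T\urcorner$ exactly as in the paragraph preceding the corollary. By Theorem~\ref{BVASStoLL}, $\mathcal{B},Q_\ell\judge_\ell q_r,\vec 0$ holds iff $\vdash\wn\ulcorner T\urcorner,\wn Q_\ell,q_r^\bot$ is provable in $\mathbf{LLW}$. The hypotheses of Lemma~\ref{llwandellw} are met by $\Gamma=\ulcorner T\urcorner,Q_\ell$ (a multiset of classical $\{\otimes,\parr\}$-formulas and literals) and $A=q_r^\bot$, so this is in turn equivalent to the provability of $\vdash\wn\ulcorner T\urcorner,\wn Q_\ell,\oc q_r^\bot$ in $\mathbf{ELLW}$. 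Hence the map sending $(\mathcal{B},Q_\ell,q_r)$ to $\vdash\wn\ulcorner T\urcorner,\wn Q_\ell,\oc q_r^\bot$ is a correct reduction into provability in $\mathbf{ELLW}$.

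It then remains only to observe two things. First, the target sequent is built from literals using only $\otimes,\parr,\oc,\wn$: each $\ulcorner t\urcorner$ is a $\otimes$-conjunction of positive state literals with the de Morgan dual $C^\bot$ of a classical $\{\otimes,\parr\}$-formula $C$, and such a dual is again a $\{\otimes,\parr\}$-formula, while prefixing by $\wn$ and adjoining $\oc q_r^\bot$ adds only $\wn$ and $\oc$. Second, the whole construction is computable in logarithmic space, hence is an elementary reduction. Since the source problem is \textsc{Tower}-hard and \textsc{Tower} is closed under elementary reductions, provability of classical $\{\otimes,\parr,\oc,\wn\}$-sequents in $\mathbf{ELLW}$ is \textsc{Tower}-hard, and therefore so is the unrestricted provability problem for $\mathbf{ELLW}$, of which it is a special case. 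I do not expect a genuine obstacle here, since all three ingredients are already established; the one point that deserves a moment's care is checking that $C^\bot$ stays within $\{\otimes,\parr\}$, which is immediate from the de Morgan clauses.
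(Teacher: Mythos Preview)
Your proposal is correct and follows essentially the same route as the paper: combine Theorem~\ref{bvass1}, Theorem~\ref{BVASStoLL}, and Lemma~\ref{llwandellw} to obtain the equivalence $\mathcal{B},Q_\ell\judge_\ell q_r,\vec 0$ iff $\vdash\wn\ulcorner T\urcorner,\wn Q_\ell,\oc q_r^\bot$ is provable in $\mathbf{ELLW}$, and then observe that the target is a classical $\{\otimes,\parr,\oc,\wn\}$-sequent. Your write-up is slightly more explicit than the paper's (you spell out why $C^\bot$ stays in $\{\otimes,\parr\}$ and why the reduction is elementary), but the argument is the same.
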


Combining this with Corollary~\ref{towermembership}, we obtain: 
\begin{Cor}
	The provability problem for $\mathbf{ELLW}$ is \textsc{Tower}-complete. 
\end{Cor}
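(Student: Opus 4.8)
The plan is simply to assemble the two halves that the preceding results already supply. For the \textsc{Tower} upper bound I would invoke Corollary~\ref{towermembership}, which states outright that the provability problem for $\mathbf{ELLW}$ is in \textsc{Tower}; recall that this rests on the negative translation of Theorem~\ref{illtoll6}, sending $\mathbf{ELLW}$ into $\mathbf{IEZW}$, followed by the elementary reduction of Theorem~\ref{iezwtoabvass} from $\mathbf{IEZW}$-provability to lossy reachability in the \abvass\ $\mathcal{A}^E_F$, which is in \textsc{Tower} by Theorem~\ref{abvass1}. Nothing new is needed on this side.

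For the lower bound I would cite Corollary~\ref{prenextower1}: it already asserts that even the restricted problem of deciding provability of classical $\{\otimes,\parr,\oc,\wn\}$-sequents in $\mathbf{ELLW}$ is \textsc{Tower}-hard, so a fortiori the unrestricted provability problem for $\mathbf{ELLW}$ is \textsc{Tower}-hard. Unwinding that corollary, the hardness comes from chaining the log-space reduction of Theorem~\ref{BVASStoLL}, from lossy reachability in ordinary BVASSs (\textsc{Tower}-hard by Theorem~\ref{bvass1}) to $\mathbf{LLW}$-provability of the $\wn$-prenex $\{\otimes,\parr\}$-sequent $\vdash \wn\ulcorner T\urcorner,\wn Q_\ell,q_r^\bot$, with the box-insertion of Lemma~\ref{llwandellw}, which turns this into the $\mathbf{ELLW}$-question $\vdash \wn\ulcorner T\urcorner,\wn Q_\ell,\oc q_r^\bot$.

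Finally, since \textsc{Tower} is closed under elementary reductions and \textsc{Tower}-completeness was defined relative to elementary reductions in Section~\ref{tower}, combining membership and hardness gives that provability in $\mathbf{ELLW}$ is \textsc{Tower}-complete; here it is worth remarking that the reductions feeding the hardness argument are in fact log-space, hence certainly elementary, so the completeness is with respect to the intended reducibility notion. There is essentially no obstacle in the corollary itself — all the weight has been front-loaded into the cited results; if anything, the delicate point to have double-checked earlier is the backward direction of Lemma~\ref{llwandellw}, where one recovers $\vdash \wn\Gamma,A$ from $\vdash \wn\Gamma,\oc A$ by cutting against $\vdash \wn A^\bot, A$ (provable in $\mathbf{LLW}$ but not in $\mathbf{ELLW}$), and the fact that the $\mathcal{A}^E_F$ construction using $(\oc\mathsf{F})$ in place of $(\oc\mathsf{D}),(\oc\mathsf{P})$ faithfully captures functorial promotion.
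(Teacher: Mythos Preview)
Your proposal is correct and matches the paper's approach exactly: the paper obtains the corollary by simply combining Corollary~\ref{towermembership} (membership in \textsc{Tower}) with Corollary~\ref{prenextower1} (\textsc{Tower}-hardness). Your additional unwinding of what those corollaries rest on is accurate but more detailed than what the paper itself supplies here.
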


Clearly, provability in the multiplicative-exponential fragment of $\mathbf{ELLW}$ is also complete for \textsc{Tower}.
The same holds for a very small fragment of $\mathbf{IELW}$:
\begin{restatable}{Thm}{impellw}\label{impellw}
	The provability problem for the $\{\multimap,\oc\}$-fragment of $\mathbf{IELW}$ is \textsc{Tower}-complete.
\end{restatable}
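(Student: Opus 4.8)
The plan is to prove the two halves separately. \textsc{Tower}-membership is immediate: the $\{\multimap,\oc\}$-fragment of $\mathbf{IELW}$ is a fragment of $\mathbf{IEZW}$ --- since $\multimap$ and $\oc$ both lie in $\mathcal{L}^+$, it is literally the $\{\multimap,\oc\}$-fragment of $\mathbf{IEZW}$ --- so by Corollary~\ref{towermembership} together with the remark following it, which records that the \textsc{Tower} upper bound is inherited by fragments of $\mathbf{IEZW}$, its provability problem lies in \textsc{Tower}.

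For \textsc{Tower}-hardness I would reduce from the problem, shown \textsc{Tower}-hard in Corollary~\ref{prenextower1}, of deciding whether a classical $\{\otimes,\parr,\oc,\wn\}$-sequent $\vdash\Gamma$ is provable in $\mathbf{ELLW}$. Given such a $\Gamma$, pick a fresh propositional variable $x$ not occurring in $\Gamma$ and apply the parametric negative translation $(\_)^{[x]}$: by Theorem~\ref{illtoll5}(2), $\vdash\Gamma$ is provable in $\mathbf{ELLW}$ if and only if $\Gamma^{[x]}\vdash x$ is provable in $\mathbf{IELW}$. The key syntactic observation is that, when applied to formulas built from literals using only $\otimes,\parr,\oc,\wn$ and with parameter $F=x$, every clause defining $(\_)^{[F]}$ produces a formula whose only connectives are $\multimap$ and $\oc$ --- in particular no additive connective, no unit, and no $\bot$ ever appears --- so $\Gamma^{[x]}\vdash x$ is in fact a legitimate intuitionistic $\{\multimap,\oc\}$-sequent of $\mathbf{IELW}$.

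It then remains to descend to the $\{\multimap,\oc\}$-fragment and to present the question as provability of a single formula. By the cut-elimination theorem (Theorem~\ref{cut} and its extension to fragments), a cut-free $\mathbf{IELW}$-proof of $\Gamma^{[x]}\vdash x$ enjoys the subformula property, and since every subformula of a $\{\multimap,\oc\}$-formula is again a $\{\multimap,\oc\}$-formula, such a proof uses only the rules (Init), ($\multimap$L), ($\multimap$R), (W), ($\oc$C) and ($\oc$F); hence $\Gamma^{[x]}\vdash x$ is provable in $\mathbf{IELW}$ iff it is provable in the $\{\multimap,\oc\}$-fragment. Writing $\Gamma^{[x]}$ as the multiset $A_1,\dots,A_k$, iterated applications of ($\multimap$R) together with its invertibility (a routine consequence of (Cut) and (Init)) show this is equivalent to provability of the single $\{\multimap,\oc\}$-formula $A_1\multimap(\cdots\multimap(A_k\multimap x))$. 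The resulting map $\vdash\Gamma\ \mapsto\ A_1\multimap\cdots\multimap A_k\multimap x$ is computable in polynomial time --- the translation blows up size only by a constant factor per connective --- hence is an elementary reduction, and with Corollary~\ref{prenextower1} this yields \textsc{Tower}-hardness, completing the proof. I expect the only point requiring care to be the case analysis verifying that $(\_)^{[x]}$ really stays within $\{\multimap,\oc\}$ and respects the $\bot$-free restriction of $\mathbf{IELW}$, together with the (standard, subformula-property based) conservativity of $\mathbf{IELW}$ over its $\{\multimap,\oc\}$-fragment, which is not spelled out verbatim in the excerpt.
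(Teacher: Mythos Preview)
Your proposal is correct and follows essentially the same route as the paper: \textsc{Tower}-membership from Corollary~\ref{towermembership}, and \textsc{Tower}-hardness by reducing the $\{\otimes,\parr,\oc,\wn\}$-fragment of $\mathbf{ELLW}$ (Corollary~\ref{prenextower1}) via the translation $(\_)^{[x]}$ (Theorem~\ref{illtoll5}(2)) and the cut-elimination-based conservativity of $\mathbf{IELW}$ over its $\{\multimap,\oc\}$-fragment. The only difference is that you explicitly package the sequent $\Gamma^{[x]}\vdash x$ into a single formula via iterated $(\multimap\mathrm{R})$, which the paper leaves implicit; this is a harmless extra bit of care rather than a different approach.
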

\begin{proof}
    Similar to the proof of Theorem~\ref{illwistowerhard}. 
    The problem is clearly in \tower\ due to Corollary~\ref{towermembership}. 
    Let $\vdash \Gamma$ be a classical $\{\otimes,\parr,\oc,\wn\}$-sequent and $x$ a fresh variable not occurring in this sequent.
    By Theorem~\ref{illtoll5}, $\vdash \Gamma$ is provable in $\mathbf{ELLW}$ if and only if $\Gamma^{[x]} \vdash x$ is provable in $\mathbf{IELW}$. 
    By cut elimination for $\mathbf{IELW}$, we know that $\mathbf{IELW}$ is a conservative extension of its $\{\multimap,\oc\}$-fragment.
    Thus $\Gamma^{[x]} \vdash x$ is provable in $\mathbf{IELW}$ if and only if it is provable in the $\{\multimap,\oc\}$-fragment of $\mathbf{IELW}$.
    Consequently, $\vdash \Gamma$ is provable in $\mathbf{ELLW}$ if and only if $\Gamma^{[x]} \vdash x$ is provable in the $\{\multimap,\oc\}$-fragment of $\mathbf{IELW}$.
    Hence there is a polynomial time reduction from the problem of whether a given classical $\{\otimes,\parr,\oc,\wn\}$-sequent is provable in $\mathbf{ELLW}$ to provability in the $\{\multimap,\oc\}$-fragment of $\mathbf{IELW}$.
    By Corollary~\ref{prenextower1}, provability in the $\{\multimap,\oc\}$-fragment of $\mathbf{IELW}$ is hard for \tower. 
\end{proof}

\section{Concluding remarks}\label{conclusion}
We have shown the \tower-completeness of deducibility in some contraction-free substructural logics without modal operators. 
We hope that our work sheds new light on computational aspects of fuzzy logic. 
This is because $\mathbf{FL}_{\mathbf{ew}}$ forms a theoretical basis for a wide range of fuzzy logics.
In fact, one can construct from $\mathbf{FL}_{\mathbf{ew}}$ various fuzzy logics (such as monoidal t-norm based logic, basic logic, weak nilpotent minimum logic, \L ukasiewicz logic and product logic) by adding some new axioms, cf.\ \cite{EG01,Haj98}.
Remarkably, the latter four of the above examples are shown to be coNP-complete with respect to both provability and deducibility; see \cite{Han11} for a detailed survey. 
Together with those facts, our result suggests that there is a critical difference between $\mathbf{FL}_{\mathbf{ew}}$ and its fuzzy extensions with respect to computational complexity. 

We summarize the known complexity results in Tables \ref{table1} and \ref{table2}.
We adopt the notation of combinatory logic in Table~\ref{table1}:
\begin{itemize}
\item $\mathbf{BCI}$ $=$ the implicational fragment of intuitionistic linear logic,
\item $\mathbf{BCIW}$ $=$ the extension of $\mathbf{BCI}$ by the rule of contraction, 
\item $\mathbf{BCK}$ $=$ the extension of $\mathbf{BCI}$ by the rule of weakening. 
\end{itemize}
$\mathbf{BCIW}$ is usually denoted by $\mathbf{R}_{\rightarrow}$ in the relevance logic community. 
$\mathbf{IL}_{\rightarrow}$ stands for the extension of $\mathbf{BCI}$ by weakening and contraction. 
It is nothing but the implicational fragment of intuitionistic propositional logic ($\mathbf{IL}$).
In Table~\ref{table2}, $\mathbf{FL}_{\mathbf{e}}$ (resp.\ $\mathbf{FL}_{\mathbf{ec}}$) denotes the extension of $\mathbf{BCI}$ (resp.\ $\mathbf{BCIW}$) by the connectives $\otimes$, $\with$, $\oplus$, $\mathbf{1}$, and $\bot$. 

Below we comment on some results not covered in the main sections.

\begin{table}[t]
	\caption{Complexity results of extensions of $\mathbf{BCI}$.}
    \label{table1}
    \begin{center}
	\begin{tabular}{lccc}
	\hline
	& Provability & Deducibility \\
	\hline \hline
	$\mathbf{BCI}$ & \textsc{NP}-complete \cite{Bus08}& open (\textsc{Ackermann}-hard, cf.\ Corollary~\ref{ackbci})\\
	$\mathbf{BCIW}$ & 2-\textsc{ExpTime}-complete \cite{Sch216}& decidable (2-\textsc{ExpTime}-hard, cf.\ \cite{Sch216})\\
	$\mathbf{BCK}$ &\textsc{NP}-complete (Corollary~\ref{np})& \textsc{Tower}-complete (Corollary~\ref{main})\\
	$\mathbf{IL}_{\rightarrow}$ &\textsc{Pspace}-complete \cite{Sta79} & \textsc{Pspace}-complete \cite{Sta79}\\
	\hline
\end{tabular}
\end{center}
\end{table}

\begin{table}[t]
	\caption{Complexity results of extensions of $\mathbf{FL}_{\mathbf{e}}$.}
    \label{table2}
    \begin{center}
\begin{tabular}{lccc}
	\hline
	& Provability & Deducibility \\
	\hline \hline
	$\mathbf{FL}_{\mathbf{e}}$ & \textsc{Pspace}-complete \cite{LMSS92} & undecidable \cite{LMSS92} \\
	$\mathbf{FL}_{\mathbf{ec}}$ & \textsc{Ackermann}-complete, cf.\ \cite{LS15,Urq99}& \textsc{Ackermann}-complete, cf.\ \cite{LS15,Urq99}\\
	$\mathbf{FL}_{\mathbf{ei}}$ &\textsc{Pspace}-complete, cf.\ \cite{HT11}& \textsc{Tower}-complete (Corollary~\ref{main})\\
	$\mathbf{FL}_{\mathbf{ew}}$ &\textsc{Pspace}-complete, cf.\ \cite{HT11}& \textsc{Tower}-complete (Corollary~\ref{main})\\
	$\mathbf{FL}_{\mathbf{ewc}}$ ($=\mathbf{IL}$) &\textsc{Pspace}-complete \cite{Sta79} & \textsc{Pspace}-complete \cite{Sta79}\\
	\hline
\end{tabular}
\end{center}
\end{table}

\subparagraph{Ackermannian complexity and deducibility in BCI.}
In~\cite{Sch16} Schmitz also defined a non-primitive recursive complexity class by: 
$$
\textsc{Ackermann}:=\bigcup_{f \in \mathsf{FPR}} \textsc{DTime}(Ack(f(n)))
$$
where $Ack$ is the Ackermannian function and $\mathsf{FPR}$ denotes the set of primitive recursive functions.
The class contains the class \textsc{PR} of primitive recursive problems, and is closed under primitive recursive reductions. 
As in \cite{LS15,Sch16}, we define the notion of \emph{\textsc{Ackermann}-hardness} using primitive recursive reductions.

The decidability of deducibility in $\textbf{BCI}$ is a long-standing open problem.
However, one reviewer pointed out that the \textsc{Ackermann}-hardness of deducibility in \textbf{BCI} follows from the recent breakthrough by Czerwi\'nski and Orlikowski~\cite{CO21}, and Leroux~\cite{Ler21}, who independently proved that the reachability problem for VASSs is \textsc{Ackermann}-hard.

Let us sketch the proof of the Ackermannian lower bound for deducibility in $\mathbf{BCI}$. 
Clearly, by the aforementioned result in~\cite{CO21,Ler21}, reachability in BVASSs is also hard for \textsc{Ackermann}.
As is the case of lossy reachability for \abvass s, the BVASS-reachability problem is reduced to reachability in ordinary BVASSs (cf.\ Section~\ref{tower}); see~\cite[Lemma~3.5]{LS15} for details.
Furthermore, reachability in ordinary BVASSs can be efficiently encoded to the problem of whether classical linear logic ($\mathbf{LL}$), i.e., $\mathbf{LLW}$ without the rule of (W), proves a given $\wn$-prenex $\{\otimes,\parr\}$-sequent; see~\cite[Section~4.2.2]{LS15}.  
To sum up, the following holds:

\begin{Thm}[\cite{CO21,LS15,Ler21}]\label{ackLL}
The problem of determining if a given $\wn$-prenex $\{\otimes,\parr\}$-sequent is provable in $\mathbf{LL}$ is \textsc{Ackermann}-hard.
\end{Thm}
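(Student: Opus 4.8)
\textbf{Proof proposal for Theorem~\ref{ackLL}.}
The plan is to assemble the lower bound from three already-available ingredients and to check that every reduction in the chain is primitive recursive, so that closure of \textsc{Ackermann} under primitive recursive reductions (recalled above) applies. The starting point is the result of Czerwi\'nski--Orlikowski~\cite{CO21} and of Leroux~\cite{Ler21}: the reachability problem for vector addition systems with states is \textsc{Ackermann}-hard. A VASS is a degenerate BVASS---one whose set of split rules is empty---and general VASS reachability (from an arbitrary source configuration to an arbitrary target configuration) reduces in logarithmic space to the ``$\vec 0$-to-$\vec 0$'' form used in Section~\ref{tower}: prepend unary rules through fresh states that load the source vector, and append unary rules that drain the target vector, ending in a single designated leaf state placed in $Q_{\ell}$. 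Hence the reachability problem for BVASSs, in the sense $\mathcal{B},Q_{\ell}\judge q_r,\vec 0$, is already \textsc{Ackermann}-hard.

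Next I would invoke the normalisation of Lazi\'c and Schmitz: by~\cite[Lemma~3.5]{LS15}, reachability in arbitrary BVASSs reduces to reachability in \emph{ordinary} BVASSs---those in which every unary update is $\vec e_i$ or $-\vec e_i$---by replacing each update by a path of unit increments and decrements routed through fresh intermediate states. This is the non-lossy counterpart of the reduction underlying Theorem~\ref{bvass1}; the argument is purely syntactic and runs in polynomial, hence primitive recursive, time. Therefore reachability in ordinary BVASSs is \textsc{Ackermann}-hard. Finally I would run the encoding of~\cite[Section~4.2.2]{LS15}, the non-lossy analogue of the construction recalled in the excerpt just before Theorem~\ref{BVASStoLL}: given an ordinary BVASS $\mathcal{B}=\langle Q,d,T_u,T_s,\emptyset,\emptyset\rangle$, $Q_{\ell}\subseteq Q$ and $q_r\in Q$, one forms the literals $Q\cup\{e_i\}$, the non-logical axioms $T$ (one $\wn$-free $\{\otimes,\parr\}$-sequent per rule of $T_u\cup T_s$), and the multiset $\ulcorner T\urcorner$ exactly as before; then $\mathcal{B},Q_{\ell}\judge q_r,\vec 0$ if and only if $\vdash\wn\ulcorner T\urcorner,\wn Q_{\ell},q_r^{\bot}$ is provable in $\mathbf{LL}$. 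Since this sequent is a $\wn$-prenex $\{\otimes,\parr\}$-sequent and the encoding is log-space, composing the three reductions gives the claim.

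I do not expect a genuine obstacle, since the heavy lifting is carried out in~\cite{CO21,Ler21} and~\cite{LS15}; the one point that genuinely differs from the $\mathbf{LLW}$ development of the excerpt---and hence the place to be careful---is the soundness and completeness of the weakening-free encoding. Working in $\mathbf{LL}$ rather than $\mathbf{LLW}$ is precisely what matters: the rule (W) in $\mathbf{LLW}$ is what makes provability mirror \emph{lossy} reachability by permitting resources to be discarded, so removing it is exactly what forces provability of $\vdash\wn\ulcorner T\urcorner,\wn Q_{\ell},q_r^{\bot}$ to capture \emph{exact} reachability. Once this is verified (it follows the cut-elimination-based proof-search analysis of~\cite{LS15}), closure of \textsc{Ackermann} under primitive recursive reductions yields that provability of $\wn$-prenex $\{\otimes,\parr\}$-sequents in $\mathbf{LL}$ is \textsc{Ackermann}-hard.
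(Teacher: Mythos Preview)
Your proposal is correct and follows essentially the same chain of reductions as the paper: VASS reachability is \textsc{Ackermann}-hard by~\cite{CO21,Ler21}, hence so is BVASS reachability, which reduces to ordinary BVASS reachability by~\cite[Lemma~3.5]{LS15}, which in turn reduces to $\mathbf{LL}$-provability of $\wn$-prenex $\{\otimes,\parr\}$-sequents by~\cite[Section~4.2.2]{LS15}. Your additional remarks---the explicit normalisation of VASS reachability to the $\vec 0$-to-$\vec 0$ form and the observation that dropping (W) is precisely what forces the encoding to capture exact rather than lossy reachability---are sound elaborations that the paper leaves implicit.
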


The translation in Section~\ref{simple} also holds between $\mathbf{LL}$ and $\mathbf{ILL}$ (i.e., $\mathbf{ILLW}$ without the unrestricted left-weakening); see Appendix~\ref{detailstranslation} for a sketch of a proof.

\begin{restatable}{Thm}{illtoll}\label{illtoll8}
	Let $\vdash \Gamma$ be a classical $\mathcal{L}_C$-sequent and $x$ a fresh propositional variable not occurring in $\Gamma$. $\vdash \Gamma$ is provable in $\mathbf{LL}$ if and only if $\Gamma^{[x]} \vdash x$ is provable in $\mathbf{ILL}$.
\end{restatable}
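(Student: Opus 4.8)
The plan is to re-run the proof of Theorem~\ref{illtoll5}(1) almost verbatim, exploiting the fact that the parametric negative translation $(\_)^{[x]}$ was designed to be insensitive to weakening: in that proof the unrestricted left-weakening rule (W) of $\mathbf{LLW}$ is invoked only in order to simulate (W) itself, and (W) is absent from $\mathbf{LL}$. Every piece of additive-constant bookkeeping there is carried out with the rules ($\top$R), ($\mathbf{0}$L) of $\mathbf{ILZW}$ (respectively ($\top$) of $\mathbf{LLW}$), all of which survive in $\mathbf{ILL}$ (respectively $\mathbf{LL}$) and are \emph{not} weakening rules.

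For the \emph{only if} direction I would, after invoking cut-elimination for $\mathbf{LL}$ (or else treating the (Cut) rule directly by means of the lemma below), induct on a cut-free $\mathbf{LL}$-proof of $\vdash \Gamma$ to build an $\mathbf{ILL}$-proof of $\Gamma^{[x]} \vdash x$. The base case (Init), $\vdash A, A^{\bot}$, needs an auxiliary \emph{orthogonality lemma}: for every classical $\mathcal{L}_C$-formula $A$, the sequent $A^{[x]}, (A^{\bot})^{[x]} \vdash x$ is provable in $\mathbf{ILL}$; this is proved by induction on $A$, the atomic and multiplicative cases using only ($\multimap$L), ($\multimap$R), the $\oc$-case using ($\oc$P), ($\oc$D), ($\oc$C), and the additive-constant cases using ($\mathbf{0}$L), ($\top$R), ($\mathbf{1}$L), ($\mathbf{1}$R). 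Each remaining rule of cut-free $\mathbf{LL}$ --- ($\mathbf{1}$), ($\top$), ($\bot$), ($\otimes$), ($\parr$), ($\with$), ($\oplus$1), ($\oplus$2), ($\wn$), ($\oc$), ($\wn$C) --- is matched by the same short $\mathbf{ILL}$-derivation used in the proof of Theorem~\ref{illtoll5}: for instance ($\otimes$) is handled by turning the hypothesis $\Gamma^{[x]}, A^{[x]} \vdash x$ into $\Gamma^{[x]} \vdash A^{[x]} \multimap x$ by ($\multimap$R) and then applying ($\multimap$L) against $\Delta^{[x]}, B^{[x]} \vdash x$, whereas promotion, dereliction and contraction of $\wn$/$\oc$ correspond to ($\oc$P), ($\oc$D), ($\oc$C) --- none of these being weakening steps.

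For the \emph{if} direction I would use the standard embedding of $\mathbf{ILL}$ into $\mathbf{LL}$: an $\mathbf{ILL}$-proof of $\Gamma^{[x]} \vdash x$ gives an $\mathbf{LL}$-proof of $\vdash (\Gamma^{[x]})^{\circ\bot}, x$, where $(\cdot)^{\circ}$ is the usual one-sided translation of intuitionistic $\mathcal{L}^+$-formulas (sending $B \multimap C$ to $B^{\circ\bot} \parr C^{\circ}$ and acting homonymously elsewhere) and $(\cdot)^{\circ\bot}$ is applied formulawise to $\Gamma^{[x]}$. As $x$ is fresh, substituting $x \mapsto \bot$ (hence $x^{\bot} \mapsto \mathbf{1}$) is sound for $\mathbf{LL}$ and produces a proof of $\vdash \Gamma^{\sharp}, \bot$ for the resulting classical multiset $\Gamma^{\sharp}$; inverting the ($\bot$) rule yields $\vdash \Gamma^{\sharp}$. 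It then remains to establish a \emph{retraction lemma}: for every classical $A$, the formula obtained from $(A^{[x]})^{\circ\bot}$ by the substitution $x \mapsto \bot$ implies $A$ in $\mathbf{LL}$ (in fact the two are interprovable). This is again an induction on $A$, weakening-free, the only mildly delicate cases being the additive constants --- e.g.\ for $A = \mathbf{0}$ the lemma reduces to $\mathbf{0} \otimes \mathbf{1} \dashv\vdash \mathbf{0}$ in $\mathbf{LL}$, which uses the ($\top$) rule and not weakening --- and the exponential $\oc$. Cutting these implications into $\vdash \Gamma^{\sharp}$ finally gives $\vdash \Gamma$, as desired.

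The main obstacle is not any single ingenious step but the bookkeeping: one has to re-inspect the proof of Theorem~\ref{illtoll5} line by line and check that every appeal to weakening is either the (now vacuous) simulation of (W), or an instance of the surviving rules ($\top$R), ($\mathbf{0}$L), ($\top$), which are \emph{not} weakening. In practice the only places requiring genuine care are the additive-constant and exponential cases of the orthogonality lemma and of the retraction lemma; everything else transcribes directly. This is exactly what makes the parametric translation $(\_)^{[x]}$, as opposed to $(\_)^{[\bot]}$, robust to the removal of weakening --- which is why it is the right translation to use here.
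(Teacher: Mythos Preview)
Your proposal is correct and follows the paper's approach: re-prove the linear analogues of Lemmas~\ref{illtoll2}--\ref{illtoll1} and assemble them exactly as in the proof of Theorem~\ref{illtoll5}. The one subtlety the paper flags, and which you do not name, is the axiom $x \parr \mathbf{1}$ in $\Phi_x$: this is what replaces unrestricted weakening in the linear version of Lemma~\ref{illtoll3}, and in your arrangement (substituting $x \mapsto \bot$ \emph{before} the retraction lemma) it is absorbed into uses of the $(\bot)$ rule of $\mathbf{LL}$ --- so the delicate cases of your retraction lemma are not only the additive constants and $\oc$ but every case where the translation produces an outermost $\neg_x$ (including the atomic case $A=p$, which needs $\vdash p^{\bot},\bot,p$).
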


Using this, we show the intuitionistic version of Theorem~\ref{ackLL}.\footnote{In~\cite{dGS04} de~Groote, Guillaume and Salvati provided a reduction from BVASS-reachability to provability of $\oc$-prenex $\{\multimap\}$-sequents in $\mathbf{ILL}$. Thus one can also show Corollaries~\ref{ackill} and~\ref{ackbci}, using the results in \cite{CO21,dGS04,Ler21}.}
\begin{Cor}[\cite{CO21,dGS04,Ler21}]\label{ackill}
The problem of determining if a given $\oc$-prenex $\{\multimap\}$-sequent is provable
 in $\mathbf{ILL}$ is \textsc{Ackermann}-hard.
\end{Cor}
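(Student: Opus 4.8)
The plan is to mimic the proof of Theorem~\ref{illwistowerhard}, but starting from the \textsc{Ackermann}-hard problem supplied by Theorem~\ref{ackLL} and using the weakening-free translation of Theorem~\ref{illtoll8} in place of Theorem~\ref{illtoll5}. Concretely, I would reduce from the problem of whether a given $\wn$-prenex $\{\otimes,\parr\}$-sequent is provable in $\mathbf{LL}$, which is \textsc{Ackermann}-hard by Theorem~\ref{ackLL}. Note that, unlike the \tower-hardness argument, no appeal to conservativity over a fragment is needed here, since the target problem is stated for $\mathbf{ILL}$ itself.

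First I would take an arbitrary $\wn$-prenex $\{\otimes,\parr\}$-sequent $\vdash\wn\Gamma,\Delta$ and a fresh propositional variable $x$ not occurring in $\wn\Gamma,\Delta$. By Theorem~\ref{illtoll8}, $\vdash\wn\Gamma,\Delta$ is provable in $\mathbf{LL}$ if and only if $(\wn\Gamma,\Delta)^{[x]}\vdash x$ is provable in $\mathbf{ILL}$. The next step is to check that $(\wn\Gamma,\Delta)^{[x]}\vdash x$ is a $\oc$-prenex $\{\multimap\}$-sequent. Inspecting the clauses defining $(\_)^{[x]}$: each literal is sent to a $\{\multimap\}$-formula (namely $p\mapsto p\multimap x$ and $p^{\bot}\mapsto p$); the clauses for $\otimes$ and $\parr$ introduce only $\multimap$, since $\neg_x(\_)$ abbreviates $\_\multimap x$; and no constant or additive connective occurs anywhere in a $\{\otimes,\parr\}$-formula, so none is created. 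Hence $\Delta^{[x]}$ is a finite multiset of $\{\multimap\}$-formulas, and the succedent $x$ is itself a $\{\multimap\}$-formula. Finally, the only clause that applies to $\wn$-prefixed formulas is $(\wn B)^{[x]}:=\oc B^{[x]}$, so $(\wn\Gamma)^{[x]}$ equals $\oc\Gamma^{[x]}$ for a multiset $\Gamma^{[x]}$ of $\{\multimap\}$-formulas. Thus $(\wn\Gamma,\Delta)^{[x]}\vdash x$ has the form $\oc\Gamma^{[x]},\Delta^{[x]}\vdash x$, which is precisely a $\oc$-prenex $\{\multimap\}$-sequent.

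It then remains to note that the map $\vdash\wn\Gamma,\Delta\mapsto\bigl(\oc\Gamma^{[x]},\Delta^{[x]}\vdash x\bigr)$ is computable in polynomial time---the translation increases formula size only by a constant factor per connective---hence is in particular a primitive recursive reduction. Combined with the equivalence from Theorem~\ref{illtoll8} and the \textsc{Ackermann}-hardness of the source problem (Theorem~\ref{ackLL}), this exhibits a primitive recursive many-one reduction to provability of $\oc$-prenex $\{\multimap\}$-sequents in $\mathbf{ILL}$; since \textsc{Ackermann}-hardness is defined with respect to primitive recursive reductions, the target problem is \textsc{Ackermann}-hard.

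I do not expect a genuine obstacle here: everything rests on results established earlier in the paper. The only real work is the routine syntactic verification that $(\_)^{[x]}$ keeps us inside the $\{\multimap\}$-fragment and turns the $\wn$-prenex shape into a $\oc$-prenex shape---which must be carried out carefully, in particular confirming that translating a $\{\otimes,\parr\}$-formula never introduces a stray constant or an $\oplus$. For completeness I would also mention the alternative route indicated in the footnote: de~Groote, Guillaume and Salvati~\cite{dGS04} give a direct reduction from BVASS-reachability to provability of $\oc$-prenex $\{\multimap\}$-sequents in $\mathbf{ILL}$, so composing it with the \textsc{Ackermann}-hardness of BVASS-reachability from \cite{CO21,Ler21} yields the same conclusion without passing through $\mathbf{LL}$.
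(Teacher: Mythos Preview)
Your proposal is correct and follows essentially the same approach as the paper: reduce from Theorem~\ref{ackLL} via the translation of Theorem~\ref{illtoll8}, observing that the image of a $\wn$-prenex $\{\otimes,\parr\}$-sequent is a $\oc$-prenex $\{\multimap\}$-sequent. Your version is somewhat more detailed (explicitly checking the syntactic shape and the complexity of the reduction, and noting that no conservativity step is needed), but the argument is the same.
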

\begin{proof}
    By Theorem~\ref{ackLL}, it suffices to show that there is a reduction from the provability of $\wn$-prenex $\{\otimes,\parr\}$-sequents in $\mathbf{LL}$ to the provability of $\oc$-prenex $\{\multimap\}$-sequents in $\mathbf{ILL}$. 
    The proof is essentially the same as that of Theorem~\ref{illwistowerhard}, but we use Theorem~\ref{illtoll8} instead of Theorem~\ref{illtoll5}. 
    Let $\vdash \wn\Gamma,\Delta$ be a $\wn$-prenex $\{\otimes,\parr\}$-sequent and $x$ a fresh propositional variable not in the sequent. 
    By Theorem~\ref{illtoll8}, $\vdash \wn\Gamma,\Delta$ is provable in $\mathbf{LL}$ if and only if $(\wn\Gamma,\Delta)^{[x]} \vdash x$ is provable in $\mathbf{ILL}$.
    Obviously, the latter sequent is a $\oc$-prenex implicational sequent.
\end{proof}

\begin{Cor}\label{ackbci}
The deducibility problem for $\mathbf{BCI}$ is \textsc{Ackermann}-hard.
\end{Cor}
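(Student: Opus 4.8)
The plan is to reduce from the problem that Corollary~\ref{ackill} shows to be \textsc{Ackermann}-hard, namely deciding whether a given $\oc$-prenex $\{\multimap\}$-sequent is provable in $\mathbf{ILL}$, following the route from Theorem~\ref{illwistowerhard} through Lemma~\ref{prenextrans2} that produced the \textsc{Tower}-hardness of $\mathbf{BCK}$-deducibility. The heart of the matter is the $\mathbf{ILL}$/$\mathbf{BCI}$ analogue of the equivalence of items (1) and (2) of Lemma~\ref{prenextrans2}: for every $\oc$-prenex $\{\multimap\}$-sequent $\oc\Gamma,\Delta\vdash A$, I would show that $\oc\Gamma,\Delta\vdash A$ is provable in $\mathbf{ILL}$ if and only if $\Delta\vdash A$ is provable in $\mathbf{BCI}[\sigma(\Gamma)]$. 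Granting this, the reduction is immediate: writing $\Delta=C_1,\ldots,C_k$ and $F:=C_1\multimap(C_2\multimap\cdots(C_k\multimap A)\cdots)$, invertibility of ($\multimap$R) in $\mathbf{BCI}[\sigma(\Gamma)]$---which holds because (Cut) is a rule of that calculus---shows that $\oc\Gamma,\Delta\vdash A$ is provable in $\mathbf{ILL}$ iff $\vdash F$ is provable in $\mathbf{BCI}[\sigma(\Gamma)]$, so $\oc\Gamma,\Delta\vdash A\mapsto(\sigma(\Gamma),F)$ is the desired reduction. It is computable in polynomial time, hence primitive recursive, and since \textsc{Ackermann} is closed under primitive recursive reductions, $\mathbf{BCI}$-deducibility is \textsc{Ackermann}-hard.

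To prove the equivalence, for the \emph{if} direction I would take a proof of $\Delta\vdash A$ in $\mathbf{BCI}[\sigma(\Gamma)]$ and replace each use of a non-logical axiom $\vdash B$ (with $B\in\sigma(\Gamma)$) by the $\mathbf{ILL}$-derivation of $\oc B\vdash B$ supplied by ($\oc$D); the $\oc$-formulas thereby introduced merely accumulate in the antecedent as one carries them through the ($\multimap$L), ($\multimap$R) and (Cut) steps, yielding an $\mathbf{ILL}$-proof of $\oc\Sigma,\Delta\vdash A$ for some multiset $\Sigma$ over $\sigma(\Gamma)$, after which ($\oc$C) and weakening for $\oc$-formulas rewrite $\oc\Sigma$ into $\oc\Gamma$. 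For the \emph{only-if} direction I would use cut-elimination for $\mathbf{ILL}$ together with the subformula property: since $\Delta$, $A$ and the members of $\Gamma$ are $\oc$-free $\{\multimap\}$-formulas, the only $\oc$-formulas occurring in a cut-free proof of $\oc\Gamma,\Delta\vdash A$ are of the form $\oc B$ with $B\in\sigma(\Gamma)$. I would then send each sequent $\oc\Sigma,\Delta'\vdash\Pi$ occurring in such a proof (with $\Sigma$ a multiset over $\sigma(\Gamma)$) to the $\mathbf{BCI}[\sigma(\Gamma)]$-sequent $\Delta'\vdash\Pi'$, where $\Pi'=B$ if $\Pi=\oc B$ and $\Pi'=\Pi$ otherwise, and verify by induction on the proof that the translated sequent is provable in $\mathbf{BCI}[\sigma(\Gamma)]$: (Init), ($\multimap$L) and ($\multimap$R) are routine; ($\oc$C), ($\oc$P) and weakening for $\oc$-formulas leave the translated sequent literally unchanged; and ($\oc$D) is absorbed by a single cut against the axiom $\vdash B$. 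Since $A$ is $\oc$-free, the endsequent translates to $\Delta\vdash A$, completing the argument.

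The point to get right---rather than a single hard step---is that passing from the $\mathbf{ILLW}$/$\mathbf{BCK}$ setting of Lemma~\ref{prenextrans2} to $\mathbf{ILL}$ and $\mathbf{BCI}$ does not wreck the equivalence, and in particular one must keep in view that $\mathbf{ILL}$, while lacking the unrestricted weakening rule, still admits weakening for $\oc$-formulas, and that it is exactly this restricted weakening on which the \emph{if} direction depends when it discards the members of $\Gamma$ that the $\mathbf{BCI}$-proof never touches (an unused non-logical axiom of $\mathbf{BCI}[\sigma(\Gamma)]$ corresponds to precisely one such weakening step); dually, the \emph{only-if} direction relies on $\mathbf{BCI}[\sigma(\Gamma)]$ retaining (Cut). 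Beyond that, the work is bookkeeping paralleling the proof of Lemma~\ref{prenextrans2}---verifying that ($\oc$P) is genuinely inert under the translation (its premise already translates to the axiom $\vdash B$) and that the subformula analysis really confines all $\oc$-subformulas to $\{\oc B\mid B\in\sigma(\Gamma)\}$---so I would transcribe that proof with its appeals to unrestricted weakening deleted. I would also note that the construction never leaves the $\{\multimap,\oc\}$-fragment of $\mathbf{ILL}$ or $\mathbf{BCI}$ proper, so no connective outside $\{\multimap\}$ is ever introduced.
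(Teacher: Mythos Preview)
Your proposal is correct and follows essentially the same route as the paper: reduce from the $\mathbf{ILL}$-provability of $\oc$-prenex $\{\multimap\}$-sequents (Corollary~\ref{ackill}) via the equivalence ``$\oc\Gamma,\Delta\vdash A$ provable in $\mathbf{ILL}$ iff $\Delta\vdash A$ provable in $\mathbf{BCI}[\sigma(\Gamma)]$'', with the two directions handled by induction on a $\mathbf{BCI}[\sigma(\Gamma)]$-derivation and on a cut-free $\mathbf{ILL}$-proof respectively. Your explicit packaging of $\Delta\vdash A$ into a single formula $F$ via invertibility of $(\multimap\mathrm{R})$ and your remark that only the $\oc$-restricted weakening of $\mathbf{ILL}$ is needed are details the paper leaves implicit, but they do not change the argument.
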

\begin{proof}
    In view of Corollary~\ref{ackill}, our goal is to show that: for any $\oc$-prenex $\{\multimap\}$-sequent $\oc \Gamma,\Delta \vdash C$, $\Delta \vdash C$ is provable in $\mathbf{BCI}[\sigma(\Gamma)]$ if and only if $\oc\Gamma,\Delta \vdash C$ is provable in $\mathbf{ILL}$.
    
    The proof of the \emph{if} part goes by induction on the length of the cut-free proof of $\oc\Gamma,\Delta \vdash C$ in $\mathbf{ILL}$. 
    We only analyze the case where the rule of ($\oc$D) is applied last. 
    If $\oc\Gamma,\oc A,\Delta \vdash C$ is obtained from $\oc\Gamma,A,\Delta\vdash C$ by an application of ($\oc$D), then by the induction hypothesis, $A,\Delta\vdash C$ is provable in $\mathbf{ILL}[\sigma(\Gamma)]$; thus it is also provable in $\mathbf{ILL}[\sigma(\Gamma,A)]$. 
    Since $\vdash A$ is a non-logical axiom of $\mathbf{ILL}[\sigma(\Gamma,A)]$, we may construct a derivation of $\Delta \vdash C$ in $\mathbf{ILL}[\sigma(\Gamma,A)]$ by using (Cut). 
    The remaining cases are similar.

    The converse direction is shown by induction on the length of the derivation for $\Delta \vdash C$ in $\mathbf{BCI}[\sigma(\Gamma)]$. 
    All cases are straightforward.
\end{proof}

\subparagraph{Tower-hardness of light affine logic.}
We have also shown the \tower-completeness of provability in elementary affine logic. 
On the other hand, one can define another subsystem of affine logic, called \emph{intuitionistic light affine logic} ($\mathbf{ILAL}$)~\cite{Asp98,Ter02}, which characterizes polynomial time functions. 
We obtain it from the $\{\multimap,\oc\}$-fragment of $\mathbf{ILLW}$ by dropping the rules for ($\oc$D) and ($\oc$P), and by adding a new unary connective ``$\S$'' and the following inference rules:
\begin{center}
\begin{tabular}{c@{\hspace{4em}}c}
\AxiomC{$E \vdash A$}
\RightLabel{($\oc$)}
\UnaryInfC{$\oc E \vdash \oc A$}
\DisplayProof
&
\AxiomC{$\Gamma,\Delta\vdash A$}
\RightLabel{($\S$)}
\UnaryInfC{$\oc\Gamma,\S\Delta\vdash\S A$}
\DisplayProof
\end{tabular}
\end{center}
where $E$ is a formula or the empty multiset.
It is known that provability in $\mathbf{ILAL}$ is decidable because Terui proved that it has the finite model property; see~\cite[Corollary~7.45]{Ter02}. 
We show that there is no elementary recursive algorithm for solving the provability problem for $\mathbf{ILAL}$:

\begin{Lem}
	Let $\oc \Gamma,\Delta \vdash A$ be a $\oc$-prenex $\{\multimap\}$-sequent. 
	$\oc \Gamma,\Delta \vdash A$ is provable in the $\{\multimap,\oc\}$-fragment of $\mathbf{ILLW}$ if and only if $\oc \Gamma,\S \Delta \vdash \S A$ is provable in $\mathbf{ILAL}$.
\end{Lem}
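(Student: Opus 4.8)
The plan is to prove the two directions separately, in each case transforming a proof in one system into a proof in the other by pushing the new modality $\S$ through the derivation. For the \emph{only-if} direction I would start with a cut-free proof of $\oc\Gamma,\Delta\vdash A$ in the $\{\multimap,\oc\}$-fragment of $\mathbf{ILLW}$ (cut-elimination is available by Theorem~\ref{cut} applied to this fragment). The key observation is that in such a cut-free proof every sequent is a $\oc$-prenex $\{\multimap\}$-sequent: all formulas on the left that carry a $\oc$ sit in the prenex block, which is never touched except by $(\oc\mathsf{D})$, weakening, or being carried along, and the connective $\multimap$ does not create new $\oc$'s. I would then define a translation on proofs that decorates each sequent $\oc\Sigma,\Pi\vdash B$ occurring in the derivation as $\oc\Sigma,\S\Pi\vdash\S B$, and check that each inference rule of the fragment is simulated: $(\multimap\mathsf{R})$ becomes $(\multimap\mathsf{R})$ preceded and followed by $(\S)$-bookkeeping, $(\multimap\mathsf{L})$ similarly, left-weakening of a prenex formula $\oc C$ is still available in $\mathbf{ILAL}$, and $(\oc\mathsf{D})$ turning $\oc C$ into $C$ is mimicked using the $(\S)$ rule (which allows a $\oc$-formula on the left while producing $\S$ on the right). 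Since $\mathbf{ILAL}$ retains full weakening in the $\{\multimap,\oc\}$ part, the nonlinear use of the prenex formulas poses no difficulty.

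**The converse direction.** For the \emph{if} direction I would again invoke cut-elimination, this time for $\mathbf{ILAL}$ (Terui's system enjoys cut-elimination, and in any case we only need the weaker consequence that provability of a $\S$-sequent of this shape reflects back). Given a cut-free proof of $\oc\Gamma,\S\Delta\vdash\S A$, the crucial structural fact is that $\S$ behaves like a \emph{single-layer} modality: in a cut-free derivation, the $\S$ on the right can only be introduced by the $(\S)$ rule, and below that rule every right-hand formula is $\S$-prefixed; moreover the $(\oc)$ rule cannot be applied with a $\S$-formula in its premise. So the proof has a characteristic shape where, after erasing all occurrences of $\S$ (replacing $\S B$ by $B$ uniformly), each $(\S)$-step collapses to a trivial step or to nothing, $(\multimap\mathsf{R})$/$(\multimap\mathsf{L})$ survive, and $(\oc)$ becomes the $\mathbf{ILLW}$ promotion/dereliction pattern. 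One then checks that the erasure of a valid $\mathbf{ILAL}$ proof of $\oc\Gamma,\S\Delta\vdash\S A$ yields a valid $\{\multimap,\oc\}$-fragment-of-$\mathbf{ILLW}$ proof of $\oc\Gamma,\Delta\vdash A$; the only rule whose erasure needs care is $(\oc)$ with $E$ the empty multiset, which erases to $(\oc\mathsf{P})$ with empty context, still a legal rule. Weakening in $\mathbf{ILAL}$ erases to left-weakening in $\mathbf{ILLW}$.

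**Main obstacle.** The delicate point is the direction from $\mathbf{ILAL}$ back to $\mathbf{ILLW}$: I must argue that a cut-free $\mathbf{ILAL}$ proof of a sequent of the special form $\oc\Gamma,\S\Delta\vdash\S A$ never genuinely exploits the stratification discipline of $\S$ in a way that is lost upon erasure — in particular that the $(\S)$ rule is only ever applied in the "outermost" position and that no nested $\S$ or interaction between $\oc$ and $\S$ inside subformulas can arise, which is guaranteed precisely because $\Gamma,\Delta,A$ are $\{\multimap\}$-formulas and $\S$ never appears in the endsequent except as the top connective on the succedent. Making this subformula/stratification bookkeeping precise — essentially showing that the box-depth of every formula in the proof stays at most one and that $\S$-depth stays at most one — is the technical heart; once it is in place, the erasure map and its inverse are routine rule-by-rule simulations. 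I would phrase this via a lemma on the shape of cut-free $\mathbf{ILAL}$ proofs whose endsequent has at most one layer of modality, perhaps citing the relevant structural observations from Terui~\cite{Ter02}.
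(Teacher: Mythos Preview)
Your \emph{if} direction is essentially the paper's: define the erasure $p^-=p$, $(B\multimap C)^-=B^-\multimap C^-$, $(\oc B)^-=\oc B^-$, $(\S B)^-=B^-$, and check by induction on \emph{arbitrary} $\mathbf{ILAL}$-proofs that provability of $\Sigma\vdash C$ yields provability of $\Sigma^-\vdash C^-$ in the $\{\multimap,\oc\}$-fragment of $\mathbf{ILLW}$. You over-engineer this: no cut-elimination for $\mathbf{ILAL}$ and no box-depth bookkeeping is needed. The rule $(\S)$ erases to iterated ($\oc$D), and the rule $(\oc)$---whether $E$ is empty or not---erases to ($\oc$D) followed by ($\oc$P); (Cut) erases to (Cut).

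Your \emph{only-if} direction has a genuine gap. The proposed rule-by-rule decoration $\oc\Sigma,\Pi\vdash B \ \mapsto\ \oc\Sigma,\S\Pi\vdash\S B$ does not survive the $\multimap$-rules. From the translated premise $\oc\Sigma,\S C,\S\Pi\vdash\S B$ of ($\multimap$R), all $\mathbf{ILAL}$ lets you derive is $\oc\Sigma,\S\Pi\vdash \S C\multimap\S B$, not $\oc\Sigma,\S\Pi\vdash\S(C\multimap B)$, and these formulas are not interderivable in $\mathbf{ILAL}$; the same obstruction hits ($\multimap$L). No ``$(\S)$-bookkeeping'' can repair this, because $(\S)$ is a box rule acting once on an entire sequent, not a modality distributing over $\multimap$. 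Your simulation of ($\oc$D) fails for the same reason: you would have to pass from $\oc\Sigma,\S C,\S\Pi\vdash\S B$ to $\oc C,\oc\Sigma,\S\Pi\vdash\S B$, and $\mathbf{ILAL}$ has no rule turning a $\S$-formula on the left into a $\oc$-formula.

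The paper sidesteps this by first eliminating the prenex $\oc$'s globally. By the implicational analogue of Lemma~\ref{ilzwtoflei} (stated as Claim~(a) in the proof of Lemma~\ref{prenextrans2}), provability of $\oc\Gamma,\Delta\vdash A$ in the $\{\multimap,\oc\}$-fragment of $\mathbf{ILLW}$ yields $\Gamma^n,\Delta\vdash A$ provable in $\mathbf{BCK}$ for some $n$. That $\mathbf{BCK}$ proof is already an $\mathbf{ILAL}$ proof; a \emph{single} application of $(\S)$, sending the $\Gamma^n$ block to the $\oc$-side, gives $\oc\Gamma^n,\S\Delta\vdash\S A$, and then ($\oc$C) and (W) contract and pad to $\oc\Gamma,\S\Delta\vdash\S A$. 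The idea you are missing is that all $\multimap$-reasoning must be completed \emph{before} the unique application of $(\S)$; trying to interleave them is exactly where your local simulation breaks.
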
 
\begin{proof}
	Our proof is inspired by the proof of the undecidability of provability in light linear logic by Terui; see~\cite[Section 2.3.2]{Ter02}. 
	For any intuitionistic $\{\multimap,\oc,\S\}$-formula $A$, we inductively define the intuitionistic $\{\multimap,\oc\}$-formula $A^-$ by $p^-=p$, $(B \multimap C)^-=B^-\multimap C^-$, $(\oc B)^-=\oc B^-$, and $(\S B)^-=B^-$.
	It is easy to see that if $\Sigma \vdash C$ is provable in $\mathbf{ILAL}$ then $\Sigma^- \vdash C^-$ is provable in the  $\{\multimap,\oc\}$-fragment of  $\mathbf{ILLW}$, for any sequent $\Sigma \vdash C$ of $\mathbf{ILAL}$. 
	This is checked by induction on the size of proofs.
	Thus the \emph{if} direction holds. 
	The \emph{only-if} direction follows from Claim (a) used in the proof of Lemma~\ref{prenextrans2} (see Appendix~\ref{appenproofhard}), the rule of $(\S)$, and the structural rules. 	
\end{proof}

By Theorem~\ref{illwistowerhard}, the provability problem for $\mathbf{ILAL}$ is hard for \tower.
We strongly believe that this problem is in \tower.

\bibliography{lipics-v2021-sample-article}

\appendix

\section{Proof of Theorem~\ref{illtoll5}}\label{detailstranslation}
For any intuitionistic $\mathcal{L}$-formula $A$, we define the classical $\mathcal{L}_C$-formula $\underline{A}$ as follows:
\begin{align*}
\underline{p}&:=p & \underline{c}&:=c\\
\underline{B \multimap C}&:=(\underline{B})^{\bot} \parr \underline{C} & \underline{B \circ C}&:=\underline{B} \circ \underline{C}\\
\underline{\oc B}&:=\oc \underline{B}
\end{align*}
where $c \in \{\mathbf{1},\bot,\top,\mathbf{0}\}$ and $\circ \in \{\otimes,\with,\oplus\}$. 

A \emph{substitution} is a map which assigns to each propositional variable a classical $\mathcal{L}_C$-formula. 
Every substitution $\tau$ uniquely extends to a map on the set of classical $\mathcal{L}_C$-formulas by $\tau(q^{\bot})=\tau(q)^{\bot}$, $\tau(c) = c$, $\tau(B \star C) = \tau(B) \star \tau(C)$, and $\tau(\iota B) =\iota\tau(B)$, where $c \in \{\mathbf{1},\bot,\top,\mathbf{0}\}$, $\star\in \{\otimes,\parr,\with,\oplus\}$ and $\iota\in\{\oc,\wn\}$. 
By induction on the height of proofs, one can show the two lemmas below:

\begin{Lem}\label{illtoll2}
	Let $\Gamma \vdash A$ be an intuitionistic $\mathcal{L}^+$-sequent. If $\Gamma \vdash A$ is provable in $\mathbf{ILLW}$ (resp.\ $\mathbf{IELW}$), then $\vdash (\underline{\Gamma})^{\bot},\underline{A}$ is provable in $\mathbf{LLW}$ (resp.\ $\mathbf{ELLW}$).
\end{Lem}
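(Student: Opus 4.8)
The plan is to prove the lemma by induction on the height of a derivation of $\Gamma\vdash A$ in $\mathbf{ILLW}$ (resp.\ $\mathbf{IELW}$), showing that every inference rule is mirrored by a single rule of $\mathbf{LLW}$ (resp.\ $\mathbf{ELLW}$). First I would extend $\underline{\cdot}$ to finite multisets by $\underline{\Gamma}:=\{\,\underline{B}\mid B\in\Gamma\,\}$ (with multiplicities), write $\underline{\Gamma}^{\bot}$ for the multiset of the de~Morgan duals of its members, and record the identity $(\underline{B}^{\bot})^{\bot}=\underline{B}$, immediate from $C=C^{\bot\bot}$. Since $\mathbf{ILLW}$ is $\bot$-free, every sequent occurring in a derivation has the form $\Gamma\vdash A$, so the target of the translation is always the classical sequent $\vdash\underline{\Gamma}^{\bot},\underline{A}$. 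The key feature of $\underline{\cdot}$ is that a left occurrence of $B$ is sent to a right occurrence of $\underline{B}^{\bot}$, while $B\multimap C$ becomes $(\underline{B})^{\bot}\parr\underline{C}$, so that $(\underline{B\multimap C})^{\bot}=\underline{B}\otimes\underline{C}^{\bot}$; this is what makes $(\multimap\mathrm{R})$ translate to $(\parr)$ and the two-premise rule $(\multimap\mathrm{L})$ to $(\otimes)$, and likewise for the other connectives.

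Next I would run through the cases. For the axioms, $(\mathrm{Init})$ on $A\vdash A$ maps to the instance $\vdash\underline{A}^{\bot},\underline{A}$ of $(\mathrm{Init})$ in $\mathbf{LLW}$; $(\mathbf{1}\mathrm{R})$ maps to $(\mathbf{1})$; $(\top\mathrm{R})$ on $\Gamma\vdash\top$ maps to $\vdash\underline{\Gamma}^{\bot},\top$; and $(\mathbf{0}\mathrm{L})$ on $\mathbf{0},\Gamma\vdash C$ maps to $\vdash\top,\underline{\Gamma}^{\bot},\underline{C}$ (since $\underline{\mathbf{0}}^{\bot}=\top$), the last two being instances of $(\top)$. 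For the inductive cases I apply the induction hypothesis to the premise(s) and then close with: $(\parr)$ for $(\multimap\mathrm{R})$ and $(\otimes\mathrm{L})$; $(\otimes)$ for $(\multimap\mathrm{L})$ and $(\otimes\mathrm{R})$; $(\with)$ for $(\with\mathrm{R})$ and $(\oplus\mathrm{L})$; $(\oplus i)$ for $(\with\mathrm{L}i)$ and $(\oplus\mathrm{R}i)$; $(\bot)$ for $(\mathbf{1}\mathrm{L})$ (since $\mathbf{1}^{\bot}=\bot$); $(\mathrm{W})$ for left-weakening; and $(\mathrm{Cut})$ for $(\mathrm{Cut})$. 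For the exponential rules, $(\oc\mathrm{D})$ on $\oc A,\Gamma\vdash C$ is handled by applying $(\wn)$ to the hypothesis $\vdash\underline{A}^{\bot},\underline{\Gamma}^{\bot},\underline{C}$ (using $(\oc\underline{A})^{\bot}=\wn\underline{A}^{\bot}$); $(\oc\mathrm{P})$ derives $\oc\Gamma\vdash\oc A$ from $\oc\Gamma\vdash A$, and the hypothesis $\vdash\wn\underline{\Gamma}^{\bot},\underline{A}$ yields $\vdash\wn\underline{\Gamma}^{\bot},\oc\underline{A}$ by $(\oc)$; and $(\oc\mathrm{C})$ is handled by $(\wn\mathrm{C})$. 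In every case the resulting $\mathbf{LLW}$-sequent is precisely the translation of the conclusion, up to reordering of multisets.

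For the $\mathbf{IELW}\to\mathbf{ELLW}$ half, the only difference is that $(\oc\mathrm{D})$ and $(\oc\mathrm{P})$ are absent and replaced by functorial promotion $(\oc\mathrm{F})$, which derives $\oc\Gamma\vdash\oc A$ from $\Gamma\vdash A$; from the hypothesis $\vdash\underline{\Gamma}^{\bot},\underline{A}$ one obtains the required $\vdash\wn\underline{\Gamma}^{\bot},\oc\underline{A}$ by a single application of $(\mathrm{F})$, and all the remaining rules — including $(\oc\mathrm{C})$, $(\mathrm{W})$, $(\mathrm{Cut})$ and the propositional rules — survive in $\mathbf{ELLW}$ and are treated exactly as above, so the induction goes through verbatim. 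I expect no genuine obstacle; the only things to be careful about are the bookkeeping of de~Morgan duals — that a left context $\Gamma$ becomes the right-hand multiset $\underline{\Gamma}^{\bot}$, that $\multimap$ on the right corresponds after negation to $\otimes$ on the left, and that dereliction and promotion land on the $\wn$- and $\oc$-side correctly — and a quick check, for the second half, that the modal and structural rules of $\mathbf{IELW}$ really do match those of $\mathbf{ELLW}$ under $\underline{\cdot}$. Together with its companion lemma this feeds into the proof of Theorem~\ref{illtoll5} once $\underline{\cdot}$ is related to $(\_)^{[x]}$.
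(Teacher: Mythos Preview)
Your proposal is correct and follows exactly the approach the paper indicates: the paper simply asserts that the lemma is shown ``by induction on the height of proofs,'' and you have supplied the routine rule-by-rule verification of that induction. The case analysis you give (mapping $(\multimap\mathrm{R})$ and $(\otimes\mathrm{L})$ to $(\parr)$, $(\multimap\mathrm{L})$ and $(\otimes\mathrm{R})$ to $(\otimes)$, the exponential rules to their $\wn$/$\oc$ counterparts, and $(\oc\mathrm{F})$ to $(\mathrm{F})$ in the elementary case) is exactly what the omitted induction unfolds to.
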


\begin{Lem}\label{illtoll4}
	Let $\vdash \Gamma$ be a classical $\mathcal{L}_C$-sequent, $\tau$ a substitution, and $\Phi$ a finite set of classical $\mathcal{L}_C$-formulas. If $\vdash \Gamma$ is provable in $\mathbf{LLW}[\Phi]$ (resp.\ $\mathbf{ELLW}[\Phi]$), then $\vdash \tau(\Gamma)$ is provable in $\mathbf{LLW}[\tau(\Phi)]$ (resp.\ $\mathbf{ELLW}[\tau(\Phi)]$).
\end{Lem}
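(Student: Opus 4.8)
The plan is to prove the lemma by induction on the height of the given proof of $\vdash \Gamma$ in $\mathbf{LLW}[\Phi]$, replaying that proof while applying $\tau$ pointwise to every sequent occurring in it; this yields a proof of $\vdash \tau(\Gamma)$ in $\mathbf{LLW}[\tau(\Phi)]$. The $\mathbf{ELLW}[\Phi]$ case will be completely parallel, with the rule (F) treated exactly as ($\wn$) and ($\oc$) are treated for $\mathbf{LLW}$. Before starting the induction I would first record the auxiliary fact that the extended substitution $\tau$ commutes with de Morgan duality, i.e.\ $\tau(A^{\bot}) = \tau(A)^{\bot}$ for every classical $\mathcal{L}_C$-formula $A$. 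This is a short sub-induction on $A$: for a variable $q$ it is the defining clause $\tau(q^{\bot}) = \tau(q)^{\bot}$; for $q^{\bot}$ it follows from $q^{\bot\bot} = q$; for a formula $B \star C$ with $\star \in \{\otimes,\parr,\with,\oplus\}$ one expands $(B \star C)^{\bot}$ by the corresponding de Morgan clause, pushes $\tau$ inside using $\tau(B \star C) = \tau(B) \star \tau(C)$, applies the inductive hypothesis to $B$ and $C$, and folds the result back up; the case $\iota B$ with $\iota \in \{\oc,\wn\}$ is identical.

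For the base cases, an instance $\vdash A, A^{\bot}$ of (Init) is carried to $\vdash \tau(A), \tau(A^{\bot}) = \vdash \tau(A), \tau(A)^{\bot}$, again an instance of (Init); the axioms $\vdash \mathbf{1}$ and $\vdash \Gamma, \top$ are carried to $\vdash \mathbf{1}$ and $\vdash \tau(\Gamma), \top$, which remain axioms; and a non-logical axiom $\vdash B$ with $B \in \Phi$ is carried to $\vdash \tau(B)$, which is a non-logical axiom of $\mathbf{LLW}[\tau(\Phi)]$ since $\tau(B) \in \tau(\Phi)$. For the inductive step I would go through the rules of $\mathbf{LLW}$ one at a time; in every case the decisive point is that $\tau$ commutes with the connectives, $\tau(B \star C) = \tau(B) \star \tau(C)$ and $\tau(\iota B) = \iota \tau(B)$, and (via the auxiliary fact) with duality, so that applying $\tau$ to the premises and conclusion of a rule instance produces another valid instance of the same rule. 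For example, if $\vdash \Gamma, A \otimes B$ is inferred from $\vdash \Gamma, A$ and $\vdash \Delta, B$ by ($\otimes$), the induction hypothesis supplies proofs of $\vdash \tau(\Gamma), \tau(A)$ and $\vdash \tau(\Delta), \tau(B)$, and one application of ($\otimes$) gives $\vdash \tau(\Gamma), \tau(\Delta), \tau(A) \otimes \tau(B)$, which is exactly $\vdash \tau(\Gamma), \tau(\Delta), \tau(A \otimes B)$. The rule (Cut) is where the auxiliary fact is used: from $\vdash \tau(\Gamma), \tau(A)$ and $\vdash \tau(A^{\bot}), \tau(\Delta) = \vdash \tau(A)^{\bot}, \tau(\Delta)$ a single cut gives $\vdash \tau(\Gamma), \tau(\Delta)$. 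The remaining multiplicative and additive rules are dispatched identically, and the exponential rules (W), ($\wn$C), ($\wn$), ($\oc$) go through because $\tau(\wn \Sigma) = \wn \tau(\Sigma)$ and $\tau(\oc \Sigma) = \oc \tau(\Sigma)$ for every multiset $\Sigma$, so that in particular the $\wn$-context restriction in ($\oc$) (and in (F) for $\mathbf{ELLW}$) is respected.

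The argument involves no genuine difficulty: the bookkeeping is entirely mechanical once one observes that a single substitution $\tau$ can be propagated uniformly through the whole derivation. The only step that deserves a moment's care is the commutation of $\tau$ with de Morgan duality, which I would prove first, since it is precisely what makes (Init), (Cut), and the promotion rules survive the translation.
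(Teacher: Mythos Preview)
Your proposal is correct and follows exactly the approach the paper indicates: the paper simply states that the lemma is shown ``by induction on the height of proofs'' without giving any details, and you have spelled out precisely that induction, including the key auxiliary fact $\tau(A^{\bot}) = \tau(A)^{\bot}$ needed for (Init) and (Cut). There is one small slip of the pen---in the $(\otimes)$ example you write the conclusion as $\vdash \Gamma, A \otimes B$ rather than $\vdash \Gamma, \Delta, A \otimes B$---but the subsequent line is correct, so this is clearly just a typo.
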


For each classical $\mathcal{L}_C$-formula $A$, we write $\Phi_A$ for the set containing only the two formulas $A^{\bot}$ and $A \parr \mathbf{1}$. 
By straightforward induction on formulas, we can show the following two claims:

\begin{Lem}\label{ax}
	Let $A$ be a classical $\mathcal{L}_C$-formula and $F$ an intuitionistic $\mathcal{L}^+$-formula. $A^{\jump{F}},(A^{\bot})^{\jump{F}}\vdash F$ is provable in $\mathbf{ILLW}$ and $\mathbf{IELW}$.
\end{Lem}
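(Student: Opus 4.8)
The plan is to argue by induction on the number of connectives in $A$, using the facts that $A$ and $A^{\bot}$ always have the same size and that the target sequent $A^{\jump{F}},(A^{\bot})^{\jump{F}}\vdash F$ is literally unchanged (the antecedent being a multiset) when $A$ is replaced by $A^{\bot}$, since $A^{\bot\bot}=A$. Hence it suffices to treat one representative of each de~Morgan dual pair: the literal $p$ (covering $p^{\bot}$), the constants $\mathbf 1$ (covering $\bot$) and $\top$ (covering $\mathbf 0$), and the connectives $\otimes$ (covering $\parr$), $\with$ (covering $\oplus$) and $\oc$ (covering $\wn$). Throughout I would only use rules available to both systems: (Init), ($\mathbf 1$R), ($\mathbf 1$L), ($\mathbf 0$L), ($\multimap$L), ($\multimap$R), ($\oplus$L), ($\oplus$R1), ($\oplus$R2), and the functorial promotion ($\oc$F), which is primitive in $\mathbf{IELW}$ and derivable in $\mathbf{ILLW}$ from ($\oc$D) and ($\oc$P); a single derivation then serves both logics simultaneously.

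The base cases are immediate: $\neg_F p,p\vdash F$ from (Init) and ($\multimap$L); $\mathbf 0,\neg_F\mathbf 0\vdash F$ from ($\mathbf 0$L) for $\top$; and $\neg_F\mathbf 1,\mathbf 1\vdash F$ from ($\mathbf 1$R), (Init), ($\multimap$L) and ($\mathbf 1$L) for $\mathbf 1$. For the inductive step, write $b=B^{\jump{F}}$, $b^{*}=(B^{\bot})^{\jump{F}}$, $c=C^{\jump{F}}$, $c^{*}=(C^{\bot})^{\jump{F}}$; the induction hypothesis gives $b,b^{*}\vdash F$ and $c,c^{*}\vdash F$. For $A=B\otimes C$ the goal is $(b\multimap F)\multimap c,\ (b^{*}\multimap(c^{*}\multimap F))\multimap F\vdash F$, which I would prove by first turning the first hypothesis into $b^{*}\vdash b\multimap F$ via ($\multimap$R), then applying ($\multimap$L) to $(b\multimap F)\multimap c$ with the two hypotheses routed into its two premises to obtain $(b\multimap F)\multimap c,b^{*},c^{*}\vdash F$, then ($\multimap$R) twice, and finally one ($\multimap$L) against the second antecedent together with (Init). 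For $A=B\with C$ the goal is $b\oplus c,\ (\neg_F b^{*}\oplus\neg_F c^{*})\multimap F\vdash F$, which yields to ($\oplus$L) and then, in each branch, to ($\multimap$L), followed by ($\oplus$R1) or ($\oplus$R2), ($\multimap$R) and the appropriate hypothesis. For $A=\oc B$ the goal is $(\oc(b\multimap F))\multimap F,\ \oc b^{*}\vdash F$; applying ($\multimap$L) against the first antecedent reduces it to $\oc b^{*}\vdash\oc(b\multimap F)$, which is ($\oc$F) applied to $b^{*}\vdash b\multimap F$, itself ($\multimap$R) on the hypothesis $b,b^{*}\vdash F$.

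I expect no genuinely hard step — the paper itself flags this as a straightforward induction — only careful bookkeeping. The one subtle point is the multiplicative case, where the two induction hypotheses must be routed through the two ($\multimap$L) inferences with exactly the right split of the multiset antecedent; lining up the implication polarities with the de~Morgan duality built into the translation is essentially the whole computation. It is also worth verifying once and for all that the $\oc$-case derivation uses only ($\oc$F) and never dereliction, so that it remains valid in $\mathbf{IELW}$, and that $A^{\jump{F}}$ never introduces $\bot$, $\otimes$ or $\with$, so that every sequent produced really lives in the $\mathcal{L}^{+}$-fragment on which $\mathbf{ILLW}$ and $\mathbf{IELW}$ are defined.
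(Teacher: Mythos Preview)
Your proposal is correct and follows exactly the approach the paper indicates, namely a straightforward induction on the structure of $A$; the paper does not spell out any of the cases, so your detailed case analysis (with the de~Morgan symmetry observation to halve the work, and the care to use only $(\oc\mathrm{F})$ so that the derivation is valid in both $\mathbf{ILLW}$ and $\mathbf{IELW}$) is precisely the routine verification the paper leaves implicit.
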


\begin{Lem}\label{illtoll3}
	Let $A$ be a classical $\mathcal{L}_C$-formula and $F$ an intuitionistic $\mathcal{L}^+$-formula. $\vdash \underline{A^{\jump{F}}},A$ is provable in $\mathbf{LLW}[\Phi_{\underline{F}}]$ and $\mathbf{ELLW}[\Phi_{\underline{F}}]$.
\end{Lem}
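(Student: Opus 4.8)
The plan is to argue by induction on the structure of the classical $\mathcal{L}_C$-formula $A$, proving the claim for $\mathbf{LLW}[\Phi_{\underline F}]$ and $\mathbf{ELLW}[\Phi_{\underline F}]$ in parallel. The mechanical core of each case is to unfold $\underline{A^{\jump{F}}}$ explicitly, combining the defining equations of $(\_)^{\jump{F}}$ with the de Morgan clauses for $\underline{\_}$ and the involutivity $X = X^{\bot\bot}$. One computes, e.g., $\underline{(p^{\bot})^{\jump{F}}} = p$, $\underline{\bot^{\jump{F}}} = \mathbf{1}$, $\underline{\top^{\jump{F}}} = \mathbf{0}$, $\underline{p^{\jump{F}}} = p^{\bot}\parr\underline F$, $\underline{\mathbf{1}^{\jump{F}}} = \bot\parr\underline F$, $\underline{\mathbf{0}^{\jump{F}}} = \top\parr\underline F$, $\underline{(B\with C)^{\jump{F}}} = \underline{B^{\jump{F}}}\oplus\underline{C^{\jump{F}}}$, $\underline{(\wn B)^{\jump{F}}} = \oc\underline{B^{\jump{F}}}$, while $\underline{(B\otimes C)^{\jump{F}}} = (\underline{B^{\jump{F}}}\otimes(\underline F)^{\bot})\parr\underline{C^{\jump{F}}}$, $\underline{(B\parr C)^{\jump{F}}} = \bigl(\underline{B^{\jump{F}}}\otimes(\underline{C^{\jump{F}}}\otimes(\underline F)^{\bot})\bigr)\parr\underline F$, $\underline{(B\oplus C)^{\jump{F}}} = \bigl((\underline{B^{\jump{F}}}\otimes(\underline F)^{\bot})\with(\underline{C^{\jump{F}}}\otimes(\underline F)^{\bot})\bigr)\parr\underline F$, and $\underline{(\oc B)^{\jump{F}}} = \wn(\underline{B^{\jump{F}}}\otimes(\underline F)^{\bot})\parr\underline F$.

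The base cases are immediate: $\vdash p,p^{\bot}$ by $(\mathsf{Init})$, $\vdash\mathbf{1},\bot$ by $(\mathbf{1})$ and $(\bot)$, $\vdash\mathbf{0},\top$ by $(\top)$; and for $A=p$, $\mathbf{1}$, $\mathbf{0}$ one applies a $(\parr)$-step (and, as needed, $(\bot)$ or $(\top)$), reducing the goal to $\vdash p^{\bot},\underline F,p$, $\vdash\underline F,\mathbf{1}$, or $\vdash\top,\underline F,\mathbf{0}$, which close by $(\mathsf{Init})$/$(\top)$ together with the weakening rule $(\mathsf W)$ --- alternatively $\vdash\underline F,\mathbf{1}$ follows from the axiom $\vdash\underline F\parr\mathbf{1}$ of $\Phi_{\underline F}$, so the argument also goes through without weakening. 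For the inductive cases the recipe is uniform: first peel the outermost $\parr$ off $\underline{A^{\jump{F}}}$ by a $(\parr)$-step (absent in the $\with$- and $\wn$-cases), then apply the introduction rule(s) for the head connective of $A$ --- $(\otimes)$; $(\parr)$; $(\with)$ together with $(\oplus_1)$ and $(\oplus_2)$; $(\oc)$; or, for $A=\wn B$, $(\wn)$ followed by $(\oc)$ --- while distributing the context so that each resulting leaf $\vdash\underline{B^{\jump{F}}},B$ or $\vdash\underline{C^{\jump{F}}},C$ is discharged by the induction hypothesis. The extra $(\underline F)^{\bot}$ that the translation inserts in the $\otimes$- and $\oc$-cases is supplied by tensoring in the axiom $\vdash(\underline F)^{\bot}$ of $\Phi_{\underline F}$ via $(\otimes)$, and the leftover $\underline F$ in the $\parr$-, $\oplus$- and $\oc$-cases is absorbed via $\vdash\underline F,(\underline F)^{\bot}$ ($=(\mathsf{Init})$), or in the $\oc$-case simply carried by $(\mathsf W)$ and re-attached by a concluding $(\parr)$-step. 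For the $\oc$- and $\wn$-cases of $\mathbf{ELLW}$ one uses the functorial promotion $(\mathsf F)$ in place of $(\oc)$ and $(\wn)$: for $A=\wn B$, applying $(\mathsf F)$ to $\vdash\underline{B^{\jump{F}}},B$ gives $\vdash\wn B,\oc\underline{B^{\jump{F}}}$ at once; for $A=\oc B$, one first forms $\vdash\underline{B^{\jump{F}}}\otimes(\underline F)^{\bot},B$ as above and then applies $(\mathsf F)$.

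I do not expect a real obstacle here, only careful bookkeeping. The single delicate point is that the rules $(\oc)$ and $(\with)$ carry side conditions on their contexts --- an all-$\wn$ context for $(\oc)$, and identical contexts in the two premises of $(\with)$ --- so the stray $\underline F$ must not be present when these rules fire; this is exactly why it has to be peeled off by a preliminary $(\parr)$-step and only reinstated afterwards, being parked in the interim inside an $(\mathsf{Init})$ axiom or introduced fresh by weakening. Once the de Morgan unfoldings are written out without error, every case closes by a direct appeal to the induction hypothesis. The companion Lemma~\ref{ax} is proved by the symmetric induction on the intuitionistic side, using the rules for $\multimap$ and the additive and exponential rules, and requires no additional ideas.
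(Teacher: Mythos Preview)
Your proposal is correct and follows exactly the approach the paper indicates, namely a straightforward induction on the structure of the classical formula $A$; your explicit unfoldings of $\underline{A^{\jump{F}}}$ and your handling of the side conditions on $(\oc)$, $(\with)$ and $(\mathsf F)$ are all accurate. The paper gives no further detail beyond ``by straightforward induction on formulas'', so your write-up is simply a fleshed-out version of the same argument (and your aside that the axiom $\underline F\parr\mathbf 1$ can replace the use of weakening is precisely the point the paper later makes when extending the lemma to $\mathbf{LL}$).
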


Lemma~\ref{illtoll3} is an affine version of~\cite[Lemma 2.9]{Lau18}.
Using Lemma~\ref{ax}, we show:

\begin{Lem}\label{illtoll1}
	Let $\vdash \Gamma$ be a classical $\mathcal{L}_C$-sequent and $F$ an intuitionistic  $\mathcal{L}^+$-formula. If $\vdash \Gamma$ is provable in $\mathbf{LLW}$ (resp.\ $\mathbf{ELLW}$) then $\Gamma^{\jump{F}} \vdash F$ is provable in $\mathbf{ILLW}$ (resp.\ $\mathbf{IELW}$).
\end{Lem}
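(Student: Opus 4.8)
The plan is to argue by induction on a \emph{cut-free} derivation. By the cut-elimination theorem (Theorem~\ref{cut}) for $\mathbf{LLW}$ (resp.\ $\mathbf{ELLW}$), if $\vdash\Gamma$ is provable then it admits a cut-free proof $\pi$, and it suffices to show by induction on the structure of $\pi$, with a case distinction on its last rule, that $\Gamma^{\jump{F}}\vdash F$ is provable in $\mathbf{ILLW}$ (resp.\ $\mathbf{IELW}$). The base cases are settled at once: (Init), i.e.\ $\vdash A,A^{\bot}$, is precisely Lemma~\ref{ax}; ($\mathbf{1}$) reduces to deriving $\mathbf{1}\multimap F\vdash F$ from ($\mathbf{1}$R), (Init) and ($\multimap$L), since $\mathbf{1}^{\jump{F}}=\mathbf{1}\multimap F$; and ($\top$) is an instance of ($\mathbf{0}$L), since $\top^{\jump{F}}=\mathbf{0}$ and $\mathbf{0}\in\mathcal{L}^+$.

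For the propositional inductive cases one translates the conclusion, applies the induction hypothesis to the premises, and fills the remaining gap by a fixed short derivation whose shape is dictated by the corresponding clause of the translation. For example, in the ($\parr$) case, from $\Gamma^{\jump{F}},A^{\jump{F}},B^{\jump{F}}\vdash F$ two applications of ($\multimap$R) give $\Gamma^{\jump{F}}\vdash A^{\jump{F}}\multimap(B^{\jump{F}}\multimap F)$, and one ($\multimap$L) against $F\vdash F$ yields $\Gamma^{\jump{F}},(A\parr B)^{\jump{F}}\vdash F$; the case ($\otimes$) is analogous, applying ($\multimap$R) to one premise and then ($\multimap$L); ($\with$), ($\oplus$1) and ($\oplus$2) use ($\oplus$L), ($\oplus$R1) and ($\oplus$R2) together with ($\multimap$R)/($\multimap$L) as needed; ($\bot$) uses ($\mathbf{1}$L); the right-weakening rule (W) is matched by the left-weakening rule (W); and ($\wn$C) is matched by ($\oc$C), since $(\wn A)^{\jump{F}}=\oc A^{\jump{F}}$ and ($\oc$C) is available in both $\mathbf{ILLW}$ and $\mathbf{IELW}$.

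The exponential rules form the crux, and here the particular form of Laurent's parametric translation does the work. Consider the $\mathbf{LLW}$-rule ($\oc$) with premise $\vdash\wn\Gamma,A$ and conclusion $\vdash\wn\Gamma,\oc A$: then $(\wn\Gamma)^{\jump{F}}$ is a multiset of $\oc$-formulas, namely $\oc(\Gamma^{\jump{F}})$, and $(\oc A)^{\jump{F}}=(\oc(A^{\jump{F}}\multimap F))\multimap F$. From the induction hypothesis $\oc(\Gamma^{\jump{F}}),A^{\jump{F}}\vdash F$, one application of ($\multimap$R) gives $\oc(\Gamma^{\jump{F}})\vdash A^{\jump{F}}\multimap F$, then ($\oc$P) gives $\oc(\Gamma^{\jump{F}})\vdash\oc(A^{\jump{F}}\multimap F)$---its side condition being met precisely because the left-hand context is $\oc$-prefixed---and a final ($\multimap$L) against $F\vdash F$ produces $\oc(\Gamma^{\jump{F}}),(\oc A)^{\jump{F}}\vdash F$. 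The rule ($\wn$), from $\vdash\Gamma,A$ to $\vdash\Gamma,\wn A$, is handled by a single ($\oc$D). For $\mathbf{ELLW}$ the rules ($\oc$) and ($\wn$) are absent and the only new rule is (F), with premise $\vdash\Gamma,A$ and conclusion $\vdash\wn\Gamma,\oc A$; the computation above goes through verbatim with ($\oc$F) (functorial promotion) in place of ($\oc$P)---which is exactly the promotion rule available in $\mathbf{IEZW}$, hence in $\mathbf{IELW}$---so that ($\oc$D) is not needed. I expect the main obstacle to be precisely this matching between the promotion rules and the translation, namely checking that a $\wn$-context always translates to a genuine $\oc$-context so that the side condition of ($\oc$P) (resp.\ the applicability of ($\oc$F)) is respected; everything else is routine bookkeeping, with Lemma~\ref{ax} as the only non-trivial input.
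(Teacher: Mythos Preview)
Your proposal is correct and follows essentially the same approach as the paper: induction on a cut-free proof of $\vdash\Gamma$, with the (Init) case handled by Lemma~\ref{ax} and the remaining cases declared routine. In fact you supply considerably more detail than the paper does (the paper merely says ``all the remaining cases are more or less straightforward''), and your treatment of the exponential rules---checking that $(\wn\Gamma)^{\jump{F}}$ is a genuine $\oc$-context so that ($\oc$P) resp.\ ($\oc$F) applies---is exactly the point that needs verifying.
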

\begin{proof}
	By induction on the length of the cut-free proof of $\vdash \Gamma$. 
	The case where the rule of (Init) is the last step is immediate from Lemma~\ref{ax}.
	All the remaining cases are more or less straightforward. 
\end{proof}

\translation*
\begin{proof}
	We only prove the first claim, the other one being similar.
	The proof is analogous to that of \cite[Proposition 2.16]{Lau18}.
	It suffices to prove the \emph{if} direction since the converse direction is immediate from Lemma~\ref{illtoll1}.
	Suppose that $\Gamma^{[x]} \vdash x$ is provable in $\mathbf{ILLW}$. 
	By Lemma~\ref{illtoll2}, $\vdash (\underline{\Gamma^{[x]}})^{\bot},\underline{x}$ is provable in $\mathbf{LLW}$, and is clearly provable in $\mathbf{LLW}[\Phi_{\underline{x}}]$. 
	By Lemma~\ref{illtoll3} $\vdash \underline{B^{[x]}},B$ is provable in $\mathbf{LLW}[\Phi_{\underline{x}}]$ for any formula $B$ in $\Gamma$. 
    Recall that $\vdash \underline{x}^{\bot}$ is provable in   $\mathbf{LLW}[\Phi_{\underline{x}}]$. 
	Hence $\vdash \Gamma$ is provable in $\mathbf{LLW}[\Phi_{\underline{x}}]$ by several applications of (Cut). 
	Now we define a substitution $\tau$ by $\tau(x)=\bot$ and $\tau(p)=p$ for each propositional variable $p$ other than $x$. 
	By Lemma~\ref{illtoll4}, $\vdash \tau(\Gamma)$ is provable in $\mathbf{LLW}[\tau(\Phi_{\underline{x}})]$. 
	Since $\Gamma$ has no occurrences of $x$, it holds that $\tau(\Gamma)=\Gamma$; thus  $\vdash \Gamma$ is provable in $\mathbf{LLW}[\tau(\Phi_{\underline{x}})]$. 
	Obviously every sequent that is provable in $\mathbf{LLW}[\tau(\Phi_{\underline{x}})]$ is provable in $\mathbf{LLW}$; hence $\vdash \Gamma$ is provable in $\mathbf{LLW}$. 
\end{proof}

In the argument above, one might wonder why $\Phi_A$ contains the formula $A \parr \mathbf{1}$. 
Indeed, the proof of Theorem~\ref{illtoll5} works well even if one defines $\Phi_A=\{A^{\bot}\}$. 
However, the construction here allows us to prove that the translation also holds between $\mathbf{LL}$ (i.e., $\mathbf{LLW}$ without the unrestricted weakening) and $\mathbf{ILL}$ (i.e., $\mathbf{ILLW}$ without the unrestricted left-weakening). 
By repeating the argument as before, we have the following theorem:

\illtoll*
\begin{proof}
To show this, we need the linear versions of Lemmas~\ref{illtoll2}, \ref{illtoll4}, \ref{ax}, \ref{illtoll3} and \ref{illtoll1}; the proof is almost the same as before. 
However, note that the use of $A \parr \mathbf{1}$ is essential in the proof of the linear version of Lemma~\ref{illtoll3}.
\end{proof}

\section{Proof of Theorems~\ref{iezwtoabvass} and~\ref{encodingofILZW2}}\label{appmembertower1}
We firstly introduce the notion of regular deduction tree in \abvass s. 
Although this notion is implicitly introduced in \cite[Section~3.2.2]{LS15}, we fully describe it here.

Let $\mathcal{A}=\langle Q,d,T_u,T_s,T_f,T_z \rangle$ be an \abvass\ and $Q_{\ell}$ a subset of $Q$.
We inductively define \emph{$Q_{\ell} \times \{\vec 0\}$-leaf-covering regular lossy deduction trees} in $\mathcal{A}$ as follows:

\begin{itemize}
	\item For every $q \in Q_{\ell}$, $q,\vec 0$ is a $Q_{\ell} \times \{\vec 0\}$-leaf-covering regular lossy deduction tree with root $q,\vec 0$.
	\item For every $q \in Q_{\ell}$ and every $i \in \{1,\ldots,d\}$, the figure of the form:
	\begin{prooftree}\rootAtTop
		\AxiomC{}
		\noLine
		\UnaryInfC{$q,\vec 0$}
		\RightLabel{$(\mathsf{loss})$}
		\UnaryInfC{$q,\vec e_i$}
	\end{prooftree}
	is a $Q_{\ell} \times \{\vec 0\}$-leaf-covering regular lossy deduction tree with root $q,\vec e_i$.
	\item If $\mathcal{T}_1$ is a $Q_{\ell} \times \{\vec 0\}$-leaf-covering regular lossy deduction tree with root $q_1,\vec v_1$, $\mathcal{T}_2$ is a $Q_{\ell} \times \{\vec 0\}$-leaf-covering regular lossy deduction tree with root $q_2,\vec v_2$, and $q \rightarrow q_1 + q_2 \in T_s$, then the figure of the form:
	\begin{prooftree}\rootAtTop
		\AxiomC{$\mathcal{T}_1$}
		\noLine
		\UnaryInfC{$q_1,\vec v_1$}
		\AxiomC{$\mathcal{T}_2$}
		\noLine
		\UnaryInfC{$q_2,\vec v_2$}
		\RightLabel{$(q \rightarrow q_1 + q_2)$}
		\BinaryInfC{$q,\vec v_1+\vec v_2$}
	\end{prooftree}
	is a $Q_{\ell} \times \{\vec 0\}$-leaf-covering regular lossy deduction tree with root $q,\vec v_1+\vec v_2$.
	\item If $\mathcal{T}_1$ is a $Q_{\ell} \times \{\vec 0\}$-leaf-covering regular lossy deduction tree with root $q_1,\vec v$, $\mathcal{T}_2$ is a $Q_{\ell} \times \{\vec 0\}$-leaf-covering regular lossy deduction tree with root $q_2,\vec v$, and $q \rightarrow q_1 \land q_2 \in T_f$, then the figure of the form:
	\begin{prooftree}\rootAtTop
		\AxiomC{$\mathcal{T}_1$}
		\noLine
		\UnaryInfC{$q_1,\vec v$}
		\AxiomC{$\mathcal{T}_2$}
		\noLine
		\UnaryInfC{$q_2,\vec v$}
		\RightLabel{$(q \rightarrow q_1 \wedge q_2 )$}
		\BinaryInfC{$q,\vec v$}
	\end{prooftree}
	is a $Q_{\ell} \times \{\vec 0\}$-leaf-covering regular lossy deduction tree with root $q,\vec v$.
	\item If $\mathcal{T}_1$ is a $Q_{\ell} \times \{\vec 0\}$-leaf-covering regular lossy deduction tree with root $q_1,\vec  v+\vec u$ and $q \xrightarrow{\vec u} q_1\in T_u$, then the figure of the form:
	\begin{prooftree}\rootAtTop
		\AxiomC{$\mathcal{T}_1$}
		\noLine
		\UnaryInfC{$q_1,\vec v + \vec u$}
		\RightLabel{$(q \xrightarrow{\vec u} q_1)$}
		\UnaryInfC{$q,\vec v$}
	\end{prooftree}
	is a $Q_{\ell} \times \{\vec 0\}$-leaf-covering regular lossy deduction tree with root $q,\vec v$. 
	\item If $\mathcal{T}_1$ is a $Q_{\ell} \times \{\vec 0\}$-leaf-covering regular lossy deduction tree with root $q_1,\bar{\mathsf{0}}$ and $q \fz q_1\in T_z$, then the figure of the form:
	\begin{prooftree}\rootAtTop
		\AxiomC{$\mathcal{T}_1$}
		\noLine
		\UnaryInfC{$q_1,\bar{\mathsf{0}}$}
		\RightLabel{$(q \fz q_1)$}
		\UnaryInfC{$q,\bar{\mathsf{0}}$}
	\end{prooftree}
	is a  $Q_{\ell} \times \{\vec 0\}$-leaf-covering regular lossy deduction tree with root $q,\bar{\mathsf{0}}$. 
	\item If $\mathcal{T}_1$ is a $Q_{\ell} \times \{\vec 0\}$-leaf-covering regular lossy deduction tree with root $q,\vec v$ whose last applied rule is either a loss rule or a full zero test rule, then for every $i \in \{1,\ldots,d\}$, the figure of the form:
	\begin{prooftree}\rootAtTop
		\AxiomC{$\mathcal{T}_1$}
		\noLine
		\UnaryInfC{$q,\vec v$}
		\RightLabel{$(\mathsf{loss})$}
		\UnaryInfC{$q,\vec v +\vec e_i$}
	\end{prooftree}
	is a $Q_{\ell} \times \{\vec 0\}$-leaf-covering regular lossy deduction tree with root $q,\vec v+\vec e_i$.
\end{itemize}

\begin{Lem}[cf.\ Lazi\'c and Schmitz~\cite{LS15}, Section~3.2.2]\label{regular2}
	Let $\mathcal{A}=\langle Q,d,T_u,T_s,T_f,T_z \rangle$ be an \abvass\ and $\mathcal{T}$ a $Q_{\ell} \times \{\vec 0\}$-leaf-covering lossy deduction tree in $\mathcal{A}$ of the form:
	\begin{prooftree}\rootAtTop
	   \AxiomC{\raisebox{0ex}[2.0ex][0ex]{$\vdots$}}
	\noLine
	\UnaryInfC{$q,\vec v$}
	\RightLabel{$(\mathsf{loss})$}
	\UnaryInfC{$q,\vec v+\vec e_i$}
	\end{prooftree}
	where the immediate subtree with root $q,\vec v$ is regular. 
	Then there is a $Q_{\ell} \times \{\vec 0\}$-leaf-covering regular lossy deduction tree with root $q,\vec v+\vec e_i$ in $\mathcal{A}$.
\end{Lem}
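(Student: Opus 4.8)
The plan is to prove the lemma by induction on the size of the regular immediate subtree $\mathcal{T}'$ of $\mathcal{T}$ (the one rooted at $q,\vec v$), performing a case analysis on the deduction rule applied at the root of $\mathcal{T}'$. The guiding observation is that the definition of a regular lossy deduction tree only permits a loss rule to stand immediately above a leaf, above another loss rule, or above a full zero test rule; hence, when the extra loss rule leading from $q,\vec v$ to $q,\vec v+\vec e_i$ cannot be absorbed at once, it must be ``pushed down'' past the root rule of $\mathcal{T}'$ until it reaches one of those three situations.

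First I would dispose of the immediate cases. If $\mathcal{T}'$ is a single leaf $q,\vec 0$ with $q\in Q_\ell$, then $\vec v+\vec e_i=\vec e_i$ and the second clause of the definition already exhibits a regular tree with root $q,\vec e_i$. If the last applied rule of $\mathcal{T}'$ is itself a loss rule or a full zero test rule, then the final clause of the definition directly produces a regular tree with root $q,\vec v+\vec e_i$ (in the full-zero-test subcase one has $\vec v=\vec 0$, so this root is $q,\vec e_i$).

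The remaining cases are handled via the induction hypothesis by propagating the loss into the subtrees. If the last rule is a unary rule $q\xrightarrow{\vec u}q_1\in T_u$ with immediate subtree $\mathcal{T}_1$ rooted at $q_1,\vec v+\vec u$, then $(\vec v+\vec u)+\vec e_i\in\mathbb{N}^d$, so the induction hypothesis applied to $\mathcal{T}_1$ with a loss rule appended yields a regular tree rooted at $q_1,(\vec v+\vec u)+\vec e_i=(\vec v+\vec e_i)+\vec u$; applying $q\xrightarrow{\vec u}q_1$ on top gives a regular tree rooted at $q,\vec v+\vec e_i$. If the last rule is a split rule $q\rightarrow q_1+q_2\in T_s$ with subtrees $\mathcal{T}_1,\mathcal{T}_2$ rooted at $q_1,\vec v_1$ and $q_2,\vec v_2$ with $\vec v=\vec v_1+\vec v_2$, the induction hypothesis applied to $\mathcal{T}_1$ (with a loss appended) gives a regular tree rooted at $q_1,\vec v_1+\vec e_i$, and one application of the split rule together with $\mathcal{T}_2$ produces a regular tree rooted at $q,(\vec v_1+\vec e_i)+\vec v_2=\vec v+\vec e_i$. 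If the last rule is a fork rule $q\rightarrow q_1\wedge q_2\in T_f$, so both subtrees are rooted at configurations with the same vector $\vec v$, we apply the induction hypothesis to \emph{both} subtrees, each with a loss appended, obtaining regular trees rooted at $q_1,\vec v+\vec e_i$ and $q_2,\vec v+\vec e_i$, and then apply the fork rule.

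I do not expect a genuine obstacle: this is a routine structural induction, and it is essentially the argument left implicit in \cite[Section~3.2.2]{LS15}. The only points requiring care are checking that all vectors remain in $\mathbb{N}^d$ when the loss is moved past a unary rule, and noting that in the split case the loss need only be propagated into one child, whereas in the fork case it must be propagated into both so that the two resulting subtrees still share a common root vector.
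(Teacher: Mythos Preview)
Your proposal is correct and follows essentially the same route as the paper: induction on the height (you use size, which works equally well) of the tree, with a case analysis on the last rule of the regular subtree, pushing the loss into one child for unary and split rules and into both children for fork rules. The paper only spells out the split case and leaves the rest to the reader, whereas you cover all cases explicitly; there is no substantive difference in approach.
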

\begin{proof}
	By induction on the height of $\mathcal{T}$. 
	We only analyze the case where the immediate subtree ends with an application of a split rule:
	\begin{prooftree}\rootAtTop
		\AxiomC{$\mathcal{T}_1$}
		\noLine
		\UnaryInfC{$q_1,\vec v_1$}
		\AxiomC{$\mathcal{T}_2$}
		\noLine
		\UnaryInfC{$q_2,\vec v_2$}
		\RightLabel{$(q \rightarrow q_1 + q_2)$}
		\BinaryInfC{$q,\vec v_1 + \vec v_2$}
		\RightLabel{$(\mathsf{loss})$}
		\UnaryInfC{$q,\vec v_1+\vec v_2+\vec e_i$}
	\end{prooftree}
	This can be transformed into:
	\begin{prooftree}\rootAtTop
		\AxiomC{$\mathcal{T}_1$}
		\noLine
		\UnaryInfC{$q_1,\vec v_1$}
		\AxiomC{$\mathcal{T}_2$}
		\noLine
		\UnaryInfC{$q_2,\vec v_2$}
		\RightLabel{$(\mathsf{loss})$}
		\UnaryInfC{$q_2,\vec v_2+\vec e_i$}
		\RightLabel{$(q \rightarrow q_1 + q_2)$}
		\BinaryInfC{$q,\vec v_1+\vec v_2+\vec e_i$}
	\end{prooftree}
	The height of the subtree with root $q_2,\vec v_2+ \vec e_i$ is less than that of $\mathcal{T}$. 
	Note that $\mathcal{T}_1$ and $\mathcal{T}_2$ are both regular. 
	By the induction hypothesis there is a $Q_{\ell} \times \{\vec 0\}$-leaf-covering regular lossy deduction tree with root $q_2,\vec v_2+\vec e_i$. 
	Thus we obtain the desired deduction tree. 
	The remaining cases are similar.
\end{proof}

Using the above lemma, we show:
\begin{Lem}[cf.\ Lazi\'c and Schmitz~\cite{LS15}, Section~3.2.2]\label{regular}
	Let $\mathcal{A}=\langle Q,d,T_u,T_s,T_f,T_z \rangle$ be an \abvass, $Q_{\ell}$ a subset of $Q$, and $(q,\vec v)$ in $Q \times \mathbb{N}^d$. If $\mathcal{A},Q_{\ell} \judge_{\ell} q,\vec v$, then there is a $Q_{\ell} \times \{\vec 0\}$-leaf-covering regular lossy deduction tree with root $q,\vec v$ in $\mathcal{A}$.
\end{Lem}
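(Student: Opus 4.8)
The plan is to prove the statement by induction on the height of a $Q_\ell \times \{\vec 0\}$-leaf-covering lossy deduction tree $\mathcal{T}$ witnessing $\mathcal{A},Q_\ell \judge_\ell q,\vec v$, performing a case analysis according to the rule applied at the root of $\mathcal{T}$. I would first record the routine fact --- implicit in the clauses defining regular trees, each of which mirrors a deduction rule of $\mathcal{A}$ or a loss rule --- that every $Q_\ell \times \{\vec 0\}$-leaf-covering regular lossy deduction tree is in particular a $Q_\ell \times \{\vec 0\}$-leaf-covering lossy deduction tree; this is what guarantees that the inductive construction produces objects of the right kind, and it is also tacitly needed for the statement of Lemma~\ref{regular2} to make sense.

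For the base case $\mathcal{T}$ is a single leaf, so $q \in Q_\ell$ and $\vec v = \vec 0$, and $q,\vec 0$ is already regular by the first clause of the definition. If the root of $\mathcal{T}$ is inferred by a unary rule $q \xrightarrow{\vec u} q_1$, a split rule $q \rightarrow q_1 + q_2$, a fork rule $q \rightarrow q_1 \wedge q_2$, or a full zero test rule $q \fz q_1$, then I apply the induction hypothesis to each immediate subtree of $\mathcal{T}$ to obtain a regular lossy deduction tree with the same root, and reassemble these with the same rule. This is legitimate precisely because the clauses of the definition of regular trees that correspond to unary, split, fork and zero test rules impose no restriction on the shape of the subtrees they combine; only the clauses producing loss steps are constrained, which is what makes the loss case the substantive one.

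So suppose the root of $\mathcal{T}$, say $q,\vec w$, is inferred by a loss rule from the root $q,\vec w'$ of its immediate subtree $\mathcal{T}_1$, where $\vec w = \vec w' + \vec e_i$ for some $i$. The induction hypothesis applied to $\mathcal{T}_1$ yields a $Q_\ell \times \{\vec 0\}$-leaf-covering \emph{regular} lossy deduction tree $\mathcal{T}_1'$ with root $q,\vec w'$. Placing a single loss step below $\mathcal{T}_1'$ gives a lossy deduction tree with root $q,\vec w' + \vec e_i = q,\vec w$ whose immediate subtree --- namely $\mathcal{T}_1'$ --- is regular; this is exactly the configuration addressed by Lemma~\ref{regular2}, which therefore supplies a regular lossy deduction tree with root $q,\vec w$, completing the induction. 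The only real obstacle is thus this loss case, and it has deliberately been isolated into Lemma~\ref{regular2}: the work there is to permute a loss step downward past branching and unary rules until it is absorbed into a leaf or into a preceding loss or zero test step, an operation that strictly decreases tree height and hence terminates. Once Lemma~\ref{regular2} is available, the present lemma is a straightforward structural induction.
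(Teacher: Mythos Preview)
Your proof is correct and follows essentially the same approach as the paper: induction on the height of the lossy deduction tree, with the non-loss cases handled by applying the induction hypothesis to the immediate subtrees and reassembling, and the loss case handled by invoking Lemma~\ref{regular2} after applying the induction hypothesis. Your additional remarks about why the non-loss clauses of the regular-tree definition impose no constraint and why the loss case is the only substantive one are accurate and helpful, but the argument itself is the same as the paper's.
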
  
\begin{proof}
	By induction on the height of the $Q_{\ell} \times \{\vec 0\}$-leaf-covering lossy deduction tree with root $q,\vec v$. 
	We carry out a case analysis on the type of last applied deduction rule.
	Clearly, the cases where rules other than $(\mathsf{loss})$ are applied last are straightforward. 
	If the root label $q,\vec v+\vec e_i$ is obtained from $q,\vec v$ by an application of $(\mathsf{loss})$, then by the induction hypothesis, there is a $Q_{\ell} \times \{\vec 0\}$-leaf-covering regular lossy deduction tree $\mathcal{T}$ with root $q,\vec v$ in $\mathcal{A}$.
	Then we may construct: 
	\begin{prooftree}\rootAtTop
		\AxiomC{$\mathcal{T}$}
		\noLine
		\UnaryInfC{$q,\vec v$}
		\RightLabel{$(\mathsf{loss})$}
		\UnaryInfC{$q,\vec v+\vec e_i$}
	\end{prooftree}
	Applying Lemma~\ref{regular2}, we obtain the desired deduction tree. 
\end{proof}

Here we recall from Section~\ref{towerupperellw} the \abvass\ $\mathcal{A}^E_F$ constructed from a formula $F$.
For each multiset $m$ over $S$, we write $\zeta(m)$ for the submultiset of formulas in $m$ of the form $\oc B$, and $\xi(m)$ for the submultiset of remaining formulas in $m$; obviously, $m=\zeta(m),\xi(m)$ and $\vec v_m=\vec v_{\zeta(m)}+\vec v_{\xi(m)}$.

We introduce the notion of standard deduction tree which plays a key role in proofs of Theorems~\ref{iezwtoabvass} and~\ref{encodingofILZW2}.
A lossy deduction tree $\mathcal{T}$ in $\mathcal{A}^E_F$ is said to be \emph{standard} if $\mathcal{T}$ satisfies the following two conditions:
\begin{enumerate}
    \item For any subtree $\mathcal{T}'$ of $\mathcal{T}$, the last applied rule in $\mathcal{T}'$ is ($\mathsf{store}$) whenever the root configuration of $\mathcal{T}'$ is of the form $q,X,\vec v_{\Gamma,\oc B}$ for some $q \in \mathcal{P}(S_{\oc})$, $\Gamma \in \mathbb{N}^{S}$, $X \in S \cup \{\bullet\}$ and $\oc B \in S_{\oc}$.
    \item For any subtree $\mathcal{T}'$ of $\mathcal{T}$, the root configuration of $\mathcal{T}'$ is in $\mathcal{P}(S_{\oc}) \times (S \cup \{\bullet\}) \times \mathbb{N}^{d}$ whenever the last applied rule in $\mathcal{T}'$ is ($\mathsf{loss}$).
\end{enumerate}
That is, a standard lossy deduction tree is a lossy deduction tree where (1) store rules are applied as close to the root configuration as possible, and (2) no applications of (\textsf{loss}) occur at the intermediate states and the leaf state.

\begin{Lem}\label{standard}
	For any $q$ in $\mathcal{P}(S_{\oc})$, $\Gamma$ in $\mathbb{N}^S$, and $X$ in $S \cup \{\bullet\}$, if $\mathcal{A}^E_F,\{q_{\ell}\} \judge_{\ell} q,X,\vec v_{\Gamma}$, then there is a $\{(q_{\ell},\vec 0)\}$-leaf-covering standard lossy deduction tree with root $q,X,\vec v_{\Gamma}$.
\end{Lem}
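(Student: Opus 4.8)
The plan is to start from a \emph{regular} lossy deduction tree, whose existence is guaranteed by Lemma~\ref{regular}, and to massage it into a standard one by local surgery, proceeding by induction on its size. I would first record two structural facts about $\mathcal{A}^E_F$. (i) A token $\vec e_{\oc B}$ in the vector component can be removed only by ($\mathsf{store}$) or by a loss: no rule among ($\mathsf{Init1}$), ($\mathbf 1\mathsf{L}$), ($\bot\mathsf{L}$), ($\otimes\mathsf{L}$), ($\multimap\mathsf{L}$), ($\with\mathsf{L}$), ($\oplus\mathsf{L}$), ($\mathbf 0\mathsf{L}$), ($\top\mathsf{R}$), nor the ($\mathsf{func}$)-loop, consumes a $\oc$-prefixed formula, since a formula in $S_{\oc}$ is never of the form $C\otimes D$, $C\multimap D$, $C\with D$, $C\oplus D$, $\mathbf 1$, $\bot$, $\mathbf 0$, and the relevant loops range only over $S\setminus S_{\oc}$. (ii) Every intermediate state of $\mathcal{A}^E_F$ is entered only by unary, split or fork micro-rules (never by a full zero test, and it is never a leaf); since in a regular tree a loss may sit directly above only a leaf, another loss, or a full zero test, a regular tree contains no loss at an intermediate state. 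Hence condition~(2) of standardness can fail in a regular tree only through losses at $q_{\ell}$.

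Next I would isolate an ``eager-store'' lemma: if $\mathcal{A}^E_F,\{q_{\ell}\}\judge_{\ell} q,X,\vec v_{\Gamma,\oc B}$ then $\mathcal{A}^E_F,\{q_{\ell}\}\judge_{\ell} q\cup\{\oc B\},X,\vec v_{\Gamma}$, witnessed by a regular tree of size no larger than a given regular tree for the former. By fact~(i), the distinguished token $\vec e_{\oc B}$ at the root is either lost, or never touched, or consumed by a ($\mathsf{store}$) of $\oc B$ at some node $\nu$; in the first two cases $q,X,\vec v_{\Gamma}$ is already coverable and a single ($\oc\mathsf{W}$) does the job, while in the third case one traces the token from the root down to $\nu$, replacing ``token $\oc B$'' by ``$\oc B$ in the state'' along that path and turning the ($\mathsf{store}$) at $\nu$ into a loss of $\oc B$; at an ($\otimes\mathsf{R}$) or ($\multimap\mathsf{L}$) split the token descends into exactly one child and the set component splits accordingly, and at a ($\with\mathsf{R}$) or ($\oplus\mathsf{L}$) fork the token is present in both children, so one recurses into both. (The converse, that $q,X,\vec v_{\Gamma,\oc B}$ is coverable from $q\cup\{\oc B\},X,\vec v_{\Gamma}$, is immediate by one ($\mathsf{store}$).) Combined with the trivial monotonicity ``$\mathcal{A}^E_F,\{q_{\ell}\}\judge_{\ell} q,X,\vec v$ implies $\mathcal{A}^E_F,\{q_{\ell}\}\judge_{\ell} q\cup\{\oc B\},X,\vec v$'' (apply one ($\oc\mathsf{W}$)), the eager-store lemma lets me enforce condition~(1): whenever a real-state configuration $q,X,\vec v_{\Gamma,\oc B}$ occurs, store $\oc B$ first and recurse on the strictly smaller instance.

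For condition~(2) I would then induct on the decomposition of the regular tree into the ``macro-moves'' of $\mathcal{A}^E_F$ (each of ($\multimap\mathsf{L}$), ($\otimes\mathsf{L}$), ($\with\mathsf{L}$), ($\oplus\mathsf{L}$), ($\mathbf 0\mathsf{L}$), ($\top\mathsf{R}$), ($\mathsf{func}$) counted as one move), losses, and leaves; this is legitimate by fact~(ii). If the last macro-move is one of the six ending at $q_{\ell}$, namely ($\mathsf{Init1}$), ($\mathsf{Init2}$), ($\mathbf 1\mathsf{R}$), ($\bot\mathsf{L}$), ($\mathbf 0\mathsf{L}$), ($\top\mathsf{R}$), then in the regular tree it is followed by a chain of losses down to the leaf $q_{\ell},\vec 0$; I relocate these losses so that they act on the real-state configuration that is the conclusion of the macro-move (for ($\mathbf 0\mathsf{L}$) and ($\top\mathsf{R}$) this lets me also discard the internal loops), and if the relocated losses uncover $\oc$-tokens there, I first invoke the eager-store step, so that ($\mathsf{Init2}$), which requires a singleton set component, can still fire after the auxiliary ($\oc\mathsf{W}$)'s. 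In every other case the last macro-move leads only to real-state configurations (or, for ($\mathsf{func}$), to a configuration $\emptyset,A,\dots$), so I apply the induction hypothesis to the immediate subtree(s), reattach the move, and invoke the eager-store step at the root if a $\oc$-token is present there. The induction is organised on the size of the regular tree, refined lexicographically by the number of $\oc$-tokens in the root vector to absorb the eager-store step.

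I expect the main obstacle to be the eager-store lemma itself: carrying out the token-tracing cleanly through the split and fork nodes of the deduction tree, keeping the set-valued state component consistent at $\otimes\mathsf{R}$-splits (where the parent's set is $q_1\cup q_2$) and ensuring that the size measure does not grow, since everything else is a bounded case analysis over the finitely many rule schemes of $\mathcal{A}^E_F$. A secondary subtlety is the ($\mathsf{Init2}$) case, where relocated losses may deposit $\oc$-formulas in the state that must then be weakened away by ($\oc\mathsf{W}$) before the initial-sequent rule applies.
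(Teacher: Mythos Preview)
Your approach is correct and shares the paper's overall strategy—induction starting from a regular lossy deduction tree (via Lemma~\ref{regular}) and case analysis on the last macro-move—but is factored differently. The paper performs a single induction: in each case the hypothesis yields standard subtrees (which already begin with $(\mathsf{store})$ applications), and the case reconstructs a standard tree by re-indexing the macro-move at the larger state $q\cup\sigma(\zeta(\Gamma))$ so that those stores appear below it at the root; for the terminal cases ($\mathsf{Init1}$, etc.) it simply writes down the standard tree directly. You instead isolate the ``stores first'' step as a standalone eager-store lemma and invoke it at the root before inspecting the last macro-move, using a lexicographic measure (tree size, number of $\oc$-tokens at the root) to absorb it. This modular decomposition is arguably cleaner and avoids repeating the store-commuting argument in every case; the paper's inline version, on the other hand, sidesteps having to argue that the eager-store surgery preserves regularity.

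Two minor imprecisions worth fixing. In fact~(ii), what matters is not which rules \emph{enter} an intermediate state (the $(\mathsf{func})$ intermediate state \emph{is} entered by a full zero test) but that no full zero test \emph{leaves} an intermediate state and that no intermediate state is a leaf; this is what drives the infinite-descent argument that regular trees carry no losses there. And in the eager-store surgery, ``turning the $(\mathsf{store})$ at $\nu$ into a loss of $\oc B$'' is not right: once $\oc B$ has been moved into the state along the path, there is no $\oc B$ token in the vector at $\nu$ to lose; the correct move is to elide the $(\mathsf{store})$ (or, in the ``lost en route'' case, replace the loss by an $(\oc\mathsf{W})$). Neither point affects the soundness of the plan.
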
 
\begin{proof}
	Thanks to Lemma~\ref{regular}, we can prove this lemma by induction on the height of the  $\{(q_{\ell},\vec 0)\}$-leaf-covering regular lossy deduction tree with root $q,X,\vec v_{\Gamma}$ in $\mathcal{A}^E_F$. 
	We perform a case-by-case analysis, depending on which rule is applied last. 
	
	In the case that ($\mathsf{Init1}$) is the last applied rule, the regular lossy deduction tree is of the form:
		\begin{prooftree}\rootAtTop
		\AxiomC{\raisebox{0ex}[2.0ex][0ex]{$\vdots$}}
		\noLine
		\UnaryInfC{$q_{\ell},\vec v_{\Gamma}$}
		\RightLabel{($\mathsf{Init1}$)}
		\UnaryInfC{$\emptyset,D,\vec v_{\Gamma,D}$}
	\end{prooftree}
    In this case, we can construct the following deduction tree without appealing to the induction hypothesis, as desired:
	\begin{prooftree}
		\AxiomC{$\emptyset,D,\vec v_{\Gamma,D}$}
		\doubleLine
		\RightLabel{($\mathsf{store}$)}
		\UnaryInfC{$\sigma(\zeta(\Gamma)),D,\vec v_{\xi(\Gamma),D}$}
		\doubleLine
		\RightLabel{($\mathsf{\oc W}$)}
		\UnaryInfC{$\emptyset,D,\vec v_{\xi(\Gamma),D}$}
		\doubleLine
		\RightLabel{($\mathsf{loss}$)}
		\UnaryInfC{$\emptyset,D,\vec e_{D}$}
	    \RightLabel{($\mathsf{Init1}$)}
		\UnaryInfC{$q_{\ell},\vec 0$}
	\end{prooftree}

In several cases, we need to carry out a careful analysis.
For instance, we consider here the case where a rule $q,X  \xrightarrow{-\vec e_{A \oplus B}} \textsc{Plus}(q,X,A \oplus B)$ is applied last, where $\textsc{Plus}(q,X,A \oplus B)$ denotes the intermediate state in $(\oplus\mathsf{L})$, which corresponds to the state $q,X$ and the formula $A \oplus B$.
The last part of the regular lossy deduction tree is thus of the form:
	\begin{prooftree}\rootAtTop
	\AxiomC{\raisebox{0ex}[2.0ex][0ex]{$\vdots$}}
	\noLine
	\UnaryInfC{$\textsc{Plus}(q,X,A \oplus B),\vec v_{\Gamma}$}
	\UnaryInfC{$q,X,\vec v_{\Gamma,A \oplus B}$}
\end{prooftree}
It is impossible that the configuration $\textsc{Plus}(q,X,A \oplus B),\vec v_{\Gamma}$ results from an application of a loss rule since the deduction tree is regular.
Inspecting the construction of $\mathcal{A}^E_F$, we see that the only possible outgoing transition of $\textsc{Plus}(q,X,A \oplus B)$ is the following fork rule: 
$$
\textsc{Plus}(q,X,A \oplus B) \rightarrow \textsc{Plus}_1(q,X,A \oplus B)\land \textsc{Plus}_2(q,X,A \oplus B)
$$
where $\textsc{Plus}_1(q,X,A \oplus B)$ and $\textsc{Plus}_2(q,X,A \oplus B)$ are the remaining two intermediate states in $(\oplus\mathsf{L})$.
Hence the regular lossy deduction tree must be of the form:
\begin{prooftree}\rootAtTop
	\AxiomC{\raisebox{0ex}[2.0ex][0ex]{$\vdots$}}
	\noLine
	\UnaryInfC{$\textsc{Plus}_1(q,X,A \oplus B),\vec v_{\Gamma}$}
	\AxiomC{\raisebox{0ex}[2.0ex][0ex]{$\vdots$}}
	\noLine
	\UnaryInfC{$\textsc{Plus}_2(q,X,A \oplus B),\vec v_{\Gamma}$}
	\BinaryInfC{$\textsc{Plus}(q,X,A \oplus B),\vec v_{\Gamma}$}
	\UnaryInfC{$q,X,\vec v_{\Gamma,A \oplus B}$}
\end{prooftree}
By the same argument as before, it is also impossible that $\textsc{Plus}_1(q,X,A \oplus B),\vec v_{\Gamma}$ and $\textsc{Plus}_2(q,X,A \oplus B),\vec v_{\Gamma}$ are derived by loss rules.
By the construction of $\mathcal{A}^E_F$, the only possible outgoing transition of $\textsc{Plus}_1(q,X,A \oplus B)$ is a unary rule of the form $\textsc{Plus}_1(q,X,A \oplus B) \xrightarrow{\vec e_A} q,X$.
Similarly, the only possible outgoing transition of $\textsc{Plus}_2(q,X,A \oplus B)$ is a unary rule of the form $\textsc{Plus}_2(q,X,A \oplus B) \xrightarrow{\vec e_B} q,X$.
Hence the regular lossy deduction tree is as follows:
\begin{prooftree}\rootAtTop
	\AxiomC{\raisebox{0ex}[2.0ex][0ex]{$\vdots$}}
	\noLine
	\UnaryInfC{$q,X,\vec v_{\Gamma,A}$}
	\UnaryInfC{$\textsc{Plus}_1(q,X,A \oplus B),\vec v_{\Gamma}$}
	\AxiomC{\raisebox{0ex}[2.0ex][0ex]{$\vdots$}}
	\noLine
	\UnaryInfC{$q,X,\vec v_{\Gamma,B}$}
	\UnaryInfC{$\textsc{Plus}_2(q,X,A \oplus B),\vec v_{\Gamma}$}
	\BinaryInfC{$\textsc{Plus}(q,X,A \oplus B),\vec v_{\Gamma}$}
	\UnaryInfC{$q,X,\vec v_{\Gamma,A \oplus B}$}
\end{prooftree}
We consider here the case where $A$ is in $S_{\oc}$ and $B$ is in $S\setminus S_{\oc}$, the other cases being similar.
By the induction hypothesis, there is a standard lossy deduction tree for $\mathcal{A}^E_F,\{q_{
\ell}\} \judge_{\ell} q,X,\vec v_{\Gamma,A}$. 
Such a deduction tree is of the form:
\begin{prooftree}\rootAtTop
	\AxiomC{$\mathcal{T}_1$}
	\noLine
	\UnaryInfC{$q \cup \sigma(\zeta(\Gamma))\cup\{A\},X,\vec v_{\xi(\Gamma)}$}
	\doubleLine\RightLabel{($\mathsf{store}$)}
	\UnaryInfC{$q,X,\vec v_{\Gamma,A}$}
\end{prooftree}
where the subtree $\mathcal{T}_1$ is also standard.
Again, by the induction hypothesis, there is a standard lossy deduction tree for $\mathcal{A}^E_F,\{q_{
\ell}\} \judge_{\ell} q,X,\vec v_{\Gamma,B}$. 
It must be of the form:
\begin{prooftree}\rootAtTop
	\AxiomC{$\mathcal{T}_2$}
	\noLine
	\UnaryInfC{$q \cup \sigma(\zeta(\Gamma)),X,\vec v_{\xi(\Gamma),B}$}
	\doubleLine\RightLabel{($\mathsf{store}$)}
	\UnaryInfC{$q,X,\vec v_{\Gamma,B}$}
\end{prooftree}
where the subtree $\mathcal{T}_2$ is also standard.
We can construct the standard lossy deduction tree for $\mathcal{A}^E_F,\{q_{\ell}\} \judge_{\ell} q,X,\vec v_{\Gamma,A \oplus B}$ as follows:
\begin{prooftree}\rootAtTop
	\AxiomC{$\mathcal{T}_1$}
	\noLine
	\UnaryInfC{$q\cup\sigma(\zeta(\Gamma))\cup\{A\},X,\vec v_{\xi(\Gamma)}$}
	\RightLabel{(\textsf{store})}
	\UnaryInfC{$q\cup\sigma(\zeta(\Gamma)),X,\vec v_{\xi(\Gamma),A}$}
	\UnaryInfC{$\textsc{Plus}_1(q\cup\sigma(\zeta(\Gamma)),X,A \oplus B),\vec v_{\xi(\Gamma)}$}
	\AxiomC{$\mathcal{T}_2$}
	\noLine
	\UnaryInfC{$q\cup\sigma(\zeta(\Gamma)),X,\vec v_{\xi(\Gamma),B}$}
	\UnaryInfC{$\textsc{Plus}_2(q\cup\sigma(\zeta(\Gamma)),X,A \oplus B),\vec v_{\xi(\Gamma)}$}
	\BinaryInfC{$\textsc{Plus}(q\cup\sigma(\zeta(\Gamma)),X,A \oplus B),\vec v_{\xi(\Gamma)}$}
	\UnaryInfC{$q\cup\sigma(\zeta(\Gamma)),X,\vec v_{\xi(\Gamma),A \oplus B}$}
	\doubleLine\RightLabel{(\textsf{store})}
	\UnaryInfC{$q,X,\vec v_{\Gamma,A \oplus B}$}
\end{prooftree}
The remaining cases are left to the reader.
\end{proof}

\iezwtoabvass*
\begin{proof}
The proof is similar to that of \cite[Claims 4.3.1 and 4.3.2]{LS15}.
We show the \emph{only-if} direction by induction on the length of the cut-free proof in $\mathbf{IEZW}$ of $\Theta,\Gamma \vdash \Pi$. 
We only consider the case where $\oc\Theta,\oc\Gamma \vdash \oc A$ is obtained from $\Theta,\Gamma \vdash A$ by an application of ($\oc$F), where $\Theta$ is a finite multiset of formulas from $S_{\oc}$, and $\Gamma$ is a finite multiset of formulas from $S \setminus S_{\oc}$.
By the induction hypothesis, $\mathcal{A}^E_F,\{q_{\ell}\} \judge_{\ell} \sigma(\Theta),A,\vec v_{\Gamma}$. 
By $|\sigma(\Theta)|$ applications of ($\mathsf{store}$), $\mathcal{A}^E_F,\{q_{\ell}\} \judge_{\ell} \emptyset,A,\vec v_{\Gamma}+\sum_{\oc B \in \sigma(\Theta)} \bar{\mathsf{e}}_{\oc B}$. 
By the rules for ($\mathsf{func}$), it holds that $\mathcal{A}^E_F,\{q_{\ell}\} \judge_{\ell} \sigma(\oc\Theta,\oc \Gamma),\oc A,\bar{\mathsf{0}}$.
	
In view of Lemma \ref{standard}, we show the \emph{if} direction by induction on the height of the standard lossy deduction tree for $\mathcal{A}^E_F,\{q_{\ell}\} \judge_{\ell} \sigma(\Theta),\Pi^{\dagger},\bar{\mathsf{v}}_{\Gamma}$.
We only consider the case where the configuration $\sigma(\Theta),\oc A,\vec 0$ is derived by a full zero test rule of the form $\sigma(\Theta),\oc A \fz \ts{OfCourse}(\sigma(\Theta),\oc A)$, where $\ts{OfCourse}(\sigma(\Theta),\oc A)$ denotes the intermediate state in ($\mathsf{func}$).
The following two points are crucial: 
\begin{romanenumerate}
    \item The only possible outgoing transitions of $\ts{OfCourse}(\sigma(\Theta),\oc A)$ are:
    \begin{itemize}
        \item $\ts{OfCourse}(\sigma(\Theta),\oc A) \xrightarrow{\vec e_B} \ts{OfCourse}(\sigma(\Theta),\oc A)$ (for each $\oc B \in \sigma(\Theta)$), and
        \item $\ts{OfCourse}(\sigma(\Theta),\oc A)\xrightarrow{\vec 0}\emptyset,A$.
    \end{itemize}
    \item Since the deduction tree in question is standard, it is impossible that loss rules are applied at $\textsc{OfCourse}(\sigma(\Theta),\oc A)$.
\end{romanenumerate}
The last part of the deduction tree is thus of the form: 
	\begin{prooftree}\rootAtTop
		\AxiomC{\raisebox{0ex}[2.0ex][0ex]{$\vdots$}}
		\noLine
		\UnaryInfC{$\emptyset,A,\vec v_{\Delta}$}
		\UnaryInfC{$\textsc{OfCourse}(\sigma(\Theta),\oc A),\vec v_{\Delta}$}
		\dashedLine
		\UnaryInfC{$\textsc{OfCourse}(\sigma(\Theta),\oc A),\vec 0$}
		\UnaryInfC{$\sigma(\Theta),\oc A,\vec 0$}
	\end{prooftree}
where the dashed line denotes several (possibly zero) applications of the unary rules of the form $\ts{OfCourse}(\sigma(\Theta),\oc A) \xrightarrow{\vec e_B} \ts{OfCourse}(\sigma(\Theta),\oc A)$ where $\oc B$ in $\sigma(\Theta)$.
Therefore $\Delta$ is a (possibly empty) multiset over $S$ where $\oc B$ is in $\sigma(\Theta)$ for every formula $B$ in $\sigma(\Delta)$. 
Again, since the deduction tree is standard, store rules must be successively applied as many times as possible right below the application of $\ts{OfCourse}(\sigma(\Theta),\oc A)\xrightarrow{\vec 0}\emptyset,A$.
Hence the last part of the deduction tree is actually of the form:
\begin{prooftree}\rootAtTop
	\AxiomC{\raisebox{0ex}[2.0ex][0ex]{$\vdots$}}
	\noLine
	\UnaryInfC{$\sigma(\zeta(\Delta)),A,\vec v_{\xi(\Delta)}$}
	\doubleLine
	\RightLabel{($\mathsf{store}$)}
	\UnaryInfC{$\emptyset,A,\vec v_{\Delta}$}
	\UnaryInfC{$\textsc{OfCourse}(\sigma(\Theta),\oc A),\vec v_{\Delta}$}
	\dashedLine
	\UnaryInfC{$\textsc{OfCourse}(\sigma(\Theta),\oc A),\vec 0$}
	\UnaryInfC{$\sigma(\Theta),\oc A,\vec 0$}
\end{prooftree}
By the induction hypothesis, $\zeta(\Delta),\xi(\Delta) \vdash A$, i.e., $\Delta \vdash A$, is provable in $\mathbf{IEZW}$.
By the rule of ($\oc$F), $\oc\Delta \vdash \oc A$ is provable in $\mathbf{IEZW}$. 
Due to the fact that $\sigma(\oc \Delta) \subseteq \sigma(\Theta)$, several applications of structural rules yield a proof of $\Theta \vdash \oc A$ in $\mathbf{IEZW}$, as desired.
For the other cases, see \cite[Claims~4.3.1]{LS15}.
\end{proof}

Similarly, we show:

\encodeILZWprime*
\begin{proof}
	Observe that the standardization lemma (cf.\ Lemma~\ref{standard}) holds also for $\mathcal{A}^{I'}_F$.
	The \emph{if} direction is thus shown by induction on the height of the $\{(q_{\ell},\vec 0)\}$-leaf-covering standard lossy deduction tree with root $\sigma(\Theta),\Pi^{\dagger},\vec v_{\Gamma}$ in $\mathcal{A}^{I'}_F$. 
	The proof of the \emph{only-if} direction proceeds by induction on the height of the cut-free proofs in $\mathbf{ILZW}'$. 
	We analyze here the cases concerning the right-weakening rule and the rules for $(\mathsf{W}')$; see \cite[Claims~4.3.1 and~4.3.2]{LS15} for the other cases. 
	
	If $\Theta,\Gamma \vdash A$ is obtained from $\Theta,\Gamma \vdash$ by an application of (W'), then by the induction hypothesis, $\mathcal{A}^{I'}_F,\{q_{\ell}\} \judge_{\ell} \sigma(\Theta),\bullet,\bar{\mathsf{v}}_{\Gamma}$. 
	Using the unary rule for ($\mathsf{W}'$), we obtain the desired deduction tree for $\mathcal{A}^{I'}_F,\{q_{\ell}\} \judge_{\ell} \sigma(\Theta),A,\bar{\mathsf{v}}_{\Gamma}$. 
	
	Conversely, if the standard lossy deduction tree for $\mathcal{A}^{I'}_F,\{q_{\ell}\} \judge_{\ell} \sigma(\Theta),A,\bar{\mathsf{v}}_{\Gamma}$ is obtained from the standard lossy deduction tree for $\mathcal{A}^{I'}_F,\{q_{\ell}\} \judge_{\ell} \sigma(\Theta),\bullet,\bar{\mathsf{v}}_{\Gamma}$ by an application of ($\mathsf{W}'$), then by the induction hypothesis, $\Theta,\Gamma \vdash$ is provable in $\mathbf{ILZW}'$. 
	By applying (W'), $\Theta,\Gamma \vdash A$ is provable in $\mathbf{ILZW}'$. 
\end{proof}

\section{Proof of Lemma~\ref{prenextrans2}}
\label{appenproofhard}

\prenextransii*
\begin{proof}
	For starters, observe that the following claim holds:
	\begin{description}
		\item[(a)] Let $\oc \Xi,\Sigma \vdash C$ be a $\oc$-prenex $\{\multimap\}$-sequent. If $\oc \Xi,\Sigma \vdash C$ is provable in the $\{\multimap,\oc\}$-fragment of $\mathbf{ILLW}$, then $\Xi^n,\Sigma \vdash C$ is provable in $\mathbf{BCK}$ for some $n$.
	\end{description}
    Similarly to Lemma~\ref{ilzwtoflei}, this is shown by induction on the size of cut-free proofs in the $\{\multimap,\oc\}$-fragment of $\mathbf{ILLW}$.
    To show that Statement (1) implies Statement (2), suppose that $\oc \Gamma,\Delta \vdash A$ is provable in the $\{\multimap,\oc\}$-fragment of $\mathbf{ILLW}$. 
    By Claim (a), $\Gamma^n,\Delta \vdash A$ is provable in $\mathbf{BCK}$ for some $n$.
    For each formula $B$ in $\Gamma$, $B$ is also in $\sigma(\Gamma)$. 
    Hence we obtain a proof of $\Delta \vdash A$ in $\mathbf{BCK}[\sigma(\Gamma)]$, applying (Cut) several times. 
	The implications $(2 \Rightarrow 3)$, $(3\Rightarrow 4)$ and $(4\Rightarrow 5)$ trivially hold. 
	
	We now embark on the proof of the remaining implication $(5 \Rightarrow 1)$.
	By straightforward induction on the length of proofs, we can show:
	\begin{description}
		\item[(b)] Let $\Xi$ be a finite multiset of formulas in $\mathbf{FL}_{\mathbf{ew}}$ and $\Sigma \vdash \Pi$ a sequent of $\mathbf{FL}_{\mathbf{ew}}$. If $\Sigma \vdash \Pi$ is provable in $\mathbf{FL}_{\mathbf{ew}}[\sigma(\Xi)]$, then $\oc \Xi,\Sigma \vdash \Pi$ is provable in $\mathbf{ILZW}'$.
	\end{description}
    Let us assume that $\Delta \vdash A$ is provable in $\mathbf{FL}_{\mathbf{ew}}[\sigma(\Gamma)]$. 
    By Claim (b), $\oc\Gamma,\Delta \vdash A$ is provable in $\mathbf{ILZW}'$. 
    By the cut elimination theorem for $\mathbf{ILZW}'$, we can easily check that $\mathbf{ILZW}'$ is conservative over the $\{\multimap,\oc\}$-fragment of $\mathbf{ILLW}$. 
    Hence $\oc \Gamma,\Delta \vdash A$ is provable in the $\{\multimap,\oc\}$-fragment of $\mathbf{ILLW}$.
\end{proof}
\end{document}